\let\origcitation\citation
 \def\citation#1{\g@addto@macro\mycites{#1^^J}\origcitation{#1}}}
\write\citeout{\mycites}\immediate\closeout\citeout}
\newcommand{\mfM}{\mathfrak{M}}
\renewcommand{\k}{\boldsymbol{k}}
\newcommand{\fm}{\mathfrak{m}}
\newcommand{\fn}{\mathfrak{n}}
\newcommand{\C}{{\mathbb C}}
\newcommand{\D}{{\mathbb D}}
\newcommand{\N}{{\mathbb N}}
\newcommand{\Q}{{\mathbb Q}}
\newcommand{\R}{{\mathbb R}}
\newcommand{\T}{{\mathbb T}}
\newcommand{\Z}{{\mathbb Z}}
\newcommand{\cC}{{\mathcal C}}
\newcommand{\cF}{{\mathcal F}}
\newcommand{\cL}{{\mathcal L}}
\newcommand{\inv}{^{-1}}
\newcommand{\df}{^{\mathrm{df}}}
\newcommand{\an}{_\mathrm{an}}
\newcommand{\anexp}{_\mathrm{an,exp}}
\newcommand{\No}{\mathbf{No}}
\newcommand{\On}{\mathbf{On}}
\newcommand{\Oz}{\mathbf{Oz}}
\newcommand{\PI}{\operatorname{PI}}
\newcommand{\rc}{\operatorname{rc}}
\newcommand{\pr}{\operatorname{pr}}
\newcommand{\supp}{\operatorname{supp}}
\newcommand{\uf}{_\mathcal{F}}
\newcommand{\ufexp}{_{\mathcal{F},\operatorname{exp}}}
\renewcommand{\leq}{\leqslant}
\renewcommand{\geq}{\geqslant}
\renewcommand{\preceq}{\preccurlyeq}
\renewcommand{\succeq}{\succcurlyeq}
\renewcommand{\epsilon}{\varepsilon}
\newtheorem{corollary}{Corollary}
\newtheorem{fact}{Fact}
\newtheorem{proposition}{Proposition}
\newtheorem{theorem}{Theorem}
\newtheorem*{theorem*}{Theorem}
\newtheorem{lemma}{Lemma}
\newtheorem*{open question*}{Open Question}
\numberwithin{question}{section}
\numberwithin{conjecture}{section}
\numberwithin{corollary}{section}
\numberwithin{fact}{section}
\numberwithin{proposition}{section}
\numberwithin{theorem}{section}
\numberwithin{lemma}{section}
\newtheorem*{notational conventions*}{Notational Conventions}
\theoremstyle{definition}
\newtheorem{definition}{Definition}
\numberwithin{definition}{section}
\newtheorem*{example*}{Example}
\newtheorem{example}{Example}
\numberwithin{example}{section}
\newtheorem{remark}{Remark}
\newtheorem*{remark*}{Remark}
\numberwithin{remark}{section}
\newcommand{\thistheoremname}{}
\newtheorem*{genericthm*}{\thistheoremname}
\newenvironment{namedthm*}[1]
{\renewcommand{\thistheoremname}{#1}%
\begin{genericthm*}}
{\end{genericthm*}}
\title[Surreal Ordered Exponential Fields]{Surreal Ordered Exponential Fields}
\keywords{surreal numbers, ordered exponential fields, real closed exponential fields, transseries}
\subjclass{Primary 06A05, 03C64; Secondary 12J15, 06F20, 06F25}
\author{Philip Ehrlich}
\address{Department of Philosophy\\
Ohio University\\
Athens, OH 45701, USA}
\email{ehrlich@ohio.edu}
\author{Elliot Kaplan}
\address{Department of Mathematics\\
University of Illinois at Urbana-Champaign\\
Urbana, IL 61801, USA}
\email{eakapla2@illinois.edu}
\begin{document}

\begin{abstract} 
In~\cite{EH5}, the algebraico-tree-theoretic simplicity hierarchical structure of J. H. Conway's ordered field $\No$ of surreal numbers was brought to the fore and employed to provide necessary and sufficient conditions for an ordered field (ordered $K$-vector space) to be isomorphic to an initial subfield ($K$-subspace) of $\No$, i.e. a subfield ($K$-subspace) of $\No$ that is an initial subtree of $\No$. In this sequel to~\cite{EH5}, piggybacking on the just-said results, analogous results are established for \emph{ordered exponential fields}, making use of a slight generalization of Schmeling's conception of a \emph{transseries field}. It is further shown that a wide range of ordered exponential fields are isomorphic to initial exponential subfields of $(\No, \exp)$. These include all models of $T(\R_W, e^x)$, where $\R_W$ is the reals expanded by a \emph{convergent Weierstrass system} $W$. Of these, those we call \emph{trigonometric-exponential fields} are given particular attention. It is shown that the exponential functions on the initial trigonometric-exponential subfields of $\No$, which includes $\No$ itself, extend to \emph{canonical} exponential functions on their \emph{surcomplex} counterparts. The image of the canonical map of the ordered exponential field $\T^{LE}$ of \emph{logarithmic-exponential} transseries into $\No$ is shown to be initial, as are the ordered exponential fields $\R((\omega))^{EL}$ and $\R\langle\langle\omega\rangle \rangle$.
\end{abstract} 

\maketitle
\setcounter{tocdepth}{1}
\tableofcontents

\section{Introduction} \label{sec:In}
In his monograph \emph{On Numbers and Games}~\cite{CO}, J. H. Conway introduced a real closed field $\No$ of \emph{surreal numbers} containing the reals and the ordinals as well as a great many less familiar numbers, including $-\omega$, $\omega/2$, $1/\omega$, and $\sqrt{\omega}$, to name only a few. Indeed, $\No$ is so remarkably inclusive that, subject to the proviso that numbers---construed here as members of ordered fields---be individually definable in terms of sets of NBG (von Neumann-Bernays-G\"odel set theory with Global Choice), it may be said to contain ``All Numbers Great and Small''~\cite{EH1, EH2, EH5, EH8}.

$\No$ also has a rich algebraico-tree-theoretic structure which was brought to the fore by Ehrlich~\cite{EH4, EH5} and further developed and explored in~\cite{vdDE, EH6, EH7, EH8, EK, F1, F2}. This \emph{simplicity hierarchical} (or \emph{$s$-hierarchical}) \emph{structure} depends upon $\No$'s structure as a lexicographically ordered full binary tree and arises from the fact that the sums and products of any two members of the tree are the simplest possible elements of the tree consistent with $\No$'s structure as an ordered group and an ordered field, respectively, it being understood that $x$ is \emph{simpler than} $y$ just in case $x$ is a predecessor of $y$ in the tree. 

Among the remarkable $s$-hierarchical features of $\No$ is that much as the surreal numbers emerge from the empty set of surreal numbers by means of a transfinite recursion that provides an unfolding of the entire spectrum of numbers great and small (modulo the aforementioned provisos), the recursive process of defining $\No$'s arithmetic in turn provides an unfolding of the entire spectrum of ordered fields (ordered $K$-vector spaces; ordered abelian groups) in such a way that an isomorphic copy of every such system either emerges as an initial substructure of $\No$--a substructure of $\No$ that is an initial subtree of $\No$--or is contained in a theoretically distinguished instance of such a system that does. More specifically, in~\cite{EH5} Ehrlich showed that: 

\begin{proposition}
\label{Prop. 2}
Every ordered vector space over an Archimedean ordered field is isomorphic to an initial subspace of $\No$; in particular, every divisible ordered abelian group is isomorphic to an initial subgroup of $\No$. 
\end{proposition}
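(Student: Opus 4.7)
The plan is to construct the embedding by transfinite recursion, using the surreal \emph{simplest element} operation: given a set-sized pair of classes $(L,R)$ of surreals with $L<R$, there is a unique simplest $\{L\mid R\}_{\No}\in\No$ satisfying $L<\{L\mid R\}_{\No}<R$.

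Since $K$ is Archimedean it embeds into $\R$, and $\R$ is an initial subfield of $\No$. The first preliminary step is to invoke the corresponding prior result from \cite{EH5} to fix an isomorphism $\iota:K\hookrightarrow\No$ onto an initial subfield of $\No$. Next, using Global Choice, pick a Hamel $K$-basis $B$ of $V$ containing $1$ and well-order it as $(b_\alpha)_{\alpha<\lambda}$ with $b_0=1$. For each $\alpha\leq\lambda$ let $V_\alpha$ be the $K$-span of $\{b_\beta:\beta<\alpha\}$, so $V_0=\{0\}$, $V_1=K$, and $V_\lambda=V$. By recursion on $\alpha$ I would build order and $K$-linear embeddings $\Phi_\alpha:V_\alpha\to\No$ extending $\iota$, maintaining the invariant that $\Phi_\alpha(V_\alpha)$ is an initial $K$-subspace of $\No$. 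At a successor stage, define
\[
\Phi_{\alpha+1}(b_\alpha) := \bigl\{\Phi_\alpha(v):v\in V_\alpha,\ v<b_\alpha \bigm| \Phi_\alpha(v):v\in V_\alpha,\ v>b_\alpha\bigr\}_{\No},
\]
and extend $K$-linearly to $V_{\alpha+1}=V_\alpha\oplus Kb_\alpha$; take unions at limit stages, and set $\Phi:=\Phi_\lambda$.

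Two things require verification at each successor step: the $K$-linear extension is strictly order preserving, and the new image is still initial. The first is mostly bookkeeping: by the inductive hypothesis $\Phi_\alpha$ is an order isomorphism, so $b_\alpha$ and $\Phi_{\alpha+1}(b_\alpha)$ realize the same cut in $V_\alpha$ and $\Phi_\alpha(V_\alpha)$ respectively; the sign of a general element $c_0+c_1b_\alpha$ with $c_1\ne 0$ is determined by the position of $-c_0/c_1\in V_\alpha$ relative to $b_\alpha$ in that cut, and this information is preserved by $\Phi_{\alpha+1}$. I expect the main obstacle to be the second point: showing that $\Phi_{\alpha+1}(V_{\alpha+1})$ remains initial. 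The key lemma is that if $W\subseteq\No$ is an initial $K$-subspace and $x=\{L\mid R\}_{\No}$ is the simplest surreal in the cut determined by $W$ at some target position, then $W+Kx$ is again initial in $\No$. This is proved by combining Conway's genetic definitions of sum and of multiplication by elements of $\iota(K)$, which themselves express $c_0+c_1 x$ as the simplest surreal in an explicit cut, with the closure of $W$ under tree-predecessors.

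For the ``in particular'' clause, observe that every divisible ordered abelian group is canonically an ordered $\Q$-vector space and $\Q\subseteq\R$ is Archimedean; applying the first clause with $K=\Q$ gives the result.
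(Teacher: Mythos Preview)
Your approach matches the one from \cite{EH5} that the paper cites without proof; indeed, the paper explicitly records your key inductive lemma---adding to an initial $\boldsymbol{k}$-subspace the simplest surreal realizing a cut over it yields another initial $\boldsymbol{k}$-subspace---as Lemma~\ref{newbasiselem}. One minor slip: an abstract ordered $K$-vector space $V$ need not contain $K$ or a distinguished element $1$, so drop the stipulations $b_0=1$, $V_1=K$, and ``extending $\iota$'', and simply start from $V_0=\{0\}$ with $K$ acting on $\No$ through $\iota$.
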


\begin{proposition}
\label{Prop. 1}
Every real closed ordered field is isomorphic to an initial subfield of $\No$. 
\end{proposition}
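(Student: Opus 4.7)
The plan is to reduce to Proposition~\ref{Prop. 2} by combining the Hahn-series representation of real closed fields with the Conway normal form structure of $\No$. Recall that via Conway's $\omega$-map every surreal has a unique normal form $\sum_{i<\alpha} r_i\,\omega^{a_i}$ with $r_i\in\R^{\times}$ and strictly decreasing exponents $a_i\in\No$; this identifies $\No$ with the Hahn-series field $\R((\omega^{\No}))$, whose value group is (an isomorphic copy of) $\No$ itself, taken additively via the $\omega$-map.

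Given a real closed ordered field $K$ with value group $G$ (necessarily divisible, since $K$ is real closed) and residue field $k \subseteq \R$, I would invoke the Mourgues--Ressayre theorem to obtain a truncation-closed ordered-field embedding $j: K \hookrightarrow \R((t^G))$. Proposition~\ref{Prop. 2} then supplies an initial ordered-group embedding $\iota: G \hookrightarrow \No$. Composing $j$ with the Hahn-series map $\R((t^G)) \to \R((\omega^{\No})) \cong \No$ induced by $t^g \mapsto \omega^{\iota(g)}$ yields an ordered-field embedding $\phi: K \hookrightarrow \No$.

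It then remains to show that $\phi(K)$ is an \emph{initial} subfield of $\No$. The principal obstacle is a characterization of initial subfields of $\No$ in terms of Conway normal forms: roughly, that a subfield $A \subseteq \No$ is initial precisely when $A$ is closed under truncation of normal forms and the set of exponents appearing in normal forms of elements of $A$ is itself an initial subgroup of $\No$. Granting such a characterization, initiality of $\phi(K)$ follows from the truncation-closedness of $j$ combined with the initiality of $\iota(G)$ given by Proposition~\ref{Prop. 2}.

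I expect the bulk of the technical work to lie in establishing this normal-form characterization, since it requires reconciling the tree-theoretic simplicity order on $\No$ with the valuation-theoretic notion of truncation in Hahn series; in particular, one must verify that truncating a normal form never increases birthday, and that the simplest surreal realizing a cut determined by truncation data coincides with the surreal obtained by the Hahn-series machinery. Once this bridge between simplicity and truncation is in place, the proof reduces to the routine verification that the image is closed under sums, products, and inverses, which is inherited from the corresponding closures of $K$ inside $\R((t^G))$.
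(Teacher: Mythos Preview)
Your approach is essentially the one the paper indicates: Proposition~\ref{Prop. 1} is cited from~\cite{EH5}, and the paper explains that it is obtained via Proposition~\ref{IF} (together with Proposition~\ref{Ehrlich initial 2}), which is precisely the ``normal-form characterization of initiality'' you anticipate. The Mourgues--Ressayre embedding supplies the truncation closed, cross sectional Hahn-field realization, Proposition~\ref{Prop. 2} initially embeds the divisible value group, and Proposition~\ref{Ehrlich initial 2} then guarantees that the induced map $\sum r_\alpha t^{\gamma_\alpha}\mapsto \sum r_\alpha\omega^{\imath(\gamma_\alpha)}$ is initial---so the ``bridge between simplicity and truncation'' you flag as the main obstacle is exactly the content of Propositions~\ref{IF} and~\ref{Ehrlich initial 2}, already established in~\cite{EH5}.

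One small sharpening: your proposed characterization (``truncation closed with initial exponent set'') should include the cross-sectional condition $\omega^\gamma\in A$ for every exponent $\gamma$ occurring; for a truncation closed sub\emph{field} containing its coefficient field this follows automatically by dividing leading terms by their coefficients, but it is worth stating since it is needed in Proposition~\ref{IF}.
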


These results were obtained with the aid of the following more general results from~\cite{EH5} that provide necessary and sufficient conditions for an ordered $K$-vector space (ordered field) to be isomorphic to an initial $K$-subspace (subfield) of $\No$.

\begin{proposition}
\label{KV}
An ordered $K$-vector space is isomorphic to an initial subspace of $\No$ if and only if $K$ is isomorphic to an initial subfield of $\No$.
\end{proposition}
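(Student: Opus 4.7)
The plan is to handle the two directions separately: sufficiency of $K$ being initial by a transfinite construction, and necessity by a reduction to the corresponding statement for ordered abelian groups.

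\emph{Sufficient direction.} Assume $K$ is isomorphic to an initial subfield $K_0 \subseteq \No$, and let $V$ be an ordered $K$-vector space. I would realize $V$ in $\No$ by transfinite recursion on a well-ordering of $V$: at each stage, send the next element $v$ to the simplest surreal number realizing, relative to the partial embedding already constructed, the cut that $v$ determines. Such a simplest witness always exists because $\No$ is a full binary tree; the final image is automatically closed under passage to tree-predecessors, i.e.\ initial, since at each step we chose the simplest possible image. The routine verifications that the resulting map is $K_0$-linear and order-preserving follow from the uniqueness of the simplest element in any cut. Alternatively, one may first embed $V$ into a Hahn space $K((G))$ (for $G$ its group of Archimedean classes), realize $G$ initially via Proposition \ref{Prop. 2}, and then adapt the Hahn-series construction of \cite{EH5} from the field setting to the $K_0$-vector space setting.

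\emph{Necessary direction.} Conversely, suppose $V$ is isomorphic to an initial subspace $W \subseteq \No$, so the $K$-action on $W$ is realized via some subfield $K_0 \subseteq \No$; I must show $K_0$ is initial. Fix a positive $w_0 \in W$ that is as simple as possible in its $\No$-Archimedean class --- ideally a pure monomial $\omega^a$ appearing in the Conway normal form of some element of $W$. Then $K_0 \cdot w_0$ is an additive subgroup of $W$ isomorphic to $(K_0,+,\leq)$; moreover, one checks that it coincides with the set of elements of $W$ lying in the Archimedean class of $w_0$, together with $0$. Since this description is an internal condition preserved under tree-predecessors, and $W$ is initial in $\No$, the subgroup $K_0 \cdot w_0$ is itself initial in $\No$. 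From here one transfers initiality to $K_0$ via multiplication by $w_0^{-1}$ and invokes the characterization from \cite{EH5} that a subfield of $\No$ is initial precisely when its additive reduct is.

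The principal obstacle is this final transfer. Multiplication by $w_0^{-1}$ is not in general a tree automorphism of $\No$, so the initiality of $K_0 \cdot w_0$ does not \emph{a priori} pass to initiality of $K_0$. The plan is to exploit the fact that $w_0$ was chosen to be a pure monomial $\omega^a$, for which multiplication preserves tree structure enough to carry the initiality of $K_0 \cdot \omega^a$ onto that of $K_0$. Handling this step correctly --- rather than via a naive rescaling --- is where the simplicity-hierarchical description of initial subfields developed in \cite{EH5} is indispensable.
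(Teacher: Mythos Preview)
The paper does not prove this proposition: it is quoted from~\cite{EH5} as background, so there is no in-paper proof to compare against. I can only assess your argument on its own merits and against what the paper does record about the proof in~\cite{EH5}.

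Your sufficient direction, first approach, has a genuine gap. Sending each $v$ to the simplest surreal in the appropriate cut does produce an order-preserving injection, but the assertion that $K_0$-linearity is a ``routine verification'' from uniqueness of simplest elements is not correct. That the simplest filler of the cut determined by $v+w$ equals the sum of the simplest fillers for $v$ and $w$, and similarly for scalar multiples by elements of $K_0$, is precisely the content of the $s$-hierarchical compatibility of $+$ and $\cdot$ on $\No$; it is the main point, not a side check. Your alternative Hahn-space approach is closer to what~\cite{EH5} actually does, and the paper records the relevant induction step as Lemma~\ref{newbasiselem}: adjoining to an initial $\boldsymbol{k}$-subspace the simplest element realizing a cut yields an initial $\boldsymbol{k}$-subspace. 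Note, however, that this lemma is stated only for Archimedean $\boldsymbol{k}$; the general case requires the full truncation-closed, cross-sectional machinery.

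Your necessary direction has a more serious error. You claim that $K_0\cdot w_0$ coincides with the set of elements of $W$ in the Archimedean class of $w_0$ (together with $0$). This is false whenever $K$ is non-Archimedean: if $\lambda\in K_0$ is infinite, then $\lambda w_0\not\asymp w_0$, so $K_0\cdot w_0$ meets many Archimedean classes. Consequently the ``internal condition preserved under tree-predecessors'' description fails, and with it the argument that $K_0\cdot w_0$ is initial. You correctly flag the final rescaling step as delicate, but the argument has already broken down before that point. The actual route in~\cite{EH5} does not attempt to recover $K$ from a single orbit inside $W$; it goes through the characterization of initial subfields recorded here as Proposition~\ref{IF}.
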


\begin{proposition}\label{IF}
An ordered field $K$ is isomorphic to an initial subfield of $\No$ if and only if $K$ is isomorphic to a truncation closed, cross sectional subfield of a power series field $\R((t^\Gamma))_{\On}$ where $\Gamma$ is isomorphic to an initial subgroup of $\No$.
\end{proposition}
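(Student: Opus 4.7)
The plan is to establish the biconditional by exploiting the canonical realization of $\No$ as a (class-sized) Hahn series field over $\R$ with value group $\No$, via Conway's $\omega$-map $\gamma \mapsto \omega^\gamma$. The pivotal technical input, which I would isolate as a preliminary lemma, is that if $x \in \No$ has Conway normal form $\sum_\gamma r_\gamma \omega^\gamma$, then every truncation $\sum_{\gamma > \beta} r_\gamma \omega^\gamma$ is a tree-predecessor of $x$, and moreover every tree-predecessor of $x$ arises by truncating at some $\beta$ and possibly replacing $r_\beta$ by a real number simpler than itself (or by $0$). With this structural description of initial segments in Conway normal form in hand, both implications reduce to bookkeeping of supports and coefficients.

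For the forward implication, suppose $K$ is an initial subfield of $\No$. Let $\Gamma := \{\gamma \in \No : \omega^\gamma \in K\}$. Since $\omega^\gamma$ is the simplest positive representative of its Archimedean class and $K$ is closed under multiplication, inverses, and truncation of Conway normal forms (the latter by initiality together with the predecessor lemma), one checks that $\Gamma$ is closed under the group operation of $\No$ and is itself initial in $\No$, and that $\gamma \mapsto \omega^\gamma$ yields a cross section $\Gamma \hookrightarrow K^\times$ of the natural valuation. The Conway normal form then embeds $K$ into $\R((t^\Gamma))_{\On}$ by $\sum r_\gamma \omega^\gamma \mapsto \sum r_\gamma t^\gamma$, and the predecessor lemma forces the image to be truncation closed.

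For the backward implication, let $\Gamma$ be initial in $\No$ and let $K$ be a truncation closed, cross sectional subfield of $\R((t^\Gamma))_{\On}$. Define $\phi : K \to \No$ on Hahn series by $\phi\bigl(\sum r_\gamma t^\gamma\bigr) = \sum r_\gamma \omega^\gamma$. That $\phi$ is a field embedding is inherited from the agreement of Hahn series arithmetic over $\R$ with well-ordered support and Conway's recursive arithmetic on normal forms. To see that $\phi(K)$ is initial, take $x \in \phi(K)$ and any tree-predecessor $y$ of $x$. By the preliminary lemma, $y$ is obtained from the Conway normal form of $x$ by truncating at some $\beta \in \Gamma$ and possibly modifying the $\beta$-coefficient by a simpler real. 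Truncation keeps us inside $\phi(K)$ by hypothesis; the coefficient modification keeps us inside because $\phi(K)$ contains $\R$ and, via the cross section, contains $\omega^\beta$.

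The main obstacle is the preliminary lemma on predecessors of Conway normal forms: one must extract from the cut definitions of $\omega^\gamma$, sums, and scalar products a clean, monotone description of which partial normal-form expressions are tree-predecessors of a given surreal, and then reconcile this description with the cross sectional and truncation-closure hypotheses so that the two operations (``truncate'' and ``simplify the leading dropped coefficient'') generate all predecessors and preserve the image of $\phi$. Once that lemma is secured, the remaining verifications—that $\Gamma$ is a subgroup and initial, that $\phi$ respects the Hahn series operations, and that everything matches on both sides of the correspondence—are routine.
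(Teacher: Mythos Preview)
Your strategy is the one taken in the cited source \cite{EH5} (the present paper quotes Proposition~\ref{IF} from there without reproving it; the backward direction is restated as Proposition~\ref{Ehrlich initial 2}). However, your preliminary lemma---which you correctly identify as the crux---is false as stated. You assert that every tree-predecessor of $x=\sum_\gamma r_\gamma\omega^\gamma$ arises by truncating and then possibly replacing the coefficient $r_\beta$ by a simpler real. Take $x=\omega^{1/2}$: its normal form is the single term $1\cdot\omega^{1/2}$, yet $\omega=1\cdot\omega^{1}$ is a tree-predecessor (the sign sequence of $\omega^{1/2}$ begins with the $\omega$ pluses that constitute the sign sequence of $\omega$), and so are $\omega/2,\omega/4,\ldots$. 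Here the \emph{exponent} has been replaced by a $<_s$-simpler one, not the coefficient; your lemma would predict $0$ as the only proper predecessor. The correct description of predecessors in Conway normal form (see \cite[\S5]{EH5}) must, after truncation, allow the next exponent to be replaced by a $<_s$-simpler exponent together with a suitable coefficient. This missing clause is exactly where the hypothesis that $\Gamma$ is initial does its real work in the backward direction (a simpler exponent $\delta<_s\gamma_\sigma$ lies in $\Gamma$, hence $\omega^\delta\in\phi(K)$ by cross-sectionality), and it is also what makes the forward direction go: to show $\Gamma$ is initial you need that $\delta<_s\gamma$ implies $\omega^\delta<_s\omega^\gamma$, i.e.\ that $\gamma\mapsto\omega^\gamma$ is a tree isomorphism onto $Lead_{\No}$ (cf.\ Lemma~\ref{lemma 1}).

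A smaller point: you write that ``$\phi(K)$ contains $\R$,'' but a truncation closed, cross sectional subfield of $\R((t^\Gamma))_{\On}$ need only contain its coefficient field $\R_K$, which can be a proper subfield of $\R$. This does not ultimately break the argument---the simpler coefficients that appear turn out to be dyadic, hence in $\Q\subseteq\R_K$---but the claim as written is unjustified. Once the predecessor lemma is correctly formulated, the remainder of your outline goes through and coincides with the original proof.
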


In~\cite{vdDE}, van den Dries and Ehrlich subsequently established:

\begin{proposition} 
\label{DE2}
The exponential field $(\No, \exp)$ of surreal numbers is
an elementary extension of the exponential field $(\R, e^{x})$ of real numbers, where $\exp$ is the recursively defined exponential function on $\No$ developed by Kruskal~\cite[page 38]{CO} and Gonshor~\cite[Ch. 10]{GO}. 
\end{proposition}

This result is obtained as a corollary of:

\begin{proposition} 
\label{DE1}
The field of surreal numbers equipped with 
restricted analytic functions (defined via Taylor series expansion) and with $\exp$ is an elementary
extension of the field of real numbers with restricted analytic
functions and real exponentiation. 
\end{proposition}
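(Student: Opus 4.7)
The plan is to treat this as a consequence of a Ressayre-style characterization of the models of $\Th(\R\anexp)$. First I would exploit Conway's normal form to identify $\No$ with a proper-class-sized Hahn series field $\R((\omega^{\No}))$, whose elements are transfinite sums $\sum_\alpha r_\alpha \omega^{a_\alpha}$ with $(a_\alpha)$ strictly decreasing in $\No$ and $r_\alpha\in\R$. Restricted analytic functions are then interpreted in the standard way: each convergent real power series of radius greater than $1$ acts on the valuation ring of $\No$ via termwise Taylor expansion and is extended by zero outside $[-1,1]$. This makes $\No$ into an $\R\an$-structure containing $\R\an$ as a substructure in the natural way.

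Next I would verify that the Kruskal--Gonshor exponential on $\No$ fits this framework by checking: (i) $\exp$ restricted to the maximal ideal agrees with the restricted-analytic interpretation of $e^x$, i.e., $\exp(\epsilon)=\sum_{n\geq 0}\epsilon^n/n!$ for infinitesimal $\epsilon$; (ii) $\exp$ is an ordered group isomorphism from $(\No,+)$ onto $(\No^{>0},\,\cdot\,)$ that dominates every positive rational power of its argument; and (iii) for every $x\in\No$ decomposed via the normal form as $x=x_\infty+r+\epsilon$ with $x_\infty$ purely infinite, $r\in\R$, and $\epsilon$ infinitesimal, one has $\exp(x)=g(x_\infty)\cdot e^r\cdot\exp(\epsilon)$, where $g$ is Gonshor's canonical exponential on purely infinite numbers. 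With (i)--(iii) in hand, I would invoke Ressayre's characterization, made explicit in the work of van den Dries--Macintyre--Marker, that any truncation-closed Hahn series field over $\R$ equipped with a compatible restricted analytic structure and an exponential satisfying the analogues of (i)--(iii) is a model of $\Th(\R\anexp)$ with $\R$ elementarily embedded; applied to $\No$, this yields the proposition.

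The main obstacle is step (iii). The Kruskal--Gonshor exponential is defined by a transfinite recursion natural with respect to the simplicity hierarchy rather than the truncation order on normal forms, so one needs a careful induction---following Gonshor's own analysis of $\exp$ on purely infinite numbers---to show that the recursive definition factors through the normal form decomposition in the required multiplicative way. Once this is established, (i) and (ii) follow from routine computations using the functional equation $\exp(x+y)=\exp(x)\exp(y)$ and termwise Taylor expansion, while the model-theoretic framework delivers the elementary-extension conclusion. Any set-theoretic issues arising from the proper-class size of $\No$ can be sidestepped by applying the argument to sufficiently rich set-sized initial subfields of $\No$ obtained via Proposition~\ref{IF} and then taking a direct limit.
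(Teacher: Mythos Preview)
The paper does not actually prove this proposition: it is stated in the introduction as a result established by van den Dries and Ehrlich in~\cite{vdDE} and is used throughout as a black box (see also Fact~\ref{Noexpansion}). So there is no ``paper's own proof'' to compare against.

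That said, your outline is essentially the argument of~\cite{vdDE}, and it is sound. The identification of $\No$ with the Hahn field $\R((\omega^{\No}))_{\On}$ via Conway normal form is standard, and the interpretation of restricted analytic functions by Taylor expansion on the valuation ring is exactly how the $\cL\an$-structure on $\No$ is set up. Your conditions (i)--(iii) on the Kruskal--Gonshor exponential are precisely what Gonshor proved; in this paper they appear as Proposition~\ref{Gonshor 1986} (parts (i)--(iii)) together with the multiplicative decomposition $\exp(x)=\exp(x_P)\exp(x_R)\exp(x_I)$ stated immediately after it. So the ``main obstacle'' you flag---that the recursive definition of $\exp$ factors through the normal-form decomposition---is already handled by Gonshor's analysis and need not be redone. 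With those inputs, the model-completeness and quantifier-elimination results of van den Dries--Macintyre--Marker for $T\anexp$ (what you call the Ressayre-style characterization) yield that $(\No,\exp)$ is a model of $T\anexp$ containing $\R\anexp$, hence an elementary extension. Your remark about handling the proper-class issue via a directed union of set-sized initial subfields (the $\No(\alpha)$ for epsilon numbers $\alpha$) is also how~\cite{vdDE} proceeds.

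One small correction of phrasing: the DMM input you need is not quite ``any truncation-closed Hahn series field \ldots\ is a model,'' but rather the model-completeness/QE for $T\anexp$ combined with the fact that the full Hahn field $\R((t^\Gamma))$ with its natural restricted-analytic structure is a model of $T\an$, plus the axiomatization of $T\anexp$ over $T\an$ by the growth and exponential-equation schemata. That is what your (i)--(iii) are verifying.
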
 

Like a recent related work by the current authors~\cite{EK}, this is a sequel to~\cite{EH5}. Following some preliminary material in \S\ref{sec:P1}--\S\ref{sec:P3}, and piggybacking on Propositions~\ref{Prop. 2}--\ref{IF}, necessary and sufficient conditions are established for an \emph{ordered exponential field} to be isomorphic to an initial exponential subfield of $\No$. These conditions make use of the conception of a \emph{transserial Hahn field}, which is an adaptation for sets and proper classes of Schmeling's~\cite{SC} conception of a \emph{transseries field} (see Remark~\ref{rem-T4.2}). It is shown that an ordered exponential field $K$ is isomorphic to an initial exponential subfield of $\No$ if and only if $K$ is isomorphic to a truncation closed, cross sectional exponential subfield of a transserial Hahn field $\R((t^\Gamma))_{\On}$. As a byproduct of this result, we show that every transserial Hahn field admits a truncation closed logarithmic field embedding into $\No$, but that this embedding can not in general be taken to be initial. In light of the growing body of work relating transseries and surreal numbers~\cite{ADH2,Avv,BH,BH2,BHK,BHM,BM,BM2,CE,CEF}, this result on embeddings of transserial Hahn fields is of independent interest.

Using the characterization above, it is further shown that a wide range of ordered exponential fields are isomorphic to initial exponential subfields of $\No$. These include all models of the theory $T (\R\an,e^{x})$ of real numbers with restricted analytic functions and exponentiation~\cite{DMM}, a result previously established by Fornasiero~\cite{F2}, and, more generally, all models of the theory $T(\R_W, e^x)$ of real numbers with a \emph{convergent Weierstrass system} $W$~\cite{DL, vdD} and exponentiation. Of these, those we call \emph{trigonometric-exponential fields} are found to be of particular significance. More specifically, it is shown that the exponential functions on initial trigonometric-exponential subfields of $\No$, which includes $\No$ itself, extend to \emph{canonical} exponential functions on their \emph{surcomplex} counterparts. The proof of this uses the precursory result that trigonometric-exponential initial subfields of $\No$ and \emph{trigonometric ordered initial subfields} of $\No$, more generally, admit \emph{canonical} sine and cosine functions. This is shown to apply to the members of a distinguished family of initial exponential subfields of $\No$ isolated by van den Dries and Ehrlich (\cite{vdDE}, Corollary 5.5), to the image of the canonical map of the ordered exponential field $\T^{LE}$ of \emph{transseries} into $\No$~\cite{Avv}, which is shown to be initial, and to the ordered exponential fields $\R((\omega))^{EL}$ and $\R\langle\langle\omega\rangle \rangle$, considered by Berarducci and Mantova (\cite{BM2}; also see~\cite{K, KS, KT}), which are likewise shown to be initial.

Some of the methods employed in \S\ref{sec:TF}--\S\ref{sec:WS} are adaptations or expansions of methods developed by Ressayre \cite{R}, van den Dries, Macintyre and Marker~\cite{DMM, DMM1}, and D'Aquino, Knight, Kuhlmann and Lange~\cite{D} in their treatments of truncation closed embeddings into Hahn fields of ordered exponential fields and ordered fields with additional structure more generally. However, as is evident from Proposition~\ref{IF}, even in the case of an ordered field $K$ the existence of a truncation closed embedding into a Hahn field does not suffice to establish the existence of an initial embedding into $\No$. This inadequacy is even more pronounced in the case of an initial embedding of an ordered exponential field $K$ into $\No$, since besides the properties required for an initial embedding of $K$ as an ordered field, conditions must be placed on the logarithmic or exponential structure of $K$. As it turns out, the condition T4 from Schmeling's thesis~\cite{SC} on transseries plays an essential role. We thank the anonymous referee for pointing us towards this connection (see Remark~\ref{rem-thanks}), and for their many other helpful suggestions which have greatly improved the paper.

Throughout the paper the underlying set theory is NBG, which is a conservative extension of ZFC in which all proper classes are in bijective correspondence with the class of all ordinals (cf.~\cite{ME}). By ``set-model'' (``class-model'') we mean a model whose universe is a set (a proper class). The theories in the languages treated in \S\ref{sec:WS} and \S\ref{sec:TE} admit quantifier elimination, with the consequence that the results in those sections regarding class-models of such theories and elementary embeddings of models into class-models of such theories are provable in NBG. For this and details on formalizing the theory of surreal numbers in NBG more generally, see~\cite{EH2}.

\section{Preliminaries I: surreal numbers}\label{sec:P1}
By a \textbf{tree} $(A, < _s)$ we mean a partially ordered class such that for each $x \in A$, the class $\{ y \in A:y < _s x\}$ of \textbf{predecessors} of $x$, written `$\pr_A (x)$', is a set well-ordered by $ < _s $. A maximal subclass of $A$ well-ordered by $ < _s $ is called a \textbf{branch} of the tree. Two elements $x$ and $y$ of $A$ are said to be \textbf{incomparable with respect to $<_s$} if $x \ne y$, $x\not < _s y$ and $y\not < _s x$. An \textbf{initial subtree} of $(A, < _s)$ is a subclass $A'$ of $A$ with the induced order such that for each $x \in A'$, $\pr_{A'} (x) = \pr_A (x)$. The \textbf{tree-rank} of $x \in A$, written `$\rho _A (x)$', is the ordinal corresponding to the well-ordered set $(\pr_A (x), <_s)$; the $\alpha $th \textbf{level} of $A$ is $\big\{ x \in A:\rho _A (x) = \alpha \big\}$; and a \textbf{root} of $A$ is a member of the zeroth level. If $x, y \in A$, then $y$ is said to be an \textbf{immediate successor} of $x$ if $x < _s y$ and $\rho _A (y) = \rho _A (x) + 1$; and if $(x_\alpha)_{\alpha < \beta }$ is a chain in $A$ (i.e., a subclass of $A$ totally ordered by $ < _s $), then $y$ is said to be an \textbf{immediate successor of the chain} if $x_\alpha < _s y$ for all $\alpha < \beta$ and $\rho _A (y)$ is the least ordinal greater than the tree-ranks of the members of the chain. The \textbf{length} of a chain $(x_\alpha)_{\alpha < \beta }$ in $A$ is the ordinal $\beta $.

A tree $(A, < _s)$ is said to be \textbf{binary} if each member of $A$ has at most two immediate successors and every chain in $A$ of limit length has at most one immediate successor. If every member of $A$ has two immediate successors and every chain in $A$ of limit length (including the empty chain) has an immediate successor, then the binary tree is said to be \textbf{full}. Since a full binary tree has a level for each ordinal, the universe of a full binary tree is a proper class.

Following~\cite[Definition 1]{EH5}, a binary tree $(A, < _s)$ together with a total ordering $ < $ defined on $A$ will be said to be \textbf{lexicographically ordered} if for all $x, y \in A$, $x$ is incomparable with $y$ with respect to $<_s$ if and only if $x$ and $y$ have a common predecessor lying between them (i.e. there is a $z \in A$ such that $z<_s x$, $z<_s y$ and either $x<z<y$ or $y<z<x$). The appellation ``lexicographically ordered'' is motivated by the fact that: $(A, <, < _s)$ is a lexicographically ordered binary tree if and only if $(A, <, < _s)$ is isomorphic to an initial ordered subtree of the \textbf{lexicographically ordered canonical full binary tree} $(B, < _{lex(B)}, < _B)$, where $B$ is the class of all sequences of $ - $'s and $ + $'s indexed over some ordinal, $x < _B y$ signifies that $x$ is a proper initial subsequence of $y$, and $(x_\alpha)_{\alpha < \mu } < _{lex(B)} (y_\alpha)_{\alpha < \sigma }$ if and only if $x_\beta = y_\beta $ for all $\beta < $ some $\delta $, but $x_\delta < y_\delta$, it being understood that $ - < \textit{undefined} < + $~\cite[Theorem 1]{EH5}. Note that every binary tree admits a lexicographic ordering.

Let $(A, <, < _s)$ be a lexicographically ordered binary tree. If $(L, R)$ is a pair of subclasses of $A$ for which every member of $L$ precedes every member of $R$, then we will write `$L < R$'. Also, if $x$ and $y$ are members of $A$, then `$x < _s y$' will be read ``$x$ \textbf{is simpler than} $y$''; and when $x \in I = \big\{ y \in A:L < \{ y \} < R\big\}$ is such that $x < _s y$ for all $y \in I \setminus \{ x \}$, then we will denote this \textbf{simplest member of $A$ lying between the members of $L$ and the members of $R$} by `$\{ L\, | \, R\}$'. Following Conway's game-theoretic terminology, the members of $L$ and $R$ are called the \textbf{left options} of $x$ and the \textbf{right options} of $x$ respectively. For all $x \in A$, by `$L_{s(x)} $' we mean $\{ a \in A:a < _s x\text{ and }a < x \}$ and by `$R_{s(x)} $' we mean $\{ a \in A:a < _s x\text{ and }x < a \}$. 

The following proposition collects together a number of properties of, or results about, lexicographically ordered binary trees that will be appealed to in subsequent portions of the paper.

\begin{proposition}[\cite{EH5}, Theorem 2; \cite{EH7}, Proposition 2.3]\label{Tree 1}
Let $(A, <, < _s) $ be a lexicographically ordered binary tree.
\begin{enumerate}[(i)]
\item For all $x \in A$, $x = \{ L_{s(x)} \, | \, R_{s(x)} \}$;
\item for all $x, y \in A$, $x < _s y$ if and only if $L_{s(x)} < \{ y \} < R_{s(x)} $ and $y \ne x$; 
\item for all $x \in A$ and all $L, R \subseteq A$ with $L < \{x\} < R$, $x = \{ L \, | \, R \}$ if and only if $L$ is cofinal with $L_{s(x)} $ and $R$ is coinitial with $R_{s(x)}$ if and only if $\{ y \in A: L < \{y\} < R\} \subseteq \{ y \in A: L_{s(x)} < \{y\} < R_{s(x)}\}$\footnote{Due to an inadvertent omission, the necessary assumption that $L < \{x\} < R$ is missing in from~\cite[Proposition 2.3]{EH7} and \cite[Proposition 2.2]{EK}}.
\end{enumerate}
\end{proposition}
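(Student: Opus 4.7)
My plan is to prove (i)--(iii) in order, exploiting the representation theorem cited just before the proposition that identifies $(A,<,<_s)$ with an initial ordered subtree of the canonical full binary tree of sign sequences. In that representation, $y<_s z$ means $y$ is a proper initial subsequence of $z$, and the order of two elements in $<$ is decided at the first position where they disagree.

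For (i), the core observation is that $L_{s(x)}\cup R_{s(x)}=\pr_A(x)$, since $<$ totally orders $A$ and so every $<_s$-predecessor of $x$ falls into exactly one of the two sides, and that $L_{s(x)}<\{x\}<R_{s(x)}$ is immediate. Given $y\neq x$ with $L_{s(x)}<\{y\}<R_{s(x)}$, I would rule out $y<_s x$ (which would place $y\in L_{s(x)}\cup R_{s(x)}$, contradicting strictness of one of the inequalities) and then rule out $<_s$-incomparability of $x$ and $y$: the lexicographic hypothesis supplies a common predecessor $z$ of $x$ and $y$ lying strictly between them in $<$, and then $z<_s x$ puts $z\in L_{s(x)}\cup R_{s(x)}$, contradicting the position of $y$ in the gap. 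Hence $x<_s y$, so $x=\{L_{s(x)}\mid R_{s(x)}\}$.

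For (ii), the backward direction is immediate from (i) by uniqueness of the simplest element. For the forward direction, suppose $x<_s y$ and let $a\in L_{s(x)}$; as sign sequences, $a$ is a proper initial subsequence of $x$ and hence also of $y$, so the comparisons of $a$ with $x$ and with $y$ are both decided at position $|a|$, where $x$ and $y$ agree. Therefore $a<x$ forces $a<y$, giving $L_{s(x)}<\{y\}$, and the dual argument yields $\{y\}<R_{s(x)}$.

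For (iii), I would first establish A $\Leftrightarrow$ C directly, where A denotes $x=\{L\mid R\}$, B the cofinality/coinitiality condition, and C the inequality-plus-containment condition. The implication A $\Rightarrow$ C gives $L<\{x\}<R$ at once, and applies the forward direction of (ii) to any $y\neq x$ in the gap $(L,R)$ to place $y$ in $(L_{s(x)},R_{s(x)})$; conversely, C $\Rightarrow$ A uses the backward direction of (ii) to deduce $x<_s y$ for every such $y$, making $x$ the simplest element in the gap. For B $\Leftrightarrow$ C, the key is that cofinality of $L$ with $L_{s(x)}$ is exactly the data needed to transfer a strict inequality $L<\{y\}$ to $L_{s(x)}<\{y\}$, delivering the gap containment; conversely, if A holds but $L$ fails to be cofinal with $L_{s(x)}$, some $a\in L_{s(x)}$ lies strictly above $L$, and then $a<x<R$ places $a$ in the gap $(L,R)$ strictly simpler than $x$, contradicting minimality. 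I expect the main technical care to lie in fixing the precise reading of ``cofinal with'' used in the paper (essentially one-sided domination together with the implicit $L<\{x\}$), after which each implication reduces to direct manipulation of the gap inequalities using (i) and (ii).
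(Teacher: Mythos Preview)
The paper does not supply a proof of this proposition: it is quoted verbatim from \cite[Theorem 2]{EH5} and used as a black box throughout. So there is no ``paper's own proof'' to compare against; the relevant comparison is with the argument in \cite{EH5}, which the present paper does not reproduce.

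That said, your outline is essentially sound and is in the spirit of the original source. The reduction to sign sequences via the representation theorem is exactly the right move for (ii) forward, and your arguments for (i) and for (ii) backward are correct as written. In (i), the incomparable case is handled correctly: the common predecessor $z$ lands in $L_{s(x)}$ or $R_{s(x)}$ and then blocks $y$ from lying strictly inside the gap.

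One point worth tightening in (iii). You are right that the whole argument hinges on the intended meaning of ``$L$ is cofinal with $L_{s(x)}$.'' In Conway's and Ehrlich's usage this is the \emph{one-sided} condition (every member of $L_{s(x)}$ is $\leq$ some member of $L$), not mutual cofinality, and the inequality $L<\{x\}$ is carried implicitly by the very notation $x=\{L\mid R\}$. With that reading your B $\Rightarrow$ C step needs the extra input $L<\{x\}<R$, which you do not get from one-sided cofinality alone; you should make explicit that B is being read under the standing hypothesis that $L<R$ (so that $\{L\mid R\}$ is a meaningful expression) and then argue, as you do, that cofinality transfers the gap containment. Your contrapositive A $\Rightarrow$ B is fine: if some $a\in L_{s(x)}$ strictly exceeds every element of $L$, then $a$ lies in the $(L,R)$ gap and is strictly simpler than $x$, contradicting A. Once you fix the reading, the three conditions fall out exactly as you describe.
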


Let $(\No, <, < _s)$ be the \textbf{lexicographically ordered binary tree of surreal numbers} constructed in any of the manners found in the literature (cf.~\cite{EH4, EH5, EH6, EH8}), including simply letting $(\No, <, < _s) =(B, < _{lex(B)}, < _B)$. Central to the development of the $s$-hierarchical theory of surreal numbers is the following result where a lexicographically ordered binary tree $(A, <, < _s)$ is said to be \textbf{complete}~\cite[Definition 6]{EH5} if whenever $L$ and $R$ are sub\emph{sets} of $A$ for which $L < R$, there is an $x \in A$ such that $x = \{ L\, | \, R \}$. 

\begin{proposition}[\cite{EH5}, Theorem 4 and Proposition 2]\label{Tree 2}
A lexicographically ordered binary tree is complete if and only if it is full if and only if it is isomorphic to $(\No, <, < _s)$.
\end{proposition}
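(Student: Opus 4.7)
My plan is to prove the three-way equivalence in the cycle: complete $\Rightarrow$ full $\Rightarrow$ isomorphic to $(\No,<,<_s)$ $\Rightarrow$ complete. The hardest step will be complete $\Rightarrow$ full, which requires producing the immediate successors demanded by fullness.

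For complete $\Rightarrow$ full, given $x \in A$, the classes $L_{s(x)}$ and $R_{s(x)}$ are subclasses of $\pr_A(x)$ and so are sets, and the pair $(L_{s(x)}\cup\{x\},\, R_{s(x)})$ clearly satisfies $L_{s(x)}\cup\{x\} < R_{s(x)}$. Completeness yields $y = \{L_{s(x)}\cup\{x\}\,|\,R_{s(x)}\}$; Proposition~\ref{Tree 1}(ii) gives $x <_s y$, and any $z$ with $x<_s z$ and $x<z$ satisfies $L_{s(x)}\cup\{x\} < z < R_{s(x)}$, so $y\leq_s z$. Thus $y$ is an immediate right-successor of $x$, and the symmetric construction yields an immediate left-successor. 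For a chain $(x_\alpha)_{\alpha<\beta}$ of limit length, each $x_\alpha$ is either less than every later chain element (precisely when $x_{\alpha+1}$ is a right-successor of $x_\alpha$) or greater than every later chain element; letting $L$ and $R$ denote these two halves, $L<R$ holds, and the element $\{L\,|\,R\}$ supplied by completeness is to be verified, via Proposition~\ref{Tree 1}, to be an immediate successor of the chain.

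For full $\Rightarrow$ isomorphic to $(\No,<,<_s)$, I would appeal to the characterization recalled just before the proposition: every lex-ordered binary tree $A$ is isomorphic to an initial ordered subtree $A'$ of $(B,<_{lex(B)},<_B)$, and $(\No,<,<_s)$ may be identified with this canonical tree. I then show $A'=B$ by transfinite induction on tree-rank in $B$: a root of $A$ maps to a root of $B$ (the empty chain has an immediate successor by fullness); at a successor level, fullness of $A'$ supplies both immediate successors of each element, and initiality forces these to coincide with those in $B$; at a limit level, fullness of $A'$ supplies the immediate successor of the chain of predecessors of any new candidate element of $B$. Finally, isomorphic to $(\No,<,<_s)$ $\Rightarrow$ complete follows once $\No$ is identified with $B$: given sets $L<R$ in $B$, one constructs the simplest separating sign-sequence coordinate by coordinate, with the recursion terminating at a coordinate bounded by the common ordinal bound on lengths of members of $L\cup R$.

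The main obstacle lies in the limit-chain clause of complete $\Rightarrow$ full: one must verify that the element produced from the cut determined by the chain has tree-rank exactly equal to the least ordinal strictly greater than the ranks of the chain members, so that it qualifies as the \emph{immediate} successor of the chain rather than some later descendant. Here Proposition~\ref{Tree 1}(iii) does the real work, by showing that the cut determined by $\{L\,|\,R\}$ has its set of predecessors coinitial with the chain, ruling out any strictly simpler common successor.
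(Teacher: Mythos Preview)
The paper does not actually prove this proposition: it is simply quoted, with attribution to \cite[Theorem 4 and Proposition 2]{EH5}. So there is no ``paper's own proof'' to compare against. That said, your outline is largely sound, and the directions full $\Rightarrow$ isomorphic to $(\No,<,<_s)$ (via the initial-subtree characterization and a rank induction) and isomorphic $\Rightarrow$ complete (via coordinatewise construction in $B$) are handled correctly.

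There is, however, a genuine gap in your limit-chain clause of complete $\Rightarrow$ full, and it is not the one you flag. You worry that $y=\{L\,|\,R\}$ might have rank \emph{too large}; the actual danger is that it may have rank \emph{too small}, failing to satisfy $x_\alpha<_s y$ for all $\alpha$. Your $L$ and $R$ are built only from the chain elements themselves, but a chain need not be downward closed under $<_s$. Concretely, in $\No$ take the chain $x_0=0,\ x_1=1/2,\ x_2=5/8,\ x_3=21/32,\ldots$ (sign sequences $(\,),(+-),(+-+-),(+-+-+-),\ldots$). This is a legitimate $<_s$-chain of limit length, strictly increasing in $<$, so $L$ is the whole chain and $R=\varnothing$. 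Then $\{L\,|\,\varnothing\}=1$, since $1$ is the simplest surreal exceeding every $x_n$; but $1<_s 1/2$, so $1$ is not a successor of the chain at all. The correct immediate successor is $2/3$, obtained only if you first replace the chain by its downward $<_s$-closure $P=\bigcup_\alpha\big(\pr_A(x_\alpha)\cup\{x_\alpha\}\big)$ and partition \emph{that} into left and right pieces. This reduction is harmless (the immediate successors of $(x_\alpha)$ and of $P$ coincide, and $P$ is still a set since each $\pr_A(x_\alpha)$ is), and after it your use of Proposition~\ref{Tree 1}(iii) goes through; but as written the argument is incorrect. Your parenthetical ``precisely when $x_{\alpha+1}$ is a right-successor of $x_\alpha$'' already hints that you were implicitly assuming consecutive chain elements are immediate $<_s$-successors, which is exactly the unwarranted assumption.
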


$\No$'s canonical class $\On$ of ordinals consists of the members of the ``rightmost'' branch of $(\No, <, < _s)$, i.e. the unique branch of $(\No, <, < _s)$ whose members satisfy the condition: $x<y$ if and only if $x<_sy$. 

By a \textbf{cut} in an ordered class $(A, <)$ we mean a pair $(X, Y)$ of subclasses of $A$ where $X<Y$ and $X \cup Y=A$. If $A \subsetneq A'$, $(X, Y)$ is a cut in $A$, and $X<\{z\}<Y$ where $z \in A' \setminus A$, we say $z$ \textbf{realizes the cut} $(X, Y)$. If $z$ realizes a cut in $A$, on occasion we denote the cut by $(A^{<z}, A^{>z})$ where $A^{<z}= \{a\in A:a<z\}$ and $A^{>z}= \{a\in A:a>z\}$.
 For $A\subsetneq \No$ and for $x \in \No$, we say that \textbf{$x$ is the simplest element realizing a cut in $A$} if $x\not\in A$ and if $x = \{ A^{<x}\, |\, A^{>x} \}$. Note that if $A$ is an initial subclass of $\No$ and if $x$ is the simplest element realizing a cut in $A$, then $A \cup \{x\}$ is also initial.

\subsection{The ordered field ${\No}$}\label{sec:surreal field}
With the lexicographically ordered full binary tree $(\No, <, < _s)$ of surreal numbers now at hand, the $s$-hierarchical ordered field of surreal numbers may be introduced as follows, where in the definitions of $+$,
$-$ and $\cdot$, the set-theoretic brackets that enclose the 
sets of ``left-sided options'' and ``right-sided options'' are omitted in 
accordance with custom. 
\begin{theorem}[Conway 1976; Ehrlich 2001]\label{theor12}
$(\No,+,\cdot,<,<_{s})$ is an ordered field when $+$, $-$ and
$\cdot$ are defined by recursion as follows where $x^L$, $x^R$, $y^L$ and
$y^R$ are
understood to range over the members of $L_{s(x)}$, $R_{s(x)}$, $L_{s(y)}$
and $R_{s(y)}$, respectively.
\end{theorem}
\begin{namedthm*}{Definition of $x+y$}
\[
x+y\ =\ \{x^L+y,\, x+y^L\, |\, x^R+y,\, x+y^R\}.
\]
\end{namedthm*}
\begin{namedthm*}{Definition of $-x$}
\[
-x\ =\ \{-x^R\, |\, -x^L\}.
\]
\end{namedthm*}
\begin{namedthm*}{Definition of $xy$}
\[
xy\ =\ \{x^Ly +xy^L-x^Ly^L,\, x^Ry+xy^R-x^Ry^R\, |\, x^Ly+xy^R-x^Ly^R,\, x^Ry+xy^L-x^Ry^L\}.
\]
\end{namedthm*}

\subsection{Conway names}\label{sec:theorems}
Let $\D$ be the set of all surreal numbers having finite tree-rank, and
\[
\R\ =\ \D\cup \big \{ \{ L\, | \, R \}: (L, R)\text{ is a Dedekind gap in }\D \big \},
\]
whereby a Dedekind gap $(L, R)$ in $\mathbb{D}$ we mean a pair of nonempty subsets $L, R$ of $\D$ where $L<R$, $L \cup R =\D$, $L$ has no greatest member and $R$ has no least member.

The following result regarding the structure of $\R$ is essentially due to Conway~\cite[pages 12, 23-25]{CO}.

\begin{proposition}\label{P:Cb}
$\R$ (with $ +, -, \cdot $ and $ < $ defined \`{a} la ${\No}$) is isomorphic to the ordered field of real numbers defined in any of the more familiar ways, $\D$ being ${\No}$'s ring of dyadic rationals (i.e., rationals of the form $m/2^n $ where $m$ and $n$ are integers); 
\[
n\ =\ \{ 0,\dots,n - 1 \, | \, \varnothing \}\ \text{ and }\ - n\ =\ \{ \varnothing \, |\, - (n - 1),\dots,0\}
\]
for each positive integer $n$, $0 = \{ \varnothing \, | \, \varnothing \}$, and the remainder of the dyadics are the arithmetic means of their left and right predecessors of greatest tree-rank; e.g., $1/2 = \{ 0 \, | \, 1 \}$. 
\end{proposition}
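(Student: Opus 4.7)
The strategy divides into two stages: first identify $\D$ with the ordered ring of dyadic rationals, then use the completeness of $(\No,<,<_s)$ to identify $\R$ with the ordered field of real numbers.

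For the first stage, I would induct on tree-rank. The unique root is $0 := \{\varnothing\,|\,\varnothing\}$, and by Proposition~\ref{Tree 1}(i) every $x \in \D$ of finite tree-rank $n > 0$ satisfies $x = \{L_{s(x)}\,|\,R_{s(x)}\}$; the cofinality/coinitiality clause in Proposition~\ref{Tree 1}(iii) lets one replace these option sets by the singletons consisting of the greatest element $a^+$ of $L_{s(x)}$ (when nonempty) and the least element $b^-$ of $R_{s(x)}$ (when nonempty). A simple induction on $n$ then identifies the $2^n$ elements at level $n$ with those dyadic rationals whose Conway/Gonshor sign-expansions have length $n$: on the two outermost branches one finds the integers $\pm n$ in precisely the canonical forms stated in the proposition, while strictly between them each non-integer dyadic $x$ at level $n$ lies between consecutive elements $a,b$ of $\D$ at strictly smaller levels (its left and right predecessors of greatest tree-rank), so $x = \{a\,|\,b\}$. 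That the operations inherited from $\No$ give $\D$ the structure of the ordered ring of dyadic rationals---and in particular that $\{a\,|\,b\} = (a+b)/2$---then follows from Conway's recursive definitions of $+$ and $\cdot$ by induction on the sum of tree-ranks of the operands.

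For the second stage, every Dedekind gap $(L,U)$ in $\D$ corresponds bijectively to a non-dyadic real number (the dyadics being order-dense in the standard reals, and cuts of the standard reals being all realized). By Proposition~\ref{Tree 2}, $(\No,<,<_s)$ is complete, so each such gap is realized by a simplest surreal $\{L\,|\,U\} \in \R \setminus \D$, and conversely every element of $\R \setminus \D$ arises in this way. This yields an order-preserving bijection $\varphi$ from the standard real field onto $\R$ extending the identification of dyadics. To see that $\varphi$ respects $+$ and $\cdot$, I would invoke the cofinality/coinitiality behavior of Conway's sum and product: for any $x = \{L\,|\,R\}$ and $y = \{L'\,|\,R'\}$ in $\No$, one has $x + y = \{L+y,\, x+L'\,|\, R+y,\, x+R'\}$, with a similar (sign-adjusted) formula for $x \cdot y$. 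Since $\D$ is order-dense in $\R$, these formulas imply that surreal arithmetic restricted to $\R$ is exactly the continuous extension of dyadic arithmetic, which agrees with the standard real field operations.

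The main obstacle in carrying out this plan is verifying that the surreal sum and product restricted to $\R$ really do yield the standard operations of the real field. The order-theoretic bijection onto the standard reals is essentially tautological given Proposition~\ref{Tree 2}, but arithmetic compatibility relies on the non-trivial invariance of Conway's recursive operations under cofinal/coinitial replacement of option sets. This invariance is contained in Conway's theorem that $\No$ is a field; granting it, the identifications described above reduce to careful bookkeeping.
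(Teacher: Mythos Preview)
The paper does not actually prove this proposition; it is stated as a known result and attributed to Conway~\cite[pages 12, 23--25]{CO}. Your two-stage plan---first identifying $\D$ with the dyadic rationals by induction on tree-rank, then using the completeness of $(\No,<,<_s)$ together with the cofinality/coinitiality invariance of Conway's recursive operations to extend to all of $\R$---is precisely the standard route, and your outline is correct. You have also correctly located the only nontrivial step: verifying that the surreal $+$ and $\cdot$ restricted to $\R$ agree with the usual real-field operations, which hinges on the uniformity of Conway's definitions under replacement of option sets by cofinal/coinitial subsets (Proposition~\ref{Tree 1}(iii)) together with the density of $\D$ in $\R$.
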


$\R$ is the unique Dedekind complete initial subfield of $\No$. Henceforth, all references to the reals are understood to be references to $\R$. 

A striking $s$-hierarchical feature of $\No$ is that every surreal number can be assigned a canonical ``proper name'' that is a reflection of its characteristic $s$-hierarchical properties. These \textbf{Conway names} or \textbf{normal forms} are expressed as formal sums of the form 
\[
\sum_{\alpha < \beta } r_\alpha\omega ^{y_\alpha } 
\]
where $\beta $ is an ordinal, $(y_\alpha)_{\alpha < \beta } $ is a strictly decreasing sequence of surreals, and $(r_\alpha)_{\alpha < \beta } $ is a sequence of nonzero real numbers, the Conway name of an ordinal being just its Cantor normal form, it being understood that $0$ is the empty sum indexed over $\alpha<\beta=0$~\cite[pages 31-33]{CO} and~\cite[\S3.1 and \S5]{EH5}. 

Every nonzero surreal $x$ is the sum of three components, each of which can be succinctly characterized in terms of the Conway name of $x$: the \textbf{purely infinite} component of $x$, whose terms solely have positive exponents; the \textbf{real} component of $x$, whose sole term (if it is not the empty sum) has exponent $0$; and the \textbf{infinitesimal} component of $x$, whose terms solely have negative exponents. Notice that $0$, being the empty sum, may be regarded as purely infinite.

The surreal numbers having Conway names of the form $\omega^y$ are called \textbf{leaders} since they denote the simplest positive members of the various Archimedean classes of $\No$. More formally, they may be inductively defined by the formula
\[
\omega^{y}\ =\ \Big\{ 0,\, n\omega^{y^L}\, \Big|\, \frac{1}{2^n}\omega^{y^R} \Big\},
\]
where $n$ ranges over the positive integers and $y^L$ and $y^R$ range over the elements of $L_{s(y)}$ and $R_{s(y)}$, respectively. For use in the proof of Lemma~\ref{deltaatomicz}, we need the following lemma:

\begin{lemma}\label{lem-conwaytrunc}
Let $x = \sum_{\alpha < \beta } r_\alpha\omega ^{y_\alpha } \in \No$, let $\sigma<\beta$, let $z:=\sum_{\alpha < \sigma } r_\alpha\omega ^{y_\alpha }$, and let $\epsilon \in \{\pm1\}$ be the sign of $r_\sigma$. Then $z+\epsilon\omega^{y_\sigma} \leq_s x$.
\end{lemma}
\begin{proof}
By replacing $x$ with $-x$ if need be, we may assume that $r_\sigma$ is positive, so $\epsilon = 1$. It suffices to find a representation $\{L\, |\,R\}$ of $z+\omega^{y_\sigma}$ with $L<\{x\}<R$, for the result then follows by Proposition~\ref{Tree 1}. As above, we have
\[
\omega^{y_\sigma}\ =\ \Big\{ 0,\, n\omega^{y_\sigma^L}\, \Big|\, \frac{1}{2^n}\omega^{y_\sigma^R} \Big\},
\]
where $n$ ranges over the positive integers and $y_\sigma^L$ and $y_\sigma^R$ range over the elements of $L_{s(y_\sigma)}$ and $R_{s(y_\sigma)}$, respectively. We also have
\[
z\ =\ \Big\{ \sum_{\alpha < \sigma_0 } r_\alpha\omega ^{y_\alpha }+ q_L \omega^{y_{\sigma_0}} \, \Big|\, \sum_{\alpha < \sigma_0 } r_\alpha\omega ^{y_\alpha }+ q_R \omega^{y_{\sigma_0}} \Big\},
\]
where $\sigma_0$ ranges over ordinals less than $\sigma$ and $q_L$ and $q_R$ range over rational numbers with $q_L<r_{\sigma_0}<q_R$~\cite[Theorem 15]{EH5}. The formula for addition yields
\[
z+\omega^{y_\sigma}\ =\ \Big\{\sum_{\alpha < \sigma_0 } r_\alpha\omega ^{y_\alpha }+ q_L \omega^{y_{\sigma_0}}+\omega^{y_\sigma},\, z+n\omega^{y_\sigma^L}\, \Big|\, \sum_{\alpha < \sigma_0 } r_\alpha\omega ^{y_\alpha }+ q_R \omega^{y_{\sigma_0}}+\omega^{y_\sigma},\, z+\frac{1}{2^n}\omega^{y_\sigma^R}\Big\},
\]
where $n$, $y_\sigma^L$, $y_\sigma^R$, $\sigma_0$, $q_L$, and $q_R$ are as above. It is straightforward to verify that $x$ lies between the left and right options in the above representation, so $z+\omega^{y_\sigma}\leq_s x$.
\end{proof}

We note that Lemma~\ref{lem-conwaytrunc} is a consequence of~\cite[Theorem 5.12]{GO}, though translating Gonshor's statement for our purposes requires a bit of background.

\subsection{Infinite sums}\label{sec:convergence}
There is a notion of convergence in $\No$ for sequences and series of surreals that can be conveniently expressed using normal forms written as above with dummy terms. Let $x\in\No$ and for each $y\in\No$, let $r_{y}(x)$ be the coefficient of $\omega^y$ in the normal form of $x$, it being understood that $r_{y}(x)=0$, if $\omega^y$ does not occur. Also let $(x_n)_{n \in \N}$ be a sequence of surreals written in normal form. Following Conway (and adopting notation of Siegel~\cite[page 432]{SI}), we say that $(x_n)_{n\in \N}$ \textbf{converges} to $x$ if
\[
r_y (x)\ =\ \lim _{n\rightarrow \infty }r_y (x_{n}),\ \text{for all $y\in\No$},
\]
and we also write
\[
x\ =\ \sum_{n=0}^{\infty }x_n
\]
to mean \textbf{the partial sums of the series converge to $x$}.

Among the convergent sequences and series of surreals are those whose mode of convergence is quite distinctive: for each $y\in\No$, there is an $m\in\N$ such that $r_{y}(x_n)=r_{y}(x_m)$ for all $n\geq m$. Thus, for each $y\in\bf {No}$,
 \[
 r_y (x)\ =\ \lim _{n\rightarrow \infty }r_y (x_{n})\ =\ r_{y}(x_m),
 \]
where $m$ depends on $y$.

Conway~\cite[page 40]{CO}, calls this mode of convergence \textbf{absolute convergence}, and he (also see~\cite[pages 432-434]{SI}) proves the following result:

\begin{proposition}\label{sur5}
Let $f$ be a formal power series with real coefficients, i.e. let
\[
f (x) \ =\ \sum_{n=0}^{\infty }r_nx^n.
\]
Then $f (\epsilon)$ is absolutely convergent for all infinitesimals $\epsilon$ in $\No$. 
\end{proposition}

Conway proof make use of a classical combinatorial result of Neumann (\cite[pages 206-209]{N},\cite[Lemma 3.2]{SI},~\cite[pages 260-266]{AL}); and, indeed, as Conway indicates, his proof is a straightforward adaptation to $\No$ of a classical proof of Neumann~\cite[page 210]{N},~\cite[page 267]{AL} about Hahn fields (see below) and division rings of formal power series more generally. 

\subsection{Distinguished ordered binary subtrees of $\No$}
Henceforth, the class of $\No$'s leaders will be denoted `$Lead_{\No}$' or `$\omega^{\No}$' and the class of $\No$'s purely infinite numbers will be denoted `$\No_{\PI}$'. $\Oz$ is the canonical integer part of $\No$ consisting of the surreals whose Conway names have no negative exponents and whose coefficient for any term whose exponent is $0$ is an integer~\cite[p. 45]{CO}. $Lead_{\No}$, $\No_{\PI}$ and $\Oz$ all have ordered tree structures inherited from $(\No, <, < _s)$. In the subsequent discussion we will appeal to the following results about these substructures of $(\No, <, < _s)$, that come from~\cite[page 3: Note 2]{EH7},~\cite[page 1245: Theorem 11]{EH5} and~\cite{BH}, respectively.

\begin{lemma}
\label{lemma 0}
$(\Oz, <\restriction_{ \Oz}, < _{s}\restriction_{\Oz})$ is an initial subtree of $(\No, <, < _s)$.
\end{lemma}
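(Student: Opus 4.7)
The plan is to verify the criterion that $\Oz$ is closed downward under $<_s$ in $\No$: for every $x \in \Oz$ and every $y \in \No$ with $y <_s x$, we must have $y \in \Oz$. The central device will be the canonical decomposition $x = p + n$, where $p \in \No_{\PI}$ is the purely infinite part of $x$ and $n \in \Z$ is the coefficient of the $\omega^0$ term in the Conway name of $x$ (taking $n = 0$ when no $\omega^0$ term appears); this decomposition is at our disposal precisely because the Conway name of $x$ has no negative exponents and an integer constant coefficient.

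The argument draws on two inputs. First, by the second of the two results attributed to~\cite{EH5} in the paragraph preceding the lemma (\cite[p.\,1245, Theorem 11]{EH5}), $\No_{\PI}$ is itself an initial subtree of $\No$, so every $<_s$-predecessor of $p$ lies in $\No_{\PI}$ and therefore in $\Oz$. Second, I would invoke Gonshor's formula for sign expansions of sums (\cite{GO}), from which the sign expansion of $p + n$ is the concatenation of the sign expansion of $p$ with $n$ consecutive $+$'s when $n > 0$, with $|n|$ consecutive $-$'s when $n < 0$, and with nothing when $n = 0$. Identifying $<_s$-predecessors of a surreal with proper initial segments of its sign expansion, the predecessors of $p + n$ then partition cleanly into (i) the predecessors of $p$, already in $\Oz$ by the first input, together with (ii) the surreals $p + k$ for integers $k$ strictly between $0$ and $n$, which are manifestly in $\Oz$. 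In either case the predecessor belongs to $\Oz$, which is what initiality demands.

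The principal subtlety will be justifying the sign expansion concatenation fact invoked in~(ii). If one prefers to keep the proof self-contained, a backup plan is to replace that step by induction on $|n|$: the base $n = 0$ is precisely the initiality of $\No_{\PI}$, and the inductive step reduces to showing that $p + (n - 1)$ is the unique immediate $<_s$-predecessor of $p + n$ when $n > 0$ (symmetrically for $n < 0$). That claim in turn should be extractable from the recursive surreal-sum formula $x + y = \{\, x^L + y,\, x + y^L \mid x^R + y,\, x + y^R \,\}$ applied iteratively to $p$ and $n$, combined with Proposition~\ref{Tree 1}(iii), which lets one pare the resulting cut down to its canonical form and confirm that no new simplicity-level is introduced strictly between $p + (n-1)$ and $p + n$.
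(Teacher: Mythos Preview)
Your central step fails: $\No_{\PI}$ is \emph{not} an initial subtree of $\No$. For instance $\omega \in \No_{\PI}$, but its $<_s$-predecessors in $\No$ are $0,1,2,\ldots$, and none of the positive integers is purely infinite (their sign expansions have finite, hence successor, length). The citation you invoke, \cite[p.\,1245, Theorem 11]{EH5}, is attached in the paper to Lemma~\ref{lemma 1} about $Lead_{\No}$, not to $\No_{\PI}$; and in any case that result asserts that $Lead_{\No}$ is a lexicographically ordered full binary tree in its own right, a statement about its internal order structure rather than about initiality in $\No$. Lemma~\ref{lemma 2}, the paper's result about $\No_{\PI}$, likewise asserts only full-binary-tree-ness, and its proof actually \emph{uses} Lemma~\ref{lemma 0}, so appealing to it here would be circular even if it said what you need. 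Your backup plan inherits the same defect, since its base case $n=0$ is precisely the claim that $<_s$-predecessors of purely infinite elements lie in $\Oz$.

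The paper does not supply its own proof of Lemma~\ref{lemma 0}, deferring instead to \cite[page 3: Note 2]{EH7}. The underlying idea, visible in the paper's proof of Lemma~\ref{lemma 2}, is Gonshor's sign-expansion characterization of $\Oz$ \cite[p.\,111, Theorem 8.1]{GO}: membership in $\Oz$ is a condition on the sign expansion forbidding sign changes at successor positions, and such a condition is manifestly preserved under passage to proper initial segments. That gives initiality of $\Oz$ directly. If you wanted to salvage your decomposition $x=p+n$, you would first need this sign-expansion argument to handle the purely infinite part $p$; but once you have it, it already handles all of $\Oz$, and the decomposition becomes superfluous.
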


\begin{lemma}
\label{lemma 1}
$(Lead_{\No}, <\restriction_{ Lead_{\No}}, < _{s}\restriction_{ Lead_{\No} })$ is a lexicographically ordered full binary tree.
\end{lemma}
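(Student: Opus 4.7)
The plan is to exhibit the map $\varphi\colon\No\to Lead_{\No}$ given by $\varphi(y) = \omega^y$ as an isomorphism of lexicographically ordered binary trees. Since $(\No,<,<_s)$ is a lexicographically ordered full binary tree by Proposition~\ref{Tree 2}, such an isomorphism will transport the structure and prove the lemma. As a first step, I would verify that $\varphi$ is a bijection preserving the order $<$ in both directions: surjectivity holds by the definition of $Lead_{\No}$, while injectivity and order-preservation follow by a routine transfinite induction from the inductive formula $\omega^y = \{0,n\omega^{y^L}\,|\,\tfrac{1}{2^n}\omega^{y^R}\}$.

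The crux is showing that $\varphi$ also preserves the simplicity order $<_s$ in both directions. I would prove the forward direction by transfinite induction on the tree-rank of $y_2$, splitting the hypothesis $y_1 <_s y_2$ into the subcases $y_1 \in L_{s(y_2)}$ and $y_1 \in R_{s(y_2)}$. In the first subcase, $\omega^{y_1}$ appears verbatim (with coefficient $n=1$) among the left options of $\omega^{y_2}$, so $\omega^{y_1} <_s \omega^{y_2}$ is immediate. In the second subcase, I would invoke Proposition~\ref{Tree 1}(ii) to reduce $\omega^{y_1} <_s \omega^{y_2}$ to the pair of inequalities $L_{s(\omega^{y_1})} < \{\omega^{y_2}\} < R_{s(\omega^{y_1})}$, which by Proposition~\ref{Tree 1}(iii) need only be checked against the cofinal/coinitial options $n\omega^{y_1^L}$ and $\tfrac{1}{2^n}\omega^{y_1^R}$ displayed in $\omega^{y_1}$'s canonical form. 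Here the essential tree-theoretic observation is that $y_1 \in R_{s(y_2)}$ places $y_2$ in the subtree of $\No$ rooted at $y_1$: hence any $y_1^L \in L_{s(y_1)}$ satisfies $y_1^L < y_2$ (so $\omega^{y_1^L}$ lies in a strictly smaller Archimedean class than $\omega^{y_2}$, whence $n\omega^{y_1^L} < \omega^{y_2}$), while any $y_1^R \in R_{s(y_1)}$ satisfies $y_1^R > y_1 > y_2$ (so $\tfrac{1}{2^n}\omega^{y_1^R} > \omega^{y_2}$). This inductive step is the one delicate point in the argument.

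The reverse direction of simplicity-preservation then follows for free. Suppose $\omega^{y_1} <_s \omega^{y_2}$ but $y_1\not<_s y_2$. The trichotomy of $<_s$ in $\No$ forces one of the following: $y_2<_s y_1$ (whence the forward direction yields $\omega^{y_2}<_s\omega^{y_1}$, contradicting $\omega^{y_1}<_s\omega^{y_2}$); $y_1 = y_2$ (giving $\omega^{y_1}=\omega^{y_2}$, again a contradiction); or $y_1,y_2$ are $<_s$-incomparable, in which case the lexicographic property of $\No$ supplies a common $<_s$-predecessor $z$ of $y_1$ and $y_2$ lying between them in $<$, and the forward direction together with the order-preservation of $\varphi$ then exhibits $\omega^z$ as a common $<_s$-predecessor of $\omega^{y_1}$ and $\omega^{y_2}$ lying between them in $<$, contradicting $\omega^{y_1}<_s\omega^{y_2}$. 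Thus $\varphi$ is a bijection preserving both $<$ and $<_s$ in each direction, and so it transports the lexicographically ordered full binary tree structure from $\No$ to $Lead_{\No}$, proving the lemma.
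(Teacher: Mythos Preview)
The paper does not prove this lemma; it simply cites \cite[Theorem 11, page 1245]{EH5}. So there is no in-paper argument to compare against, and your strategy of exhibiting $y\mapsto\omega^y$ as an isomorphism of lexicographically ordered binary trees is the natural one and is essentially what is done in the cited source.

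That said, there is a genuine slip in your handling of the first subcase ($y_1\in L_{s(y_2)}$). You write that since $\omega^{y_1}$ appears (with $n=1$) among the left options in the formula $\omega^{y_2}=\{0,n\omega^{y_2^L}\mid\tfrac{1}{2^n}\omega^{y_2^R}\}$, the conclusion $\omega^{y_1}<_s\omega^{y_2}$ is immediate. But appearing as an option in an expression $\{L\mid R\}$ does \emph{not} imply being $<_s$-simpler than $\{L\mid R\}$: Proposition~\ref{Tree 1}(iii) only says $L$ is cofinal with $L_{s(\{L\mid R\})}$, not that $L\subseteq L_{s(\{L\mid R\})}$. (For instance $2=\{1,\tfrac{3}{2}\mid\varnothing\}$, yet $\tfrac{3}{2}\not<_s 2$.) The inference you want is precisely the statement being proved, so the shortcut is circular.

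The fix is easy: treat the first subcase exactly as you treat the second. From $y_1<_s y_2$ Proposition~\ref{Tree 1}(ii) gives $L_{s(y_1)}<\{y_2\}<R_{s(y_1)}$ regardless of whether $y_1<y_2$ or $y_1>y_2$; hence every $y_1^L$ satisfies $y_1^L<y_2$ (so $n\omega^{y_1^L}<\omega^{y_2}$ by Archimedean-class considerations) and every $y_1^R$ satisfies $y_2<y_1^R$ (so $\omega^{y_2}<\tfrac{1}{2^n}\omega^{y_1^R}$), and Proposition~\ref{Tree 1}(ii) then yields $\omega^{y_1}<_s\omega^{y_2}$. With this correction both subcases collapse into one, the transfinite induction on $\rho_{\No}(y_2)$ is never actually invoked, and the rest of your argument goes through.
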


\begin{lemma}
\label{lemma 2}
$(\No_{\PI}, <\restriction_{ \No_{\PI}}, < _{s}\restriction_{ \No_{\PI}})$ is a lexicographically ordered full binary tree.
\end{lemma}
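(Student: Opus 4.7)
The plan is to appeal to Proposition~\ref{Tree 2}: it suffices to show that $(\No_{\PI}, <\restriction_{\No_{\PI}}, <_s\restriction_{\No_{\PI}})$ is a complete lexicographically ordered binary tree. The tree property is immediate, since for each $x \in \No_{\PI}$ the class of $<_s$-predecessors in $\No_{\PI}$ is $\pr_{\No}(x) \cap \No_{\PI}$, a subclass of the set $\pr_{\No}(x)$ and hence itself a set well-ordered by $<_s$.

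For the binary and lexicographic properties, my strategy is to exploit the Conway normal form of purely infinite surreals. The $<_s$-predecessors in $\No_{\PI}$ of a purely infinite $x = \sum_{\alpha<\beta} r_\alpha \omega^{y_\alpha}$ (with $y_\alpha > 0$) can be described via its normal form: they are the proper initial partial sums $\sum_{\alpha < \gamma} r_\alpha \omega^{y_\alpha}$, together with terminal variations obtained from such a partial sum by appending a trailing term whose exponent or coefficient is a $<_s$-simpler positive surreal or $<_s$-simpler nonzero real, respectively (a description extractable from~\cite[Ch.~5]{GO}). From this one reads off that each element of $\No_{\PI}$ has at most two immediate $<_s$-successors in $\No_{\PI}$ and that each chain of limit length has at most one immediate successor, yielding binariness; and one likewise verifies that any two $<_s$-incomparable elements $x, y \in \No_{\PI}$ share a common $<_s$-predecessor in $\No_{\PI}$ lying strictly between them, built by taking the longest common initial segment of their normal forms and, if necessary, appending a suitable trailing term.

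The principal obstacle is completeness: for arbitrary sets $L, R \subseteq \No_{\PI}$ with $L < R$, we must produce the $<_s$-simplest element $x \in \No_{\PI}$ with $L < x < R$. Note that the $\No$-simplest separator $\{L \, | \, R\}_{\No}$ need not be purely infinite—for instance, $\{0 \, | \, \omega\}_{\No} = 1 \notin \No_{\PI}$—so the simplest purely infinite separator must be constructed directly. My plan is to build the Conway normal form of $x$ recursively in $\alpha$: given a partial sum $s_\alpha$ that admits a purely infinite extension into the open interval $(L, R)$, choose $y_\alpha$ as the $<_s$-simplest positive surreal and $r_\alpha$ as the $<_s$-simplest nonzero real for which $s_\alpha + r_\alpha \omega^{y_\alpha}$ still admits such an extension; the recursion terminates once $L$ and $R$ pinch down to a unique purely infinite element. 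Verifying well-definedness, termination at some ordinal stage, and that the result is the $<_s$-simplest purely infinite separator is the technical heart of the argument, and rests on the $s$-hierarchical properties of the Conway normal form established in~\cite[\S3.1, \S5]{EH5} and~\cite[Ch.~5]{GO}. Granted completeness, Proposition~\ref{Tree 2} delivers the conclusion.
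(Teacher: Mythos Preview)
Your approach is genuinely different from the paper's and, while plausible in outline, takes a far more laborious road and leaves the main technical step unfinished.

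The paper's proof avoids Conway normal forms and completeness entirely. It works directly with sign-expansions: since $\Oz$ is an initial subtree of $\No$ (Lemma~\ref{lemma 0}) and its members are exactly the surreals whose sign-expansions contain no $+-$ or $-+$ adjacency, one sees that $\No_{\PI}$ consists precisely of those elements of $\Oz$ whose sign-expansions have limit ordinal length. Fullness is then verified in one stroke: for $x \in \No_{\PI}$, appending $\omega$ pluses (resp.\ $\omega$ minuses) to the sign-expansion of $x$ produces the purely infinite immediate successor greater (resp.\ less) than $x$; and for a chain of limit length in $\No_{\PI}$, the immediate successor in $\No$ already has limit-length sign-expansion and so lies in $\No_{\PI}$. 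This handles existence of two successors and of chain-successors directly, so no separate completeness argument is needed.

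By contrast, you invoke Proposition~\ref{Tree 2} and aim to prove completeness by recursively building the normal form of the simplest purely infinite separator of $L<R$. This is not wrong in spirit---the $s$-hierarchical analysis of normal forms in \cite[\S5]{EH5} does make such a construction possible---but the order in which one chooses the next exponent $y_\alpha$ and coefficient $r_\alpha$, the verification that the result is $<_s$-minimal among purely infinite separators (not merely among all separators), and the termination argument are each nontrivial, and you explicitly defer them. Your normal-form description of the $<_s$-predecessors in $\No_{\PI}$ is also delicate to state precisely and would itself require justification from \cite{GO} or \cite{EH5}. The sign-expansion route sidesteps all of this: once one has the characterization of $\No_{\PI}$ as ``limit-length elements of $\Oz$,'' the tree structure is immediate.
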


\begin{corollary}
\label{corollary 1}
The following are lexicographically ordered full binary trees: 
\begin{enumerate}[(i)]
\item $(Lead_\No^{>1}, <\restriction_{ Lead_\No^{>1}}, < _{s}\restriction_{ Lead_\No^{>1} })$,
\item $(\No_{\PI}^{>0}, <\restriction_{ \No_{\PI}^{>0}}, < _{s}\restriction_{\No_{\PI}^{>0} })$, 
\item $(Lead_\No^{<1}, <\restriction_{ Lead_\No^{<1}}, < _{s}\restriction_{ Lead_\No^{<1} })$, and 
\item$(\No_{\PI}^{<0}, <\restriction_{ \No_{\PI}^{<0}}, < _{s}\restriction_{\No_{\PI}^{<0}})$.
\end{enumerate}
\end{corollary}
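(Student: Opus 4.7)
The approach is to deduce the corollary from Lemmas~\ref{lemma 1} and~\ref{lemma 2} by establishing the following general fact: in any lexicographically ordered full binary tree $(A,<,<_s)$ with root $r$, both $(A^{>r},<\restriction_{A^{>r}},<_s\restriction_{A^{>r}})$ and $(A^{<r},<\restriction_{A^{<r}},<_s\restriction_{A^{<r}})$ are themselves lexicographically ordered full binary trees. Granting this, one identifies the root of $Lead_{\No}$ as $1$ (since $\omega^0=1$ is the simplest leader, the root $0$ of $\No$ itself failing to be a leader) and the root of $\No_{\PI}$ as $0$ (its Conway name being the empty sum, vacuously having only positive exponents); items (i) and (iii) then follow from applying the general fact to $Lead_{\No}$, while (ii) and (iv) follow from applying it to $\No_{\PI}$.

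The crucial technical point behind the general fact is a downward-closure observation: if $x\in A^{>r}$ and $z\in A$ satisfies $x<_s z$, then $z\in A^{>r}$. Indeed, since $r$ is the root we have $r<_s x$ and $r<x$, so $r\in L_{s(x)}$; by Proposition~\ref{Tree 1}(ii), $x<_s z$ gives $L_{s(x)}<\{z\}<R_{s(x)}$, whence $z>r$. The symmetric statement for $A^{<r}$ uses $r\in R_{s(x)}$ in place of $r\in L_{s(x)}$.

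With this closure in hand, the verification that $A^{>r}$ is a lexicographically ordered full binary tree proceeds as follows. The right immediate successor of $r$ in $A$ is the unique root of $A^{>r}$, since its only $A$-predecessor is $r$. Each $x\in A^{>r}$ has two immediate successors in $A^{>r}$: the right one is automatically $>x>r$, and the left one is the simplest element strictly between $L_{s(x)}$ and $x$, hence $>r$ because $r\in L_{s(x)}$. Every limit-length chain in $A^{>r}$ has its $A$-immediate successor in $A^{>r}$, by the same argument applied to the simplest member of the chain. Crucially, the closure observation ensures that immediate successors and limit-successors in the restricted tree coincide with those in the ambient one, because every element of $A$ sitting $<_s$-between two members of $A^{>r}$ is itself in $A^{>r}$. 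Finally, the lexicographic condition transfers: if $x,y\in A^{>r}$ are $<_s$-incomparable, then any common $A$-predecessor $z$ lying strictly between them in $<$ satisfies $z>\min(x,y)>r$, whence $z\in A^{>r}$.

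The main potential obstacle is confirming that the tree structure inherited on the restricted class genuinely agrees with the structure needed for fullness (immediate successors matching, no new roots appearing, etc.), but this is precisely what the downward-closure observation delivers; beyond that, everything reduces to Proposition~\ref{Tree 1}(ii) together with the identification of the roots $1$ and $0$ in the two ambient trees.
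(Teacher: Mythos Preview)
Your approach is essentially the same as the paper's: deduce the corollary from Lemmas~\ref{lemma 1} and~\ref{lemma 2} via the general fact that deleting the root of a lexicographically ordered full binary tree yields two such trees, with roots $1$ and $0$ identified in the respective cases. The paper simply asserts this general fact without proof, whereas you supply a careful verification via the downward-closure observation and Proposition~\ref{Tree 1}(ii); your added detail is correct and fills in exactly what the paper leaves implicit.
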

\begin{proof}
This follows from Lemmas~\ref{lemma 1} and~\ref{lemma 2} and the simple fact that deleting the root of a lexicographically ordered full binary tree, in this case $1$ and $0$ respectively, results in two such trees. 
\end{proof}

\section{Preliminaries II: surreal exponentiation}\label{sec:SE}
The Kruskal-Gonshor surreal exponential function $\exp$~\cite[Chapter 10]{GO} is defined by recursion as follows:
\[
\exp (x)\ =\ \Big\{ 0,\, (\exp x^L) [x-x^L]_{n},\, (\exp x^R) [x-x^R]_{2n+1}\, \Big|\, \frac{\exp x^L}{[x^L-x]_{2n+1}},\, \frac{\exp x^R}{[x^R-x]_{n}}\Big\},
\]
where $x^L$ and $x^R$ range over $L_{s(x)}$ and $R_{s(x)}$ respectively, $n$ ranges over $\N$, and $[y]_{n}$ denotes $1+y+\frac{y^2}{2!}+...+\frac{y^n}{n!}$ for all surreal $y$, it being understood that expressions containing terms of the form $[y]_{2n+1}$ are to be included only when $[y]_{2n+1} >0$. Recall that, being an elementary extension of $(\R,e^x)$ (Proposition~\ref{DE2}), $(\No,\exp)$ satisfies the condition: $\exp(x) > x^n$ for each positive infinite surreal number $x$ and each natural number $n$.

While the definition of $\exp$ is quite complicated for the general surreal case, the following result of Gonshor~\cite[pages 149-157]{GO} shows it reduces to more revealing and manageable forms for the three theoretically significant cases. 

\begin{proposition}
\label{Gonshor 1986}
Let $\exp$ be the Kruskal-Gonshor exponential on $\No$.
\begin{enumerate}[(i)]
\item $\exp (x)=e^x$ for all $x \in \R$; 
\item $\exp (x)=\sum_{\ n =0 }^{\infty }x^{n}/{n!}$ for all infinitesimal $x$;
\item if $x$ is purely infinite, then
\[
\exp (x)\ =\ \Big \{ 0,\, (\exp x^L)(x-x^L)^{n}\, \Big|\, \frac{\exp x^R}{(x^R-x)^n}\Big \},
\]
where $x^L$ and $x^R$ now range over all purely infinite predecessors of $x$ with $x^L<x<x^R$.
\end{enumerate}
\end{proposition}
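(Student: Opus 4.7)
My plan is to address the three cases in sequence, leveraging structural properties of $\exp$ that I would establish as preparatory lemmas directly from the recursive definition: strict monotonicity on $\No$, the identity $\exp(-x) = 1/\exp(x)$, and most importantly the additivity $\exp(x+y) = \exp(x)\exp(y)$. With these in hand, each case reduces to a cut-theoretic analysis showing which options in the defining cut for $\exp(x)$ are actually essential.

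For (i), I would first verify $\exp(0) = \{0 \mid \varnothing\} = 1$ directly from the recursion. Next I would compute $\exp(1)$: the only predecessor of $1$ is $0$, so the options collapse to $\{0, [1]_n \mid 1/[-1]_{2n+1}\}$, and since the partial sums $[1]_n$ approach $e$ strictly from below while $1/[-1]_{2n+1}$ approach $e$ strictly from above, and $e \in \R$ is the simplest real in that interval, one concludes $\exp(1) = e$. I would then induct on tree-rank for dyadic rationals by way of additivity and halving. Finally, for an arbitrary $r \in \R$, strict monotonicity together with the dyadic case gives $\exp(r)$ as the simplest surreal between $\{e^q : q \in \D, q < r\}$ and $\{e^q : q \in \D, q > r\}$, which is $e^r$.

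For (ii), all predecessors $x^L, x^R$ used in the cut for an infinitesimal $x$ are themselves infinitesimal. The polynomial $[x-x^L]_n$ is the $n$-th Taylor partial sum of $\exp$ at $x-x^L$, and the Hahn-field series $\sum_{k \geq 0} (x-x^L)^k/k!$ converges (in the ambient Hahn field) to $\exp(x-x^L)$ because $x - x^L$ is infinitesimal. By additivity and the inductive hypothesis on simpler predecessors, the left options $(\exp x^L)[x-x^L]_n$ approach $\exp(x^L)\exp(x-x^L) = \exp(x)$ from below as $n \to \infty$, and analogously the right options approach $\exp(x)$ from above. The element $\sum_n x^n/n!$, a legitimate Hahn element since $x$ is infinitesimal, then sits at the simplest position in this cut.

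For (iii), which I expect to be the main obstacle, two reductions must be carried out simultaneously: removing the non-purely-infinite predecessors from the option set, and simplifying $[x-x^L]_n$ and $[x-x^R]_{2n+1}$ to the monomials $(x-x^L)^n$ and $(x^R-x)^n$, with the ``crossed'' options $(\exp x^R)[x-x^R]_{2n+1}$ and $(\exp x^L)/[x^L-x]_{2n+1}$ becoming redundant by sign analysis (with $x - x^R$ and $x^L - x$ negative infinite of definite sign, the odd partial sums are dominated by their leading negative term, forcing these options to fall on the wrong side of $\exp(x)$ and be absorbed by the trivial option $0$). Lemma~\ref{lemma 2} is the critical tool: the purely infinite predecessors of $x$ form a lexicographically ordered full binary subtree. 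Given a non-purely-infinite predecessor $y$ of $x$, I would decompose $y = y_{\PI} + c$ where $y_{\PI}$ is a purely infinite predecessor of $x$ and $c$ is a bounded real-plus-infinitesimal correction, then use additivity to exhibit the corresponding option as cofinal with (hence redundant against) those arising from $y_{\PI}$. For the monomial simplification, I would use that for a positive infinite surreal $z = x - x^L$, the partial sum $[z]_n$ and the monomial $z^n/n!$ differ by a term of strictly lower Archimedean order, so the families $\{(\exp x^L)[z]_n\}_n$ and $\{(\exp x^L) z^n\}_n$ are mutually cofinal. The main delicacy will be ensuring that the \emph{simplest} element of the reduced cut coincides with the simplest element of the original cut, which is where Proposition~\ref{Tree 1}(iii) on cofinal/coinitial refinements of cuts enters in an essential way.
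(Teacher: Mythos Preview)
The paper does not prove this proposition at all: it is stated as a result of Gonshor and attributed to~\cite[pages 149--157]{GO}, with no argument given in the paper itself. So there is no ``paper's proof'' to compare against; you are attempting to reprove a cited result.

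That said, your sketch has genuine problems. First, you propose to establish additivity $\exp(x+y)=\exp(x)\exp(y)$ as a ``preparatory lemma directly from the recursive definition''; in Gonshor's treatment this is itself the hardest part of the entire development (several pages of intricate cofinality arguments), so calling it a lemma hides most of the work. Second, your argument for (ii) opens with the claim that ``all predecessors $x^L,x^R$ used in the cut for an infinitesimal $x$ are themselves infinitesimal.'' This is false: for instance, the predecessors of $1/\omega$ in $(\No,<_s)$ are $0,1,\tfrac12,\tfrac14,\ldots$, none of which (apart from $0$) is infinitesimal. Your inductive hypothesis therefore does not apply to the right options, and the argument as written collapses. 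Third, in (iii) you say ``the purely infinite predecessors of $x$ form a lexicographically ordered full binary subtree''; predecessors of a single node form a well-ordered \emph{chain}, never a branching tree, so this is a misreading of Lemma~\ref{lemma 2}. What that lemma actually gives you is that $\No_{\PI}$ is itself a full binary tree under the restricted $<_s$, so that the $\No_{\PI}$-predecessors of $x$ are cofinal (resp.\ coinitial) among all predecessors on each side---which is the fact you need, but not what you stated. Finally, the decomposition $y=y_{\PI}+c$ with $y_{\PI}$ a purely infinite \emph{predecessor of $x$} is not automatic and requires justification.
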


The significance of cases (i)--(iii) accrues from the fact that for an arbitrary surreal number $x$, 
\[
\exp (x)\ =\ \exp(x_P)\cdot \exp(x_R)\cdot \exp(x_I)
\]
where $x_P$, $x_R$ and $x_I$ are the purely infinite, real and infinitesimal components of $x$, respectively. 

From an algebraic point of view, it is already clear from Proposition~\ref{Gonshor 1986} (i)--(ii) what $\exp(x)$ is for real and infinitesimal values of $x$. To shed further algebraic light on $\exp(x)$ when $x$ is purely infinite we turn to the following additional result of Gonshor~\cite[Theorem 10.9]{GO}

\begin{proposition}
\label{Gonshor 1986.1}
The restriction of $\exp$ to the class of purely infinite surreal numbers is an isomorphism of ordered groups onto $Lead_\No$.
\end{proposition}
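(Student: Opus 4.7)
The plan is to leverage the fact that $\exp\colon(\No,+)\to(\No^{>0},\cdot)$ is already an isomorphism of ordered groups---this being the content of Proposition~\ref{DE2} together with the definition of an exponential ordered field. The restriction of $\exp$ to the additive subgroup $\No_{\PI}\leq\No$ (the purely infinite surreals form an additive subgroup because their Conway names have only positive exponents, a condition preserved under addition and negation) is therefore automatically an injective, order-preserving group homomorphism into $(\No^{>0},\cdot)$. So the entire task reduces to identifying its image as $Lead_{\No}$.

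For the forward containment $\exp(\No_{\PI})\subseteq Lead_{\No}$, I would induct on the $\No$-tree-rank of $x$ using the specialized recursion from Proposition~\ref{Gonshor 1986}(iii):
\[
\exp(x)\ =\ \Big\{0,(\exp x^L)(x-x^L)^n\ \Big|\ (\exp x^R)/(x^R-x)^n\Big\},
\]
where $x^L,x^R$ range over purely infinite predecessors of $x$ in $\No$ less than and greater than $x$, respectively. By the inductive hypothesis, $\exp(x^L)=\omega^{y_L}$ and $\exp(x^R)=\omega^{y_R}$ are leaders. Since $\No_{\PI}$ is closed under subtraction, $x-x^L$ and $x^R-x$ are positive purely infinite surreals, whose Conway names have positive leading exponents $\alpha_L,\alpha_R$. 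Consequently the left option $\omega^{y_L}(x-x^L)^n$ has leading term $\omega^{y_L+n\alpha_L}$ with coefficient $1$, and dually on the right. Let $y(x)$ be the simplest surreal satisfying $y_L+n\alpha_L<y(x)<y_R-n\alpha_R$ for all valid choices. Invoking the genetic formula
\[
\omega^{y(x)}\ =\ \Big\{0,m\omega^{y(x)^L}\ \Big|\ \omega^{y(x)^R}/2^m\Big\}
\]
from \S\ref{sec:theorems}, I would then compare the two cut representations via Proposition~\ref{Tree 1}(iii): the left predecessors of $y(x)$ are cofinal with $\{y_L+n\alpha_L\}$ by definition of $y(x)$, and each exp-option $\omega^{y_L}(x-x^L)^n$ is bounded above by a leader-option $m\omega^{y(x)^{L'}}$ (and vice versa) using that $(x-x^L)^n$ dominates every natural number. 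This yields $\exp(x)=\omega^{y(x)}\in Lead_{\No}$.

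For the reverse containment $Lead_{\No}\subseteq\exp(\No_{\PI})$, I would induct on the tree-rank of $y$: given $\omega^y$, by the inductive hypothesis each $\omega^{y^L}$, $\omega^{y^R}$ has a purely infinite preimage $\xi^L,\xi^R$ under $\exp$. Since $(\No_{\PI},<,<_s)$ is a lexicographically ordered full binary tree (Lemma~\ref{lemma 2}), the cut $(\{\xi^L\},\{\xi^R\})$ has a simplest realizer $x\in\No_{\PI}$. The forward containment gives $\exp(x)=\omega^{y(x)}$, and a straightforward matching of cuts (noting that $g\colon x\mapsto y(x)$ is order-preserving) yields $y(x)=y$, so $\log(\omega^y)=x\in\No_{\PI}$.

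The hardest step is the cofinality verification in the second paragraph: the exp-options $\omega^{y_L}(x-x^L)^n$ and the leader-options $m\omega^{y(x)^L}$ have different functional forms, and the leading exponent $\alpha_L$ of $x-x^L$ depends on the specific purely infinite predecessor $x^L$ in a way governed by the Conway expansion of $x$. Matching these sets cofinally requires, in one direction, dominating any exp-option by a suitable $m\omega^{y(x)^{L'}}$---which uses that $\{y_L+n\alpha_L\}$ is cofinal with the left predecessors of $y(x)$ by Proposition~\ref{Tree 1}(iii)---and, in the other, dominating each $m\omega^{y(x)^{L'}}$ by an exp-option of the form $\omega^{y_L}(x-x^L)^n$ with $n$ large.
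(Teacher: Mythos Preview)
The paper does not prove this proposition; it is quoted as Gonshor's result with a citation to~\cite[Theorem 10.9]{GO} and no argument is given. So there is no ``paper's own proof'' to compare your proposal against.

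That said, your plan is essentially the route Gonshor himself takes: one defines (by recursion on purely infinite simplicity) a map $g$ with $\exp(x)=\omega^{g(x)}$, and the substance of the argument is the mutual cofinality/coinitiality check between the options in Proposition~\ref{Gonshor 1986}(iii) and the options in the genetic definition of $\omega^{g(x)}$. Two small points worth tightening in your sketch: (1) the leading coefficient of $x-x^L$ need not be $1$, so $(\exp x^L)(x-x^L)^n$ only lies in the Archimedean class of $\omega^{y_L+n\alpha_L}$ rather than having that as its leading term with coefficient $1$---this is harmless for the cofinality argument but should be stated correctly; (2) in the reverse direction you appeal to Lemma~\ref{lemma 2}, but that lemma is proved \emph{after} this proposition in the paper's logical order (indeed, in Gonshor the analysis of $\No_{\PI}$ as a tree is intertwined with the proof of Theorem 10.9), so if you want a self-contained argument you should check that the existence of a simplest purely infinite realizer does not circularly depend on the result you are proving. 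Your identification of the cofinality step as the crux is accurate; that is where Gonshor spends the bulk of the work.
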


In addition to its inductively defined exponential function $\exp$, Norton and Kruskal independently provided inductive definitions of the inverse function $\log$, but thus far only an inductive definition of $\log$ for surreals of the form $\omega^\gamma$ has appeared in print. Nevertheless, since each positive surreal $x$, written in normal form, has a unique decomposition of the form 
\[
x\ =\ \omega^\gamma r(1+\epsilon),
\]
where $\omega^\gamma$ is a leader, $r$ is a positive member of $\R$ and $\epsilon$ is an infinitesimal, $\log(x)$ may be obtained for an arbitrary positive surreal $x$ from the equation 
\[
\log(x)\ =\ \log(\omega^\gamma)+\ln(r)+ \sum_{k=1}^{\infty } \frac{ (-1)^{k-1} \epsilon^{k}}{k},
\]
where $\log(\omega^\gamma)$ is inductively defined by 
\[
\big \{ \log(\omega^{\gamma^{L}})+n,\, \log(\omega ^{\gamma^R})-\omega ^{\frac{\gamma^R-\gamma}{n}}\, \big|\, \log(\omega ^{\gamma^R})-n,\, \log(\omega ^{\gamma^L})+\omega ^{\frac{\gamma- \gamma^L}{n}} \big\},
\]
where $\gamma^L$ and $\gamma^R$ range over the left and right predecessors of $\gamma$, respectively, and $n$ ranges over the positive integers.

 The logarithm of a leader can be described more explicitly using Gonshor's map $h:\No \to \No^{>0}$, defined as follows~\cite[page 172]{GO}:
\[
h(s) \ =\ \Big\{0,\, h(s^L)\, \Big|\, h(s^R),\, \frac{1}{k}\omega^{s}\Big\}
\]
where $s^L$ ranges over $L_{s(x)}$, where $s^R$ ranges over $R_{s(x)}$, and where $k$ ranges over the positive integers. Note that $h$ is strictly increasing and that $h(s) \prec \omega^s$ for each $s$. The map $h$ is related to logarithms by the following result of Gonshor~\cite{GO}:

\begin{proposition}
\label{Gonshor 1986.2}
Let $\gamma = \sum_{\alpha<\beta} r_\alpha \omega^{s_\alpha}\in \No$. Then
\[
\log \omega^{\gamma}\ =\ \sum_{\alpha<\beta} r_\alpha \omega^{h(s_\alpha)}.
\]
In particular, we have $\log \omega^{\omega^s}= \omega^{h(s)}$ for each $s \in \No$.
\end{proposition}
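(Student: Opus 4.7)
The strategy is to establish the particular case $\log\omega^{\omega^s}=\omega^{h(s)}$ first, by transfinite induction on the tree-rank of $s$, and then derive the general normal-form identity by invoking the additivity of the map $y\mapsto\log\omega^y$, which is a group isomorphism from $(\No,+)$ onto the additive group of purely infinite surreals by Proposition~\ref{Gonshor 1986.1}.

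For the particular case, I would substitute $y=\omega^s$ into the $\log\omega^y$ recursion, using the leader recursion $\omega^s=\{0,n\omega^{s^L}\mid\frac{1}{2^n}\omega^{s^R}\}$ to supply cofinal and coinitial representative classes for the predecessors of $\omega^s$ (which suffices by Proposition~\ref{Tree 1}). The inductive hypothesis combined with additivity gives $\log\omega^{n\omega^{s^L}}=n\omega^{h(s^L)}$ and $\log\omega^{\frac{1}{2^m}\omega^{s^R}}=\frac{1}{2^m}\omega^{h(s^R)}$, so the log recursion produces a left option class containing $n$, $n\omega^{h(s^L)}+n$, and $\frac{1}{2^m}\omega^{h(s^R)}-\omega^{(\frac{1}{2^m}\omega^{s^R}-\omega^s)/n}$, and a right option class containing $\frac{1}{2^m}\omega^{h(s^R)}-n$ and $n\omega^{h(s^L)}+\omega^{(\omega^s-n\omega^{s^L})/n}$. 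These are then to be compared with the canonical left and right options of $\omega^{h(s)}$ obtained by feeding $h(s)=\{0,h(s^L)\mid h(s^R),\frac{1}{k}\omega^s\}$ into the leader recursion---namely $\{0,n',n'\omega^{h(s^L)}\}$ on the left and $\{\frac{1}{2^{n'}}\omega^{h(s^R)},\frac{1}{2^{n'}}\omega^{\frac{1}{k}\omega^s}\}$ on the right---and shown to be mutually cofinal/coinitial with them. Proposition~\ref{Tree 1} then forces $\log\omega^{\omega^s}=\omega^{h(s)}$.

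For the general statement, the identity $\omega^{a+b}=\omega^a\omega^b$ extends to transfinite sums via the Hahn-field structure on $\No$, giving $\omega^\gamma=\prod_{\alpha<\beta}\omega^{r_\alpha\omega^{s_\alpha}}$ for $\gamma$ in normal form; applying $\log$, together with $\R$-linearity of $y\mapsto\log\omega^y$ (obtained from additivity, divisibility, and a cofinality argument for real coefficients) and the particular case, then yields $\log\omega^\gamma=\sum_{\alpha<\beta}r_\alpha\omega^{h(s_\alpha)}$. The main obstacle is the cofinality/coinitiality verification in the inductive step, especially showing that the term $\log\omega^{y^L}+\omega^{(y-y^L)/n}$ at $y^L=0$ (which equals $\omega^{\omega^s/n}$) supplies, up to coinitiality, the new right predecessor $\frac{1}{2^{n'}}\omega^{\frac{1}{k}\omega^s}$ of $\omega^{h(s)}$ that descends from neither $s^L$ nor $s^R$; carefully tracking the Archimedean dominance relations to confirm that no strictly simpler element slips into the critical interval is the technical heart of the argument.
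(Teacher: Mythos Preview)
The paper does not supply its own proof of this proposition; it is attributed to Gonshor~\cite{GO} and simply quoted. So there is no paper-proof to compare against---your sketch is being measured against Gonshor's original argument.

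Your outline of the particular case $\log\omega^{\omega^s}=\omega^{h(s)}$ by induction on $s$ is on the right track and matches Gonshor's strategy (his Theorems~10.11--10.13). You correctly identify the cofinality/coinitiality check as the technical core, and your list of option classes is accurate. One small point: the additivity of $y\mapsto\log\omega^y$ also needs the identity $\omega^{a+b}=\omega^a\omega^b$ (Gonshor, Theorem~5.4), not just Proposition~\ref{Gonshor 1986.1}.

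The passage to the general normal form, however, has a genuine gap. You write $\omega^\gamma=\prod_{\alpha<\beta}\omega^{r_\alpha\omega^{s_\alpha}}$ and then ``apply $\log$''. But transfinite products are not defined in $\No$, and there is no general principle letting you push an additive homomorphism through a transfinite Conway normal-form sum; additivity and $\Q$-linearity only give the result for \emph{finite} sums with rational coefficients, and the ``cofinality argument for real coefficients'' you allude to does not extend to sums of ordinal length. Gonshor avoids this entirely: he \emph{defines} a map $\phi$ on normal forms by $\phi\big(\sum r_\alpha\omega^{s_\alpha}\big):=\sum r_\alpha\omega^{h(s_\alpha)}$ (well-defined since $h$ is strictly increasing into $\No^{>0}$, so the image is purely infinite), and then verifies directly---using the inductive formula for $\exp$ on purely infinite elements from Proposition~\ref{Gonshor 1986}(iii)---that $\exp\phi(\gamma)=\omega^\gamma$. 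Injectivity of $\exp$ then forces $\phi(\gamma)=\log\omega^\gamma$. You should restructure the second half along these lines.
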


\subsection{$s$-hierarchical ordered exponential fields} \label{SH}
Following~\cite[Definition 3]{EH5}, $(A,+, \cdot, <,<_{s},0,1) $ is said to be an {\it $s$-hierarchical ordered field} if (i) $(A,+, \cdot, <,0, 1)$ is an ordered field; (ii) $(A,<,<_{s})$ is a lexicographically ordered binary tree; and (iii) for all $x,y\in A$ 
\[
x+y\ =\ \{ x^{L}+y,\, x+y^{L} \, | \, x^{R}+y,\, x+y^{R}\} 
\]
and
\[
xy\ =\ \{x^{L}y+xy^{L}-x^{L}y^{L},\, x^{R}y+xy^{R}-x^{R}y^{R} \,| \,x^{L}y+xy^{R}-x^{L}y^{R},\, x^{R}y+xy^{L}-x^{R}y^{L}\},
\]
where $x^L$, $x^R$, $y^L$ and $y^R$ are understood to range over the members of $L_{s(x)}$, $R_{s(x)}$, $L_{s(y)}$ and $R_{s(y)}$, respectively. 

Extending this idea, we will say $(A,+, \cdot, \exp, <,<_{s},0,1) $ is an \textbf{$s$-hierarchical ordered exponential field} if (i) $(A,+, \cdot, \exp, <,0,1) $ is an ordered exponential field; (ii) $(A,+, \cdot, <,<_{s},0,1) $ is an $s$-hierarchical ordered field; and (iii) for all $x \in A$
\[
\exp (x)\ =\ \Big\{ 0,\, (\exp x^L) [x-x^L]_{n}, \, (\exp x^R) [x-x^R]_{2n+1}\, \Big|\, \frac{\exp x^L}{[x^L-x]_{2n+1}},\, \frac{\exp x^R}{[x^R-x]_{n}}\Big\},
\]
where $x^L$ and $x^R$ range over $L_{s(x)}$ and $R_{s(x)}$ respectively, $n$ ranges over $\N$, and $[y]_{n} =:1+y+\frac{y^2}{2!}+...+\frac{y^n}{n!}$ for all surreal $y$, it being understood that expressions containing terms of the form $[y]_{2n+1}$ are to be included only when $[y]_{2n+1} >0$. 

In virtue of the definitions of Conway's field operations and the definition of the Kruskal-Gonshor surreal exponential function, {\bf No} is of course an $s$-hierarchical ordered exponential field. Moreover, extending the argument for initial subfields of {\bf {No}} from~\cite[page 1236]{EH5}, it is evident that 

\begin{proposition}
Every initial exponential subfield of {\bf {No}} is itself an $s$-hierarchical ordered exponential field.
\end{proposition}

As such, much as the recursive unfolding of the $s$-hierarchical ordered field of surreal numbers generates a recursive unfolding of all the initial subfields of the surreals, the recursive unfolding of the $s$-hierarchical ordered exponential field $({\bf No}, \exp)$ gives rise to a recursive unfolding of all the initial exponential subfields of $({\bf No}, \exp)$, the characterization of which is one of the central focuses of the paper. 

Following~\cite[Definition 7]{EH5}, a mapping $f: A \rightarrow A'$ between two lexicographically ordered binary trees is said to be \textbf{$s$-hierarchical} if for all $x = \{L\, |\, R\} \in A$, $f(x) = \{f (L)\, |\, f (R)\}$. If, in addition, $A$ and $A'$ are $s$-hierarchical structures (ordered fields; ordered groups; ordered vector spaces; etc.) and the mapping is an embedding (of ordered fields; ordered groups; ordered vector spaces; etc) it is said to be an \textbf{$s$-hierarchical embedding}. 

If $f: A \rightarrow A'$ is an $s$-hierarchical mapping, then $f$ is the unique initial ordered tree embedding of $A$ into $A'$~\cite[Lemma 1]{EH5}. Moreover, an inductive argument shows that an $s$-hierarchical mapping of $s$-hierarchical ordered fields is an $s$-hierarchical embedding~\cite[Lemma 2]{EH5}. By combining the inductive argument for $s$-hierarchical ordered fields with a similar inductive argument for $\exp x$, in which the induction hypothesis is that the desired property holds for all the $\exp x^L$'s and $\exp x^R$'s, one may establish for $s$-hierarchical ordered exponential fields the analog of the just-said result for $s$-hierarchical ordered fields. Collecting these results together, we have:

\begin{proposition}
\label{Prop. SH}
(i) Every $s$-hierarchical mapping between $s$-hierarchical ordered exponential fields
is an $s$-hierarchical embedding; (ii) if $f: A \rightarrow A'$ and $g: A \rightarrow A'$ are $s$-hierarchical mappings,
then $f = g$ and $f(A)$ is initial.
\end{proposition}

Let $A$ be an $s$-hierarchical ordered exponential field. Extending the concepts of universality and maximality for $s$-hierarchical ordered fields~\cite[page 1239]{EH5} to $s$-hierarchical ordered exponential fields, $A$ will be said to be \textbf{universal} if for each $s$-hierarchical ordered exponential field $B$, there is an $s$-hierarchical embedding $f: B \rightarrow A$, and $A$ will be said to be \textbf{maximal} if there is no $s$-hierarchical ordered exponential field that properly contains $A$ as an initial exponential subfield.

In virtue of Proposition~\ref{Prop. SH} and the fact that there is an $s$-hierarchial mapping of every lexicographically ordered binary tree into $({\bf {No}},<,<_{s})$ (see the proof of~\cite[Theorem 5]{EH5}), we also have:

\begin{proposition}
{\bf {No}} is (up to isomorphism) the unique universal and the unique maximal $s$-hierarchical ordered exponential field.
\end{proposition}

\section{Preliminaries III: ordered abelian groups}\label{sec:P2}
Let $\Gamma$ be an ordered abelian group. For $x,y \in \Gamma$, we set
\begin{align*}
x \preceq y\ &:\Longleftrightarrow\ |x|<n|y|\text{ for some }n \in\N\\
x \prec y\ &:\Longleftrightarrow\ n|x|<|y|\text{ for all }n \in\N\\
x \asymp y\ &:\Longleftrightarrow\ x\preceq y\text{ and }y \preceq x\text{ (equivalently, if $x \preceq y$ and $x\not\prec y$).}
\end{align*}
Then $\asymp$ is an equivalence relation on $\Gamma\setminus \{0\}$ and the equivalence classes corresponding to $\asymp$ are called the \textbf{Archimedean classes of $\Gamma$}.
We say that $\Gamma$ is \textbf{Archimedean} if $\Gamma\setminus \{0\}$ consists of exactly one Archimedean class.

\subsection{Hahn groups}
Let $\R((t^S))_{\On}$ be the ordered group of power series (defined \'a la Hahn~\cite{H}) consisting of \emph{all} formal power series of the form $\sum_{\alpha < \beta} r_\alpha t^{s_\alpha}$ where $(s_\alpha)_{\alpha < \beta \in \On}$ is a possibly empty descending sequence of elements of an ordered class $S$ and $r_\alpha \in \R \setminus \{0\}$ for each $\alpha < \beta$. 
When $S$ is a set, then $\R((t^S))_{\On}$ is a set as well, and it is often simply written $\R((t^S))$. When $S$ is a proper class, then $\R((t^S))_{\On}$ is also a proper class. We call $S$ the \textbf{value class of $\R((t^S))_{\On}$}. In the literature, the appellation ``Hahn group'' is usually reserved for those structures $\R((t^S))_{\On}=\R((t^S))$, where $S$ is a set. However, we refer to $\R((t^S))_{\On}$ as a \textbf{Hahn group} whether $S$ is a set or a proper class.\footnote{By a classical result of Hahn~\cite{H}, when $S$ is a set, the Hahn groups coincide with the {\bf Archimedean complete} ordered abelian groups, i.e. the ordered abelian groups $G$ that admit no proper extension to an ordered abelian group $G'$ such that for each $x \in G' \setminus G$ there is a $y \in G$ where $x \asymp y$. When $S$ is a proper class, however, the Hahn group $\R((t^S))_{\On}$ (as defined above) is Archimedean complete if and only if $S$ has no descending subclass $(s_\alpha)_{\alpha < \On}$. Since $\No$ contains such subclasses, the surreals, whose ordered additive group is isomorphic to $\R((t^{\No}))_{\On}$ (see \S\ref{sec:SLHF}), is not Archimedean complete. In this respect, the ordered group of surreals does not have the full structure of a classical Hahn group.}

For a subclass $B \subseteq S$, we let $t^B$ denote the class $\{t^b:b \in B\}$. A subgroup $\Gamma$ of $\R((t^S))_{\On}$ is said to be \textbf{cross sectional} if $\{t^s : s \in S \} \subseteq \Gamma$. 

Let $y=\sum_{\alpha < \beta} r_\alpha t^{s_\alpha}$ be an element of $\R((t^S))_{\On}$. We let $\supp(y):=\{s_\alpha:\alpha<\beta\}\subseteq S$ denote the \textbf{support of $y$}, so $\supp(y)$ is empty if and only if $y = 0$. We also set $c(y):= s_0 = \max\supp(y)$, so $c(y)$ is the unique element of $S$ with $y \asymp t^{c(y)}$. An element $x \in \R((t^S))_{\On}$ is said to be a \textbf{truncation of $y$} if $x = \sum_{\alpha < \sigma} r_\alpha t^{s_\alpha}$ for some $\sigma \leq \beta$. Given $s \in S$, let $\sigma\leq \beta$ be the least ordinal such that $s_\sigma\leq s$, and let $y_{>s}$ be the truncation $\sum_{\alpha < \sigma} r_\alpha t^{s_\alpha}$. Then $y-y_{>s} \preceq t^s$, and $y - y_{>s} \asymp t^s$ if and only if $s \in \supp(y)$. A subgroup $\Gamma$ of $\R((t^S))_{\On}$ is said to be \textbf{truncation closed} if every truncation of every member of $\Gamma$ is itself a member of $\Gamma$. Let $\Gamma$ be a truncation closed subgroup of $\R((t^S))_{\On}$. We define the \textbf{$\Gamma$-truncation of $y$}, denoted $y_\Gamma$, as follows:
\begin{enumerate}
\item if $\sum_{\alpha \leq \sigma} r_\alpha t^{s_\alpha}$ belongs to $\Gamma$ for each $\sigma < \beta$, then we let $y_\Gamma := y$;
\item otherwise, let $\beta_0<\beta$ be least such that $\sum_{\alpha \leq \beta_0} r_\alpha t^{s_\alpha}$ does not belong to $\Gamma$. Then we let $y_\Gamma:= \sum_{\alpha < \beta_0} r_\alpha t^{s_\alpha}$.
\end{enumerate}
Note that $y_\Gamma$ may or may not belong to $\Gamma$. Of course, $y_\Gamma \in \Gamma$ if $y \in \Gamma$. If the second case holds and $\beta_0$ is a successor ordinal, then $y_\Gamma \in \Gamma$. If $\Gamma = \R((t^{S'}))_{\On}$ for some subclass $S' \subseteq S$, then $y_\Gamma$ always belongs to $\Gamma$. 

\begin{lemma}\label{samecutsametrunc}
Let $\Gamma$ be a truncation closed divisible subgroup of $\R((t^S))_{\On}$ and let $y,y' \in \R((t^S))_{\On}$. If $y$ and $y'$ realize the same cut over $\Gamma$, then $y_\Gamma = y'_\Gamma$.
\end{lemma}
\begin{proof}
Suppose that $y_\Gamma \neq y'_\Gamma$ and let $\sum_{\alpha<\beta} r_\alpha t^{s_\alpha}$ be the greatest common truncation of $y$ and $y'$ in $\Gamma$. Without loss of generality, we may assume that $\sum_{\alpha<\beta} r_\alpha t^{s_\alpha}$ is \emph{not} equal to $y_\Gamma$, so there is $r_\beta \in \R\setminus \{0\}$ and $s_\beta \in S$ such that $z:=\sum_{\alpha\leq\beta} r_\alpha t^{s_\alpha}\in \Gamma$ is a truncation of $y$ but not $y'$. Note that $r_\beta t^{s_\beta} = z-\sum_{\alpha<\beta} r_\alpha t^{s_\alpha}$ belongs to $\Gamma$. We have
\[
y- z\ \prec\ r_\beta t^{s_\beta},\qquad y'- z\ \succeq\ r_\beta t^{s_\beta}.
\]
If $y-z$ and $y'-z$ have opposite signs, then $z$ is between $y$ and $y'$. Suppose that $y-z$ and $y'-z$ are both positive. Take $n >0$ such that $y'-z> \frac{r_\beta}{n} t^{s_\beta}$. Then $y<z+\frac{r_\beta}{n} t^{s_\beta}<y'$. Since $z$ and $r_\beta t^{s_\beta}$ belong to $\Gamma$ and since $\Gamma$ is divisible, we see that $z+\frac{r_\beta}{n} t^{s_\beta}$ belongs to $\Gamma$, so $y$ and $y'$ do not realize the same cut over $\Gamma$. The case that $y-z$ and $y'-z$ are both negative is similar.
\end{proof}

\subsection{Hahn spaces}
Let $\k$ be an Archimedean ordered field, that is, an ordered field with an Archimedean underlying ordered additive group. We uniquely identify $\k$ with a subfield of $\R$. Via this identification, we view $\R((t^S))_{\On}$ as an ordered $\k$-vector space when convenient (such as in Lemma~\ref{notsoclose} below). 

\begin{definition}
A \textbf{Hahn space over $\k$} is an ordered $\k$-vector space $\Gamma$ such that for all $a,b \in \Gamma \setminus \{0\}$ with $a \asymp b$, there is $r \in \k$ with $a-rb \prec b$. 
\end{definition}

See~\cite[Section 2.3]{ADH} for more information on Hahn spaces. Any ordered $\R$-vector space, including the Hahn group $\R((t^S))_{\On}$, is a Hahn space over $\R$.

\begin{lemma}\label{notsoclose}
Let $\Delta \subseteq \Gamma\subseteq \R((t^S))_{\On}$ be $\k$-vector spaces and suppose that $\Gamma$ is a Hahn space over $\k$ and $\Delta$ is truncation closed. Let $y \in \Gamma\setminus \Delta$, and let $z \in \Delta$ be a truncation of $y$. Then $z = y_\Delta$ if and only if $y-z \not\asymp a$ for all $a \in \Delta$.
\end{lemma}
\begin{proof}
Write $y = \sum_{\alpha < \beta} r_\alpha t^{s_\alpha} \in \Gamma$ and take $\beta_0 < \beta$ with $z = \sum_{\alpha < \beta_0} r_\alpha t^{s_\alpha} \in \Delta$. If $z \neq y_\Delta$, then $z + r_{\beta_0} t^{s_{\beta_0}}= \sum_{\alpha \leq \beta_0} r_\alpha t^{s_\alpha}$ belongs to $\Delta$ as well, so $ y-z \asymp r_{\beta_0} t^{s_{\beta_0}} \in \Delta$. Conversely, suppose that $y - z \asymp a$ for some $a \in \Delta$. Since $\Gamma$ is a Hahn space over $\k$, there is some $r \in \k$ with $y-z-ra \prec a$. Thus, $ra = r_{\beta_0} t^{s_{\beta_0}}+ \epsilon$, where $\epsilon \prec t^{s_{\beta_0}}$. Since $\Delta$ is truncation closed, we see that $r_{\beta_0} t^{s_{\beta_0}} \in \Delta$, so $z + r_{\beta_0} t^{s_{\beta_0}}$ is a truncation of $y$ contained in $\Delta$ which strictly extends $z$. Thus, $z \neq y_{\Delta}$.
\end{proof}

\subsection{Initial subgroups and subspaces of $\No$}
For $y \in \No$ with $y \neq 0$, we let $c(y)$ denote the unique element of $\No$ such that $y \asymp \omega^{c(y)}$. This agrees with our Hahn group definition above, where we identify $\No$ with the Hahn group $\R((\omega^\No))_{\On}$. This also agrees with the map $c$ defined by Gonshor~\cite{GO}. Let $\Gamma$ be an initial divisible subgroup of $\No$. The \textbf{value class of $\Gamma$} is the class
\[
S\ :=\ \{s \in \No: \omega^s \in \Gamma\}.
\]
Then $S$ is an initial subclass of $\No$ and $\Gamma$ is a truncation closed, cross sectional subgroup of $\R((\omega^S))_{\On}$~\cite[Theorem 5.1]{EK}.
Thus, each $\gamma \in \No$ has a $\Gamma$-truncation $\gamma_\Gamma$.

Let $\k$ be an Archimedean ordered field. Our identification of $\k$ with a subfield of $\R$ allows us to view $\k$ as an \emph{initial} subfield of $\No$. In fact, this identification $\k \hookrightarrow\R\subseteq \No$ is the \emph{unique} initial ordered field embedding $\k\to \No$~\cite[Theorem 8]{EH5}. We view $\No$ as a $\k$-vector space via this embedding. By Proposition~\ref{KV}, we get that every ordered $\k$-vector space $\Gamma$ admits an initial $\k$-linear embedding into $\No$. For future use, we record the induction step in the proof of this result~\cite[pages 1241-1242]{EH5}:

\begin{lemma}\label{newbasiselem}
Let $\Gamma$ be an initial ordered $\k$-vector subspace of $\No$ and let $x \in \No\setminus \Gamma$. Suppose that $x$ is the simplest element realizing a cut over $\Gamma$. Then the $\k$-vector subspace $\Gamma+\k x \subseteq\No$ is initial. In particular, if $S$ is the value class of $\Gamma$ and if $s \in \No\setminus S$ is the simplest element realizing a cut over $S$, then $\Gamma+\k \omega^s$ is initial.
\end{lemma}

\section{Preliminaries IV: Hahn fields}\label{sec:P3}
Let $K$ be an ordered field. The Archimedean classes of $K$ as well as the relations $\preceq$, $\prec$, and $\asymp$ are defined with respect to the underlying ordered additive group of $K$. We say that an element $x \in K$ is \textbf{infinite} if $x \succ 1$ and \textbf{infinitesimal} if $x \prec 1$. A \textbf{cross section} for $K$ is a multiplicative subgroup $\mfM \subseteq K^{>0}$ such that for each $x \in K\setminus \{0\}$ there is exactly one $\fm \in \mfM$ with $x \asymp \fm$.

Let $\Gamma$ be an ordered abelian group. Then the Hahn group $\R((t^\Gamma))_{\On}$ is in fact a \textbf{Hahn field}~\cite{H} when multiplication is defined \`a la polynomials with $t^{\gamma_1} t^{\gamma_2} = t^{\gamma_1 + \gamma_2}$ for all $\gamma_1, \gamma_2 \in \Gamma$. We call $\Gamma$ the \textbf{value group of $\R((t^\Gamma))_{\On}$}. Like the nonzero surreals written in normal form, each element $x$ of a Hahn field is the sum of three easily recognizable components: the \textbf{purely infinite} component of $x$, whose terms solely have positive exponents; the \textbf{real} component of $x$, whose sole term (if it is not the empty sum) has exponent $0$; and the \textbf{infinitesimal} component of $x$, whose terms solely have negative exponents. Again, as in the surreal case, $0$, being the empty sum, may be regarded as purely infinite. We remark that the purely infinite component of $x$ coincides with the truncation $x_{>0}$, where $0$ is the identity element of the value group $\Gamma$, and that $x$ is itself purely infinite just in case $x = x_{>0}$. As we noted above, following Proposition~\ref{sur5} in the surreal case, infinite sums of the form
\[
\sum_{n=0}^{\infty }r_n\epsilon^n
\]
are well-defined in Hahn fields for infinitesimal elements $\epsilon \in \R((t^\Gamma))_{\On}$, in virtue of a classical result of Neumann~\cite{N}.

Let $K$ be a truncation closed, cross sectional subfield of $\R((t^\Gamma))_{\On}$. The set $\R_K= \{r\ \in \R : rt^0 \in K \}$ is an Archimedean ordered field, which we will call the \textbf{coefficient field} of $K$. Note that $\R_K$ is isomorphic to the residue class field of $K$ with respect to the Archimedean valuation.\footnote{The traditional definition of the residue class field does not work in NBG if $K$ is a proper class. For a suitable modification applicable to sets and proper classes, see~\cite[page 1253]{EH5}.} The multiplicative subgroup $t^\Gamma\subseteq K^{>0}$ is a canonical cross section for $K$.

The following result, which is employed in the proof of Proposition~\ref{IF}, is critical in the proof of the main theorem.

\begin{proposition}[\cite{EH5}, Theorem 18;~\cite{EH8}, Theorem 14]
\label{Ehrlich initial 2}
If $K$ is a truncation closed, cross sectional subfield of a Hahn field $\R((t^\Gamma))$ and $\imath:\Gamma\rightarrow \No$ is an initial group embedding, then the mapping that sends 
\[
\sum_{\alpha < \beta} r_\alpha t^{\gamma_\alpha}\ \in\ K
\]
to the surreal number having Conway name
\[
\sum_{\alpha < \beta} r_\alpha \omega^{\imath(\gamma_\alpha)}
\]
is an initial embedding of $K$ into $\No$.
\end{proposition}

If $K$ is an initial subfield of $\No$, then by~\cite[Theorem 18]{EH5}, the class
\[
\Gamma\ :=\ \{\gamma \in \No:\omega^\gamma \in K\}
\]
is an initial subgroup of $\No$, which we call the \textbf{value group of $K$}, and $K$ is a truncation closed, cross sectional subgroup of $\R((\omega^\Gamma))_{\On}$.

\section{Transserial Hahn fields}\label{sec:TF}
A \textbf{logarithm} on an ordered field $K$ is an ordered group embedding $\log:K^{>0}\to K$ which satisfies $\log y\leq y-1$ for $y \in K^{>0}$. An \textbf{ordered logarithmic field} is an ordered field equipped with a logarithm. We denote the functional inverse of $\log$ by $\exp$, where it is defined. If $K= (K,\log)$ is an ordered logarithmic field and $\log$ is surjective onto $K$, then $K$ is called an \textbf{ordered exponential field}. 

\begin{definition}
A \textbf{logarithmic Hahn field} is an ordered Hahn field $\R((t^\Gamma))_{\On}$ equipped with a logarithm $\log$ which satisfies the following axioms:
\begin{enumerate}
\item $\log$ extends the natural logarithm on $\R^{>0}$;
\item $\log t^\gamma$ is purely infinite for each $\gamma \in \Gamma$;
\item $\log(1+\epsilon) = \sum_{n=1}^\infty (-1)^{n-1}\epsilon^n/n$ for all infinitesimal $\epsilon \in \R((t^\Gamma))_{\On}$.
\end{enumerate}
\end{definition}

\begin{remark}\label{rem-extendlog}
Let $\R((t^\Gamma))_{\On}$ be an ordered Hahn field, and let $\log : t^\Gamma\to \R((t^\Gamma))$ be an ordered group embedding. If $\log t^\gamma$ is purely infinite and $\log t^\gamma \prec t^\gamma$ for each $\gamma \in \Gamma^{>0}$, then there is a unique logarithm on $\R((t^\Gamma))_{\On}$ which extends $\log$ and makes $\R((t^\Gamma))_{\On}$ into a logarithmic Hahn field. This logarithm (which we also denote by $\log$) is given by setting
\[
\log\big(rt^\gamma(1+\epsilon)\big)\ :=\ \ln r + \log t^\gamma+ \sum_{n=1}^{\infty } (-1)^{n-1} \epsilon^n/n
\]
for $r\in \R^{>0}$, $\gamma \in \Gamma$, and $\epsilon \prec 1$.
\end{remark}

\begin{definition}[Schmeling, Definition 2.2.1~\cite{SC}]\label{transseriesdef}
A \textbf{transserial Hahn field} is a logarithmic Hahn field $\R((t^\Gamma))_{\On}$ which satisfies the additional axiom:
\begin{enumerate}
\item[(T4)] For every sequence $(\gamma_n)_{n \in\N}$ in $\Gamma$ such that $\gamma_{n+1}\in \supp(\log t^{\gamma_n})$ for each $n$, there is an $n_0\in \N$ such that for all $n\geq n_0$, we have
\[
\big|\log t^{\gamma_n} - (\log t^{\gamma_n})_{>\gamma_{n+1}}\big|\ =\ t^{\gamma_{n+1}}.
\]
\end{enumerate}
\end{definition}

\begin{remark}\label{rem-T4}
Schmeling introduced axiom (T4) so that certain \emph{compositions} of transseries are well-defined; see~\cite[Remark 2.2.3]{SC}. For us, however, the utility of axiom (T4) comes from its connection to the existence of \emph{$\Delta$-atomic elements}, introduced in \S\ref{subsec-deltapaths}. Lemmas~\ref{existsatomic} and~\ref{lem-existsatomic2} clarify the connection between axiom (T4) and the existence of these elements.
\end{remark}

\begin{remark}\label{rem-T4.2}
Schmeling uses the terminology \emph{transseries field} instead of \emph{transserial Hahn field}. While Schmeling's transseries fields are always sets, we allow for transserial Hahn fields which are proper classes. Our terminology is chosen partially for this additional generality and partially to avoid any confusion that may arise by employing both ``transseries field'' and ``field of transseries'', which have distinct meanings.
\end{remark}

For the remainder of this section, we let $\T= \R((t^\Gamma))_{\On}$ be a logarithmic Hahn field. We use the notation $\T$ to bring ``transseries'' to mind, but much of this section does not require axiom (T4), so we do not assume this axiom until it is needed.

\begin{lemma}\label{lem-posinflogs}
Let $a \in\T$ be positive and infinite. Then $\log a$ is positive and infinite and $\log a \prec a$.
\end{lemma}
\begin{proof}
Let $n \in \N$ be given. We need to show that $n<\log a$ and that $n \log a < a$. To see that $n < \log a$, use that $a > \exp n\in \R$ and that $\log$ is strictly increasing, so $\log a > \log (\exp n) = n$. To see that $n \log a < a$, we first note that $\log a\leq a-1< a$, and we apply this to $a/(n+1)$ in place of $a$ to get
\[
(n+1)\big(\log a - \log(n+1)\big)\ =\ (n+1)\log\big(a/(n+1)\big)\ <\ (n+1)\big(a/(n+1)\big)\ = \ a.
\]
Since $ \log(n+1)\prec \log a$, we have $(n+1)\big(\log a - \log(n+1)\big)> n\log a$, so $n\log a <a$.
\end{proof}

\begin{lemma}\label{lem-boundedtobounded}
Let $u \in \T^{>0}$ with $u \asymp 1$. Then $\log u \preceq 1$. Moreover, for each $z \in \T$ with $z\preceq 1$, $\exp z$ is defined and $\exp z \asymp 1$.
\end{lemma}
\begin{proof}
Take $r \in \R^{>0}$ and $\epsilon \prec 1$ with $y = r(1+\epsilon)$. Then
\[
\log y \ =\ \ln r + \log(1+\epsilon).
\]
Since $\ln r \in \R$ and $\log(1+\epsilon)\prec 1$, we see that $\log y \preceq 1$. Now let $z \in \T$ with $z\preceq 1$. Take $r' \in \R$ and $\epsilon' \prec 1$ with $z = r'+\epsilon'$. Then $\exp r' = e^{r'} \in \R^{>0}$ and $\exp \epsilon' = \ \sum_{n=0}^\infty (\epsilon')^n/n!\asymp 1$, so $\exp z = e^{r'}\exp \epsilon' \asymp 1$.
\end{proof}

\begin{lemma}\label{lem-monomialinfinite}
Let $y \in \T^{>0}$ and suppose that $\log y$ is purely infinite. Then $y \in t^\Gamma$. 
\end{lemma}
\begin{proof}
Take $\gamma \in \Gamma$ and $u \in \T^{>0}$ with $u \asymp 1$ and $y = ut^\gamma$. Then 
\[
\log y\ =\ \log t^\gamma+ \log u.
\]
Since $\log y$ and $\log t^\gamma$ are purely infinite and $\log u \preceq 1$ by Lemma~\ref{lem-boundedtobounded}, we must have $\log u = 0$, so $u=1$ and $y = t^\gamma$.
\end{proof}

\begin{lemma}\label{lem-monomtoall}
Suppose that $\log(t^\Gamma)$ is a truncation closed subgroup of $\T$. Then $\log(\T^{>0})$ is also truncation closed.
\end{lemma}
\begin{proof}
Let $y \in \T^{>0}$ and take $\gamma \in \Gamma$ and $u \in \T^{>0}$ with $u \asymp 1$ and $y = ut^\gamma$. Then 
\[
\log y\ =\ \log t^\gamma+ \log u.
\]
Let $x$ be a truncation of $\log y$. Since $\log t^\gamma$ is purely infinite and $\log u \preceq 1$ by Lemma~\ref{lem-boundedtobounded}, we either have that $x$ is a truncation of $\log t^\gamma$ or that $x = \log t^\gamma+ z$, where $z \preceq 1$ is a truncation of $\log u$. In the first case, we have $x \in \log(t^\Gamma)$ by our assumption that $\log(t^\Gamma)$ is truncation closed. In the second case, Lemma~\ref{lem-boundedtobounded} gives that $z \in \log(\T^{>0})$, so $\log t^\gamma + z$ belongs to $\log(\T^{>0})$ as well. 
\end{proof}

\subsection{Subfields parametrized by subspaces of $\Gamma$}
Let $\k$ be an Archimedean ordered field. If $\log(t^\Gamma)$ is a $\k$-subspace of $\T$, then we view $\Gamma$ as an ordered $\k$-vector space, where $r\gamma$ is the unique element of $\Gamma$ with $\log t^{r\gamma} = r\log t^\gamma$ for $r \in \k$ and $\gamma \in \Gamma$. We will say that \textbf{$\Gamma$ is an ordered $\k$-vector space} to mean that $\log(t^\Gamma)$ is a $\k$-subspace of $\T$, where the operation of scalar multiplication on $\Gamma$ is assumed to be the operation described above. For the remainder of this section, we assume that $\Gamma$ is an ordered $\k$-vector space. It follows easily from Lemma~\ref{lem-boundedtobounded} that then $\log(\T^{>0})$ is a $\k$-subspace of $\T$. For $r \in \k$ and $y \in \T^{>0}$, we let $y^r$ be the unique element of $\T^{>0}$ with $\log y^r = r\log y$. Let $\Delta$ be a $\k$-subspace of $\Gamma$ and set $\T_\Delta:=\R((t^\Delta))_{\On} \subseteq \T$. Note that the $\T_{\Delta}$-truncation of $y$ lies in $\T_{\Delta}$ for each $y \in \T$. 

\begin{definition}
We say that $\Delta$ is a \textbf{logarithmic $\k$-subspace of $\Gamma$} if $ \log(t^\Delta)\subseteq \T_\Delta$. We say that $\Delta$ is an \textbf{exponential $\k$-subspace of $\Gamma$} if $\log(t^\Delta) =\log(t^\Gamma) \cap \T_\Delta$.
\end{definition}

\begin{lemma}\label{logsubspaceislogsubfield}
If $\Delta$ is a logarithmic $\k$-subspace of $\Gamma$, then $\log(\T_\Delta^{>0})\subseteq \T_\Delta$. If $\Delta$ is an exponential $\k$-subspace of $\Gamma$, then $\log(\T_\Delta^{>0})=\log(\T^{>0})\cap \T_\Delta$.
\end{lemma}
\begin{proof}
Suppose that $\Delta$ is a logarithmic $\k$-subspace of $\Gamma$, let $x \in \T_\Delta^{>0}$ and take $\delta \in \Delta$, $r \in \R^{>0}$, and $\epsilon \in \T_\Delta$ with $\epsilon \prec 1$ such that $x = rt^\delta(1+\epsilon)$. Then
\[
\log x \ = \ \log t^\delta + \ln r + \log(1+\epsilon).
\]
We have $\log t^\delta \in \T_\Delta$ by assumption and $\ln r \in \R\subseteq \T_\Delta$. We also have 
\[
\log(1+\epsilon) \ =\ \sum_{n=1}^{\infty } (-1)^{n-1} \epsilon^n/n\ \in\ \T_\Delta,
\]
since this sum exists in $\T$ and consists of elements from $\T_{\Delta}$.

Now suppose that $\Delta$ is an exponential $\k$-subspace of $\Gamma$ and let $y \in \log(\T^{>0})\cap \T_\Delta$. We need to show that $y\in \log(\T_{\Delta}^{>0})$. Take $r \in \R$ and $\epsilon \prec 1$ with $y = y_{>0}+r+\epsilon$. We have $e^r \in \R^{>0} \subseteq\T_{\Delta}^{>0}$, and we have 
\[
\exp \epsilon \ =\ \sum_{n =0 }^{\infty }\epsilon^n/n!\ \in\ \T_\Delta^{>0},
\]
since this sum exists in $\T^{>0}$ and consists of elements from $\T_{\Delta}$. Thus, $r$ and $\epsilon$ belong to $\log(\T_\Delta^{>0})$. Since $\log(\T^{>0})\cap \T_\Delta$ is a subgroup of $\T_{\Delta}$ containing $y$, $r$, and $\epsilon$, it follows that $y_{>0} \in \log(\T^{>0})\cap \T_\Delta$. Since $y_{>0}$ is purely infinite, Lemma~\ref{lem-monomialinfinite} gives that $y_{>0} \in \log(t^\Gamma) \cap \T_{\Delta}$. Thus, $y_{>0} \in \log(t^\Delta) \subseteq \log(\T_{\Delta}^{>0})$, as $\Delta$ is an exponential $\k$-subspace of $\Gamma$. We conclude that $y \in \log(\T_{\Delta}^{>0})$.
\end{proof}

By Lemma~\ref{logsubspaceislogsubfield}, the subfield $\T_\Delta$ of $\T$ is a logarithmic Hahn field whenever $\Delta$ is a logarithmic $\k$-subspace of $\Gamma$. If $\T$ is a transserial Hahn field, then so is $\T_\Delta$. 

\begin{lemma}\label{deltacuts2}
Let $\Delta$ be an exponential $\k$-subspace of $\Gamma$ and let $y \in \T^{>0}$ with $c(y) \not \in \Delta$. Then $\log y$ and $\log t^{c(y)}$ realize the same cut over $\T_\Delta$.
\end{lemma}
\begin{proof}
We may, of course, assume that $y \neq t^{c(y)}$. Take $u \in \T^{>0}$ with $u \asymp 1$ and $uy = t^{c(y)}$. Then $\log u + \log y = \log t^{c(y)}$. Suppose that $\log y < \log t^{c(y)}$ (the case when $\log y >\log t^{c(y)}$ is similar). Suppose towards contradiction that $\log y < a< \log t^{c(y)}$ for some $a \in \T_\Delta$. Then $0 < a- \log y<\log u \preceq 1$, so by Lemma~\ref{lem-boundedtobounded}, we may take $z \in \T^{>0}$ with $z \asymp 1$ and $\log z = a-\log y$. Then $\log(yz) = a \in \T_\Delta$, so $yz \in \T_\Delta^{>0}$ since $\Delta$ is an exponential $\k$-subspace of $\Gamma$. Since $z \asymp 1$, this gives $c(y) \in \Delta$, a contradiction. 
\end{proof}

\begin{lemma}\label{deltacuts}
Let $\Delta$ be a logarithmic $\k$-subspace of $\Gamma$ with $t^{\Delta^{>0}} \subseteq \log(t^\Delta)$ and let $y \in \T$ with $c(y) \not \in \Delta$. Then the cut that $y$ realizes over $\T_\Delta$ is completely determined by the cut that $y$ realizes over $\log(\T_\Delta^{>0})$.
\end{lemma}
\begin{proof}
We may assume that $y>0$. Let $a \in \T_\Delta$ with $a < y$. We need to find an element of $\log(\T_\Delta^{>0})$ between $a$ and $y$. If $a\leq 0$, then $a\leq \log 1<y$, so we may assume that $a>0$. Take $\delta \in \Delta$ with $a \asymp t^\delta$ and take $r \in \k^{>0}$ with $a<r t^\delta$. Since $c(y) \not \in \Delta$, we have $t^\delta \prec y$, so $rt^\delta < y$ and it remains to show that $rt^\delta \in \log(\T_\Delta^{>0})$. If $\delta \leq 0$, then $rt^\delta \preceq 1$, so $r t^\delta \in \log(\T_\Delta^{>0})$ by Lemma~\ref{lem-boundedtobounded}. If $\delta>0$, then using our assumption that $t^{\Delta^{>0}} \subseteq \log(t^\Delta)$, we have $t^\delta = \log t^\gamma$ for some $\gamma \in \Delta$. Since $\Delta$ is a $\k$-subspace of $\Gamma$, we see that $rt^\delta = \log t^{r\gamma} \in \log(t^\Delta)$. A similar argument shows that for $b \in \T$ with $b>y$, we can an element of $\log(\T_\Delta^{>0})$ between $b$ and $y$. 
\end{proof}

As the reader will recall, a family of structures $(A_\alpha)_{\alpha<\beta}$ where $\beta\leq \On$ is said to be a \textbf{continuous chain} if $A_\sigma\subseteq A_\alpha$ for all $\sigma<\alpha<\beta$ and if $A_\alpha = \bigcup_{\sigma<\alpha}A_\sigma$ for each infinite limit ordinal $\alpha<\beta$. 

\begin{lemma}\label{defofDeltaE}
If $\Delta$ is a logarithmic $\k$-subspace of $\Gamma$ then there is a smallest exponential $\k$-subspace of $\Gamma$ containing $\Delta$, which we denote by $\Delta^E$. If $\T_\Delta$ is a transserial Hahn field, then so is $\T_{\Delta^E}$.
\end{lemma}
\begin{proof}
We construct a continuous chain $(\Delta_\alpha)_{\alpha< \On}$ of logarithmic $\k$-subspaces of $\Gamma$ by setting 
\[
\Delta_0\ :=\ \Delta,\qquad \Delta_{\alpha+1}\ :=\ \big\{\gamma \in \Gamma: \log t^\gamma \in \T_{\Delta_\alpha}\big\},\qquad \Delta_\alpha\ :=\ \bigcup_{\beta<\alpha}\Delta_\beta \text{ when $\alpha>0$ is a limit ordinal.}
\]
Note that by induction on $\alpha$, $\Delta_{\alpha+1}$ is indeed a logarithmic $\k$-subspace of $\Gamma$ containing $\Delta_{\alpha}$ for each ordinal $\alpha$. Set $\Delta^E:= \bigcup_{\alpha<\On}\Delta_\alpha$. We claim that $\Delta^E$ is an exponential $\k$-subspace of $\Gamma$. To see this, let $\gamma \in \Gamma$ with $\log t^\gamma \in \T_{\Delta^E}$. Since $\supp(\log t^\gamma)$ is a set, it must be contained in some $\Delta_\alpha$, so $\log t^\gamma \in \T_{\Delta_\alpha}$ and $\gamma \in \Delta_{\alpha+1}\subseteq \Delta^E$. To see that $\Delta^E$ is minimal, note that for each $\alpha$, any exponential $\k$-subspace of $\Gamma$ containing $\Delta_\beta$ for each $\beta<\alpha$ must contain $\Delta_\alpha$. By induction, we see that any exponential $\k$-subspace of $\Gamma$ containing $\Delta_0= \Delta$ must contain $\Delta^E$.

Now suppose that $\T_\Delta$ satisfies axiom (T4). We will prove by induction on $\alpha<\On$ that $\T_{\Delta_\alpha}$ also satisfies (T4). This is clear if $\alpha$ is a limit ordinal. Suppose that $\T_{\Delta_\alpha}$ satisfies (T4) and let $\gamma_0,\gamma_1,\ldots \in \Delta_{\alpha+1}$ be as in the statement of (T4). Then $\log t^{\gamma_0} \in \T_{\Delta_\alpha}$ by definition, so $\gamma_1 \in \Delta_\alpha$. This gives $\gamma_n \in \Delta_\alpha$ for all $n>0$. Since $\T_{\Delta_\alpha}$ satisfies (T4), we have
\[
\big|\log t^{\gamma_n} - (\log t^{\gamma_n})_{>\gamma_{n+1}}\big|\ =\ t^{\gamma_{n+1}}
\]
for all $n$ greater than some $n_0$. Thus, $\T_{\Delta_{\alpha+1}}$ satisfies (T4) as well.
\end{proof}

At various points, we will need to impose additional assumptions on $\Gamma$, namely that $\log(t^\Gamma)$ is truncation closed and that $t^{\Gamma^{>0}} \subseteq \log(t^\Gamma)$. We already made use of the latter property (with $\Delta$ in place of $\Gamma$) in Lemma~\ref{deltacuts}. The following lemma shows that if $\Gamma$ enjoys these properties, then we can find a logarithmic $\k$-subspace $\Delta \subseteq \Gamma$ which enjoys the same properties while still being a set. It is worth mentioning that any \emph{exponential} $\k$-subspace $\Delta\subseteq \Gamma$ inherits these properties from $\Gamma$, but that if $\Gamma$ is a proper class, then it may not be possible to find a nontrivial exponential $\k$-subspace of $\Gamma$ whose universe is a set. This is a consequence of negative results of van der Hoeven~\cite{vdH} and Kuhlmann-Kuhlmann-Shelah~\cite{KKS} regarding exponentials on Hahn fields.

\begin{lemma}\label{settosubspace}
Let $A$ be a subset of $\Gamma$. Then $A$ is contained in a logarithmic $\k$-subspace $\Delta\subseteq \Gamma$ whose universe is a set. Moreover, if $\log(t^\Gamma)$ is truncation closed and $t^{\Gamma^{>0}} \subseteq \log(t^\Gamma)$, then $\Delta$ may be chosen so that $\log(t^\Delta)$ is truncation closed and $t^{\Delta^{>0}} \subseteq \log(t^\Delta)$.
\end{lemma}
\begin{proof}
Let $\Delta_0$ be the $\k$-subspace of $\Gamma$ generated by $A$ and for each $n$, let $\Delta_{n+1}$ be the $\k$-subspace of $\Gamma$ generated by $\Delta_n$ and $\bigcup_{\delta \in \Delta_n} \supp(\log t^\delta)$. Let $\Delta_\omega:=\bigcup_n \Delta_n$. We note that $\Delta_\omega$ is a $\k$-subspace containing $A$ whose universe is a set, and we claim that $\Delta_\omega$ is a logarithmic $\k$-subspace of $\Gamma$. Indeed, let $\delta \in \Delta_\omega$ and take $n$ with $\delta \in \Delta_n$. Then $\log t^\delta \in \T_{\Delta_{n+1}}\subseteq \T_{\Delta_\omega}$. If we only require $\Delta$ to be a logarithmic $\k$-subspace of $\Gamma$, then we may take $\Delta = \Delta_\omega$, but for the ``moreover'' part of the lemma, we have to do a bit more work.

Suppose $\log(t^\Gamma)$ is truncation closed and $t^{\Gamma^{>0}} \subseteq \log(t^\Gamma)$. Given $\Delta_\omega$ as above, we define an increasing chain $(\Delta_{\omega,n})_{n \in \N}$ of $\k$-subspaces of $\Gamma$ as follows:
\begin{enumerate}
\item Let $\Delta_{\omega,0}$ be the $\k$-subspace of $\Gamma$ generated by the set
\[
B_0\ :=\ \{\gamma \in \Gamma: \log t^\gamma\text{ is a truncation of }\log t^\delta \text{ for some }\delta \in \Delta_\omega\}.
\]
\item For each $n$, let $\Delta_{\omega,n+1}$ be the $\k$-subspace of $\Gamma$ generated by the set
\[
B_{n+1}\ :=\ \Delta_{\omega,n}\cup \{\gamma \in \Gamma: \log t^\gamma \in t^{\Delta_{\omega,n}^{>0}}\}.
\]
\end{enumerate}
Note that $\Delta_\omega$ is contained in $B_0 \subseteq \Delta_{\omega,0}$, since $\log t^\delta$ is a truncation of itself for $\delta \in \Delta_\omega$. We claim that each $\Delta_{\omega,n}$ is a logarithmic $\k$-subspace of $\Gamma$. Since $\log(t^{\Delta_{\omega,n}})$ is equal to the $\k$-subspace of $\T$ generated by $\log(t^{B_n})$, it suffices to show that $\log(t^{B_n}) \subseteq \T_{\Delta_{\omega,n}}$ for each $n$. This holds for $B_0$ since each element in $\log(t^{B_0})$ is a truncation of some element in $\log(t^{\Delta_\omega})\subseteq\T_{\Delta_\omega}$, and so $\log(t^{B_0}) \subseteq \T_{\Delta_\omega}$. Note that $t^{\Delta_{\omega,n}^{>0}} \subseteq \log(t^{B_{n+1}})$ by our assumption that $t^{\Gamma^{>0}} \subseteq \log(t^\Gamma)$. Thus, we have the equality 
\[
\log(t^{B_{n+1}}) \ =\ t^{\Delta_{\omega,n}^{>0}}\cup \log(t^{\Delta_{\omega,n}}).
\]
Assuming that $\Delta_{\omega,n}$ is a logarithmic $\k$-subspace of $\Gamma$, this gives $\log(t^{B_{n+1}}) \subseteq\T_{\Delta_{\omega,n}}$, so $\Delta_{\omega,n+1}$ is a logarithmic $\k$-subspace of $\Gamma$ as well. Next, we claim that $\log(t^{\Delta_{\omega,n}})$ is truncation closed for each $n$. First, let $\gamma,\delta \in \Delta_{\omega,0}$ be given. We will show that $(\log t^\gamma)_{>\delta} \in \log(t^{\Delta_{\omega,0}})$. Take $\gamma_1,\ldots,\gamma_m \in B_0$ and $r_1,\ldots,r_m \in \k$ with $\gamma = r_1\gamma_1+\cdots+r_m\gamma_m$, so 
\[
(\log t^\gamma)_{>\delta}\ =\ \big(r_1\log t^{\gamma_1}+\cdots+r_m \log t^{\gamma_m}\big)_{>\delta} \ =\ r_1(\log t^{\gamma_1})_{>\delta}+\cdots+r_m (\log t^{\gamma_m})_{>\delta}.
\]
Since $\log(t^{B_0})$ is truncation closed by construction, each $(\log t^{\gamma_i})_{>\delta}$ is in $\log(t^{B_0})$. Thus, $(\log t^\gamma)_{>\delta} \in \log(t^{\Delta_{\omega,0}})$, so $\log(t^{\Delta_{\omega,0}})$ is truncation closed. Now, let $n$ be given and assume that $\log(t^{\Delta_{\omega,n}})$ is truncation closed. We will show that $\log(t^{\Delta_{\omega,n+1}})$ is truncation closed. As with $\Delta_{\omega,0}$, it suffices to show that $\log(t^{B_{n+1}})$ is truncation closed, and this holds since $\log(t^{B_{n+1}}) = t^{\Delta_{\omega,n}^{>0}}\cup \log(t^{\Delta_{\omega,n}})$ is the union of truncation closed sets. Now we let $\Delta := \bigcup_n \Delta_{\omega,n}$, so $\Delta$ is a logarithmic $\k$-subspace of $\Gamma$ and $\log(t^\Delta)$ is truncation closed. To see that $t^{\Delta^{>0}} \subseteq \log(t^\Delta)$, let $\delta \in \Delta^{>0}$ and take $n$ with $\delta \in \Delta_{\omega,n}^{>0}$. Then $t^\delta \in \log(t^{\Delta_{\omega,n+1}}) \subseteq \log(t^\Delta)$. It remains to note that $\Delta$ is a set. 
\end{proof}

\subsection{$\Delta$-paths} \label{subsec-deltapaths}
We continue to assume that $\Gamma$ is an ordered $\k$-vector space. We fix a proper logarithmic $\k$-subspace $\Delta\subseteq \Gamma$. A \textbf{$\Delta$-path in $\T$} is a sequence $(y_n)_{n\in \N}$ in $\T$ such that:
\begin{enumerate}
\item each $y_n$ is a positive infinite element;
\item $c(y_n) \not\in \Delta$ for each $n$;
\item $y_{n+1} = \big|\log y_n - (\log y_n)_{\T_\Delta}\big|$ for each $n$.
\end{enumerate}
Note that if $(y_n)_{n\in \N}$ is a $\Delta$-path, then so is any of its \textbf{tails} $(y_n)_{n\geq n_0}$. The definition of a $\Delta$-path is motivated by the Main Lemma in~\cite[page 286]{R}. 

\begin{lemma}\label{independence}
Let $(y_n)_{n\in \N}$ be a $\Delta$-path in $\T$. Then $c(y_0),c(y_1),\ldots \in \Gamma$ are $\k$-linearly independent over $\Delta$.
\end{lemma}
\begin{proof}
Take $n\geq 0$, $\delta \in \Delta$, and $r_0,\ldots,r_n\in \k$ with 
\[
r_0c(y_0) + \cdots+r_nc(y_n) + \delta \ =\ 0.
\]
Then $y_0^{r_0}\cdots y_n^{r_n}t^\delta =u$ for some $u \in K^{>0}$ with $u \asymp 1$. We have
\[
r_0\log y_0 + \cdots+r_n \log y_n+ \log t^\delta \ =\ \log u.
\]
For each $m$, we have $\log y_m = (\log y_m)_{\T_\Delta}+\epsilon_m y_{m+1}$ where $\epsilon_m = \pm1$. Set
\[
a\ :=\ r_0 (\log y_0)_{\T_\Delta}+ \cdots+ r_n(\log y_n)_{\T_\Delta}+\log t^\delta\ \in\ \T_\Delta, 
\]
so $r_0\log y_0 + \cdots+r_n \log y_n+ \log t^\delta=r_0\epsilon_0y_1+\cdots + r_n \epsilon_n y_{n+1} +a$. Since $u \asymp 1$, we have $\log u \preceq 1$, so 
\[
r_0\epsilon_0y_1+\cdots + r_n \epsilon_n y_{n+1} +a \ =\ \log u\ \preceq\ 1.
\]
Since $y_m \succ \log y_m \succeq y_{m+1} \succ 1$ by Lemma~\ref{lem-posinflogs} and since $y_m \not\asymp a$ for each $m$, this is only possible if each $r_m = 0$.
\end{proof}

A $\Delta$-path $(y_n)_{n\in \N}$ is said to be \textbf{$\Delta$-atomic} if $y_n = t^{c(y_n)}$ for each $n$. Finding a $\Delta$-atomic $\Delta$-path seems difficult in general, but if $\T$ is a transserial Hahn field, then any $\Delta$-path has a $\Delta$-atomic tail. 

\begin{lemma}\label{existsatomic}
Let $(y_n)_{n\in \N}$ be a $\Delta$-path in $\T$. If $\T$ is a transserial Hahn field, then there is an $n_0 \in \N$ such that $(y_n)_{n\geq n_0}$ is $\Delta$-atomic.
\end{lemma}
\begin{proof}
Since
\[
t^{c(y_{n+1})}\ \asymp\ y_{n+1}\ =\ \big|\log y_n- (\log y_n)_{\T_\Delta}\big|,
\]
we have $c(y_{n+1}) \in \supp(\log y_n)$ for each $n$, so (T4) gives $n_0 \in \N$ such that
\[
t^{c(y_{n+1})}\ =\ \big|\log y_n- (\log y_n)_{\T_\Delta}\big| \ =\ y_{n+1}
\]
for all $n\geq n_0$. Thus, $(y_n)_{n\geq n_0}$ is $\Delta$-atomic.
\end{proof}

Given a $\Delta$-path $(y_n)_{n\in \N}$, we let $\Delta_{(y_n)}$ denote the $\k$-linear span of $\Delta\cup\big\{c(y_0),c(y_1),\ldots\big\}$ in $\Gamma$. Here is a sort of converse to Lemma~\ref{existsatomic}:

\begin{lemma}\label{lem-existsatomic2}
Let $(y_n)_{n\in \N}$ be a $\Delta$-atomic $\Delta$-path in $\T$. Then $\Delta_{(y_n)}$ is a logarithmic $\k$-subspace of $\Gamma$. Moreover, if $\T_\Delta$ is a transserial Hahn field, then so is $\T_{\Delta_{(y_n)}}$.
\end{lemma}
\begin{proof}
First, let $\gamma \in \Delta_{(y_n)}$. We need to show that $\log t^\gamma \in \T_{\Delta_{(y_n)}}$. Take $\delta \in \Delta$ and $r_0,\ldots,r_n \in \k$ with $\gamma = \delta + r_0c(y_0)+\cdots + r_n c(y_n)$. Then
\[
\log t^\gamma \ =\ \log t^\delta + r_0 \log t^{c(y_0)} + \cdots + r_n \log t^{c(y_n)}\ =\ \log t^\delta + r_0 \log y_0 + \cdots + r_n \log y_n,
\]
so it suffices to show that $\log y_n \in \T_{\Delta_{(y_n)}}$ for each $n$. We have
\[
\log y_n\ =\ (\log y_n)_{\T_\Delta}\pm y_{n+1}\ = \ (\log y_n)_{\T_\Delta}\pm t^{c(y_{n+1})} \ \in \ \T_{\Delta_{(y_n)}}
\]
for each $n$, as desired.

Now, let $\gamma_0,\gamma_1,\ldots \in \Delta_{(y_n)}$ be as in the statement of (T4), and take $\delta \in \Delta$ and $r_0,\ldots,r_k \in \k$ with $\gamma_0 = \delta+r_0c(y_0) + \cdots+r_kc(y_k)$. Then

\[
\log t^{\gamma_0} \ =\ \log t^\delta+r_0\log y_0+\cdots+r_k \log y_k\ =\ \log t^\delta+r_0\big((\log y_0)_{\T_\Delta}\pm y_1\big) + \cdots+r_k\big((\log y_k)_{\T_\Delta}\pm y_{k+1}\big).
\]
Thus, $\gamma_1$ is in the support of $\log t^\delta$ or some $(\log y_m)_{\T_\Delta}$ or some $y_{m+1}$, where $m \leq k$. In the first two cases, we would have $\gamma_n \in \Delta$ for all $n>0$, and we could conclude that
\[
\big|\log t^{\gamma_n} - (\log t^{\gamma_n})_{>\gamma_{n+1}}\big|\ =\ t^{\gamma_{n+1}}
\]
for all $n$ greater than some $n_0$, since $\T_\Delta$ satisfies (T4). Thus, we may assume that $\gamma_1 \in \supp y_{m+1}$, so $t^{\gamma_1} = y_{m+1}$, since $(y_n)_{n\in \N}$ is $\Delta$-atomic. Thus, $\gamma_2$ is in the support of 
\[
\log y_{m+1} \ =\ (\log y_{m+1})_{\T_\Delta}\pm y_{m+2}. 
\]
Again, if $\gamma_2$ is in the support of $(\log y_{m+1})_{\T_\Delta}$, then we are done, so we may assume that $t^{\gamma_2} = y_{m+2}$. Continuing in this way, we see that either $\gamma_n \in \Delta$ eventually (in which case we are done), or $t^{\gamma_n} = y_{m+n}$ for all $n>0$. In this second case, we note that since $\log y_{m+n}=(\log y_{m+n})_{\T_\Delta}\pm y_{m+n+1} =(\log y_{m+n})_{\T_\Delta}\pm t^{\gamma_{n+1}}$, we have $(\log y_{m+n})_{>\gamma_{n+1}} = (\log y_{m+n})_{\T_\Delta}$. Thus
\[
\big|\log t^{\gamma_n} - (\log t^{\gamma_n})_{>\gamma_{n+1}}\big|\ =\ \big|\log y_{m+n} - (\log y_{m+n})_{>\gamma_{n+1}}\big|\ =\ y_{m+n+1} \ =\ t^{\gamma_{n+1}}
\]
for all $n>0$. We conclude that $\T_{\Delta_{(y_n)}}$ satisfies (T4).
\end{proof}

With some additional assumptions on $\Delta$, we can find $\Delta$-paths easily:

\begin{lemma}\label{expsubspacedcut}
Suppose that $\Delta$ is an exponential $\k$-subspace of $\Gamma$ and that $\log(t^\Gamma)$ is truncation closed. Let $y \in \T$ be positive and infinite with $c(y)\not\in \Delta$. Define the sequence $(y_n)_{n\in \N}$ by
\[
y_0\ :=\ y,\qquad y_{n+1}\ :=\ \big|\log y_n - (\log y_n)_{\T_\Delta}\big|.
\]
Then $(y_n)_{n\in \N}$ is well-defined (that is, each $y_n$ is nonzero) and $(y_n)_{n\in \N}$ is a $\Delta$-path.
\end{lemma}
\begin{proof}
Let $n \geq 0$ be given and suppose that $y_n$ is a positive infinite element and that $c(y_n) \not\in \Delta$. We need to show that $y_{n+1}$ is positive and infinite and that $c(y_{n+1}) \not\in \Delta$. First, we claim that $\log y_n\not\in \T_\Delta$. Since $\Delta$ is an exponential $\k$-subspace of $\Gamma$, if $\log y_n$ were in $\T_\Delta$, then $\log y_n$ would be in $\log(\T_\Delta^{>0})$, giving $y_n \in \T_\Delta^{>0}$, a contradiction. Since $(\log y_n)_{\T_\Delta}$ is in $\T_{\Delta}$, it follows that $y_{n+1}$ is nonzero. Since $\T_\Delta$ is a Hahn space over $\R$, Lemma~\ref{notsoclose} gives that 
\[
y_{n+1} \ =\ \big|\log y_n - (\log y_n)_{\T_\Delta}\big| \ \not\asymp\ t^\delta
\]
for any $\delta \in \Delta$, which gives $c(y_{n+1}) \not\in \Delta$. To see that $y_{n+1}$ is infinite, suppose towards contradiction that 
\[
y_{n+1} \ =\ \big|\log y_n - (\log y_n)_{\T_\Delta}\big| \ \preceq\ 1.
\]
Since $\log(t^\Gamma)$ is truncation closed, $\log(\T^{>0})$ is as well by Lemma~\ref{lem-monomtoall}, so $(\log y_n)_{\T_\Delta} \in \log(\T^{>0})$. Since $(\log y_n)_{\T_\Delta} \in \T_\Delta$ and $\Delta$ is an exponential $\k$-subspace of $\Gamma$, we have $(\log y_n)_{\T_\Delta} \in \log(\T_\Delta^{>0})$. Take $z \in \T_\Delta^{>0}$ with $(\log y_n)_{\T_\Delta} = \log z$. Then 
\[
\log y_n - \log z\ =\ \log(y_n/z) \ \preceq\ 1.
\]
Lemma~\ref{lem-boundedtobounded} gives $y_n/ z \asymp 1$, so $y_n \asymp z \in \T_\Delta$, a contradiction.
\end{proof}

\section{The surreals as a logarithmic Hahn field} \label{sec:SLHF}
Making use of Conway names, Conway~\cite[page 33; also see~\cite{EH5, EH1, AL}]{CO} showed that $\No$ has the structure of a Hahn field $ \R((\omega^\No))_{\On}$. Moreover, by item (i) of Proposition~\ref{Gonshor 1986}, $\log$ extends the natural logarithm on $\R^{>0}$. Furthermore, since $\log$ is analytic, an inductive definition of $\log(1+\epsilon) = \sum_{n=1}^\infty (-1)^{n-1}\epsilon^n/n$ for all infinitesimal $\epsilon \in \No$ can be provided in the manner discussed in~\cite{F}. Accordingly, since $\log(\omega^\gamma)$ is purely infinite by Proposition~\ref{Gonshor 1986.1}, we have the following:

\begin{proposition}\label{corsurrealsarelog}
The surreal numbers are a logarithmic Hahn field.
\end{proposition}

We will see in Proposition~\ref{surrealsaretransseries} below that the surreal numbers are even a transserial Hahn field.

\subsection{Extending logarithmic subspaces of $\No$}\label{subsec-exsub}
In this subsection, let $\k$ be an Archimedean ordered field. Note that $\log \omega^{r\delta} =r\log \omega^{\delta}$ for each $r \in \k$ and each $\delta \in \Delta$ (this is an easy consequence of Proposition~\ref{Gonshor 1986.2}).
Let $\Delta$ be an initial logarithmic $\k$-subspace of $\No$. Then $\No_\Delta = \R((\omega^\Delta))_{\On}$ is an initial logarithmic Hahn subfield of $\No$. 

\begin{lemma}\label{expincut0}
Let $z \in \omega^{\Delta^{>0}}\setminus \log(\omega^\Delta)$ and suppose that $y \in \log(\omega^\Delta)$ for all $y \in \omega^{\Delta^{>0}}$ with $y <_s z$. Let $\gamma\in \No$ with $\omega^\gamma = \exp z$. Then $\gamma$ is the simplest element realizing a cut over $\Delta$ and $\Delta+\k \gamma$ is an initial logarithmic $\k$-subspace of $\No$.
\end{lemma}
\begin{proof}
By~\cite[Theorem 3.8]{BM}, we have
\[
\omega^\gamma \ =\ \exp z\ =\ \{z^k,\, \exp z_L\, |\, \exp z_R\}
\]
where $k$ ranges over the positive integers and $z_L$, $z_R$ range over the predecessors of $z$ in $\omega^{\Delta^{>0}}$ with $z_L<z<z_R$. By assumption, $\exp z_L$ and $\exp z_R$ are in $\omega^\Delta$ for all such $z_L$ and $z_R$. Of course, $z^k\in \omega^\Delta$, so 
\[
\omega^\gamma \ =\ \{0,\, \omega^{\gamma_L}\, |\, \omega^{\gamma_R}\},
\]
where $\gamma_L$ and $\gamma_R$ range over elements of $\Delta$ with $\gamma_L<\gamma<\gamma_R$. Since the map $x \mapsto \omega^x:\No\to \No$ preserves simplicity by~\cite[Theorem 12]{EH5}, it follows that $\gamma$ is the simplest element realizing a cut over $\Delta$. Thus, $\Delta+\k \gamma$ is initial by Lemma~\ref{newbasiselem}. To see that $\Delta+\k\gamma$ is a logarithmic $\k$-subspace of $\No$, let $\delta \in \Delta$ and let $r \in \k$. Since $\Delta$ is a logarithmic $\k$-subspace of $\No$ and $\log \omega^\gamma = z\in \No_\Delta$, we see that $\log \omega^{\delta+r\gamma} = \log \omega^\delta + r\log \omega^\gamma \in \No_\Delta$. 
\end{proof}

\begin{lemma}\label{expincut}
Suppose that $\omega^{\Delta^{>0}} \subseteq \log(\omega^\Delta)$. Let $z \in \No_\Delta$ be purely infinite with $z \not\in \log(\omega^\Delta)$ and suppose that $y \in\log(\omega^\Delta)$ for every proper truncation $y$ of $z$. Let $\gamma\in \No$ with $\omega^\gamma = \exp z$. Then $\gamma$ is the simplest element realizing a cut over $\Delta$ and $\Delta+\k \gamma$ is an initial logarithmic $\k$-subspace of $\No$.
\end{lemma}
\begin{proof}
By~\cite[Theorem 3.8]{BM}, we have
\[
\omega^\gamma \ =\ \exp z\ =\ \big\{0,\, \exp (z_{>\delta}+q_L\omega^\delta)\, \big|\, \exp(z_{>\delta}+q_{ R}\omega^\delta)\big\},
\]
where $\delta$ ranges over $\supp(z)$ and where $q_L$ and $q_R$ range over rational numbers with $q_L\omega^\delta< z-z_{>\delta}<q_R\omega^\delta$. Let $\delta \in \supp(z)$, so $z_{>\delta} \in \log(\omega^\Delta)$ by assumption. Since $z$ is purely infinite, $\delta$ is positive, so $\omega^\delta \in \log(\omega^\Delta)$ by assumption. As $\log(\omega^\Delta)$ is a $\k$-subspace of $\No$, this gives $z_{>\delta}+q\omega^\delta \in\log(\omega^\Delta)$ for each $q \in \Q$. As in the previous lemma, we have
\[
\omega^\gamma \ =\ \{0,\, \omega^{\gamma_L}\, |\, \omega^{\gamma_R}\},
\]
where $\gamma_L$ and $\gamma_R$ range over elements of $\Delta$ with $\gamma_L<\gamma<\gamma_R$. Again, this shows that $\gamma$ is the simplest element realizing a cut over $\Delta$, so $\Delta+\k \gamma$ is initial by Lemma~\ref{newbasiselem}. As in the previous lemma, $\Delta+\k\gamma$ is a logarithmic $\k$-subspace of $\No$ since $\log \omega^\gamma = z \in \No_\Delta$.
\end{proof}

\begin{corollary}\label{corEisinitial}
$\Delta^E$ is an initial exponential $\k$-subspace of $\No$.
\end{corollary}
\begin{proof}
Note that $\log(\omega^{\No})$ consists of all purely infinite members of $\No$, so $\Delta$ is itself an exponential $\k$-subspace of $\No$ if and only if each purely infinite element in $\T_\Delta$ is also in $\log(\omega^\Delta)$.
Suppose that $\Delta \neq \Delta^E$. We will find an initial logarithmic $\k$-subspace $\Gamma$ of $\No$ which is contained in $\Delta^E$ and which properly contains $\Delta$. First, if $\omega^{\Delta^{>0}}\not\subseteq \log(\omega^\Delta)$, then choose $z \in \omega^{\Delta^{>0}}\setminus \log(\omega^\Delta)$ such that $y \in \log(\omega^\Delta)$ for all $y \in \omega^{\Delta^{>0}}$ with $y <_s z$. Let $\gamma\in \No$ with $\omega^\gamma = \exp z$. Then $\gamma \in \Delta^E$, and $\Gamma:=\Delta+\k \gamma\subseteq \Delta^E$ is an initial logarithmic $\k$-subspace of $\No$ by Lemma~\ref{expincut0}. 

Now, suppose that $\omega^{\Delta^{>0}}\subseteq \log(\omega^\Delta)$ and let $z \in \No_\Delta \setminus \log(\omega^\Delta)$ be purely infinite. We may assume that $y \in\log(\omega^\Delta)$ for every proper truncation $y$ of $z$. Let $\gamma\in \No$ with $\omega^\gamma = \exp z$. Then $\gamma \in \Delta^E$, and $\Gamma:=\Delta+\k \gamma$ is an initial logarithmic $\k$-subspace of $\No$ by Lemma~\ref{expincut}.
\end{proof}

Before proceeding, we need the following corollary of Proposition~\ref{Gonshor 1986.2}.

\begin{corollary}\label{cortruncoflog}
Let $\gamma \in \No\setminus \Delta$. Then $\log \omega^{\gamma_\Delta}$ is a truncation of $\log \omega^\gamma$ belonging to $\No_\Delta$.
\end{corollary}
\begin{proof}
Let $\gamma = \sum_{\alpha<\beta} r_\alpha \omega^{s_\alpha}$ and take $\beta_0 \leq \beta$ with $\gamma_\Delta = \sum_{\alpha<\beta_0} r_\alpha \omega^{s_\alpha}$. Proposition~\ref{Gonshor 1986.2} gives
\[
\log \omega^{\gamma}\ =\ \sum_{\alpha<\beta} r_\alpha \omega^{h(s_\alpha)},\qquad \log \omega^{\gamma_\Delta}\ =\ \sum_{\alpha<\beta_0} r_\alpha \omega^{h(s_\alpha)}.
\]
Since $h$ is strictly increasing, we see that $\log \omega^{\gamma_\Delta}$ is a truncation of $\log \omega^\gamma$. To see that $\log \omega^{\gamma_\Delta}$ belongs to $\No_\Delta$, let $S$ be the value class of $\Delta$. Then $\Delta$ is a truncation closed, cross sectional $\k$-subspace of $\R((\omega^S))_{\On}$ by~\cite[Theorem 5.1]{EK}, so $\omega^{s_\alpha}$ belongs to $\Delta$ for each $\alpha<\beta_0$. Since $\Delta$ is a logarithmic $\k$-subspace of $\No$, we have $\log \omega^{\omega^{s_\alpha}}= \omega^{h(s_\alpha)} \in \No_\Delta$ for each $\alpha<\beta_0$. Thus, the sum $\sum_{\alpha<\beta_0} r_\alpha \omega^{h(s_\alpha)}$ belongs to $\No_\Delta$ as well. 
\end{proof}

\begin{lemma}\label{deltaatomicz}
Suppose that $\omega^{\Delta^{>0}} \subseteq \log(\omega^\Delta)$ and let $(z_n)_{n \in \N}$ be a $\Delta$-path in $\No$. Suppose also that:
\begin{enumerate}
\item $z_0$ is the simplest element realizing a cut over $\No_\Delta$;
\item $c(z_n)_\Delta \in \Delta$ for each $n$;
\item $\Delta_{(z_n)}$ is a Hahn space over $\k$.
\end{enumerate}
Then $(z_n)_{n\in \N}$ is $\Delta$-atomic and $\Delta_{(z_n)}$ is an initial logarithmic $\k$-subspace of $\No$.
\end{lemma}
\begin{proof}
We set $\Delta_0:= \Delta$ and for each $n$, we set 
\[
\gamma_n\ :=\ c(z_n),\qquad \Delta_{n+1}\ :=\ \Delta_n+\k \gamma_n.
\]
We will show by induction on $n$ that $z_n = \omega^{\gamma_n}$ and that $\gamma_n$ is the simplest element realizing a cut over $\Delta_n$. If we can show this, then Lemmas~\ref{newbasiselem} and~\ref{lem-existsatomic2} give us that $\Delta _{(z_n)}$ is an initial logarithmic $\k$-subspace of $\No$. 

We begin with $n = 0$. We have $z_0>0$ and $\gamma_0 = c(z_0) \not\in \Delta$ since $(z_n)_{n\in \N}$ is a $\Delta$-path. Thus, $z_0$ and $\omega^{\gamma_0}$ realize the same cut over $\No_\Delta$, so $z_0 \leq_s \omega^{\gamma_0}$ by our simplicity assumption 1 on $z_0$. Since $\omega^{\gamma_0}$ is a leader, $z_0$ is positive, and $\omega^{\gamma_0}$ and $z_0$ are Archimedean equivalent (i.e. $\omega^{\gamma_0} \asymp z_0$), this gives $z_0=\omega^{\gamma_0}$. Our simplicity assumption also gives that $z_0=\omega^{\gamma_0}$ is the simplest element realizing a cut over $\omega^\Delta$, so $\gamma_0$ is the simplest element realizing a cut over $\Delta$ by~\cite[Theorem 12]{EH5}.

Now let $n\geq 0$. We make the inductive assumption that the following hold:
\begin{enumerate}[(i)]
\item $z_m = \omega^{\gamma_m}$ for $m\leq n$;
\item $\gamma_m$ is the simplest element realizing a cut over $\Delta_m$ for $m \leq n$;
\item There are $s_0>s_1>\cdots>s_{n-1} \in \No$ with $\gamma_m = (\gamma_m)_\Delta\pm \omega^{s_m}$ and $\gamma_{m+1} = h(s_m)$ for $m< n$.
\end{enumerate}
Assumption (ii) and Lemma~\ref{newbasiselem} give us that $\Delta_{n+1}$ is initial. Assumption (iii) holds trivially for $n = 0$. 

For $m \leq n$, let $S_m \subseteq \No$ be the value class of $\Delta_m$. Since $\Delta_m$ is initial for $m \leq n$ by (ii) and Lemma~\ref{newbasiselem}, we see that each $S_m$ is initial and that $\Delta_m$ is a truncation closed, cross sectional subspace of $\R((\omega^{S_m}))_{\On}$ for $m \leq n$; see~\cite[Theorem 5.1]{EK}. Moreover, (iii) gives that $S_m = S_0\cup\{s_0,s_1,\ldots,s_{m-1}\}$ for $m \leq n$. 

Take $s \in \No$ with $\gamma_n- (\gamma_n)_\Delta\asymp \omega^s$. We will show that $s<s_{n-1}$, that $\gamma_n = (\gamma_n)_\Delta\pm \omega^s$, and that $\gamma_{n+1} = h(s)$, thereby proving (iii) with $s_n:= s$. We will use this to prove (i) and (ii). First note that $\gamma_n- (\gamma_n)_\Delta\not\asymp \delta$ for any $\delta \in \Delta$ by Lemma~\ref{notsoclose}, so $s\not\in S_0$. If $n = 0$, then the relation $s<s_{n-1}$ holds trivially. If $n>0$, then $\gamma_n=h(s_{n-1})$, so 
\[
\omega^s\ \asymp\ \gamma_n- (\gamma_n)_\Delta\ \preceq\ \gamma_n\ =\ h(s_{n-1})\ \prec\ \omega^{s_{n-1}},
\]
where the last inequality follows from the definition of $h$. Thus, $s<s_{n-1}$ as desired. Since $s\not\in S_0$ and $s<s_{n-1}<s_{n-2}<\cdots < s_0$, it follows that $s \not\in S_n$. 

To see that $\gamma_n = (\gamma_n)_\Delta\pm \omega^s$, take $r\in \R\setminus \{0\}$ and $z \prec \omega^s$ with $\gamma_n = (\gamma_n)_\Delta+ r\omega^s+z$. Let $\epsilon \in \{\pm 1\}$ be the sign of $r$. We claim that $\gamma_n$ and $(\gamma_n)_\Delta+\epsilon\omega^s$ realize the same cut over $\Delta_n$. Suppose towards contradiction that there is $\delta \in \Delta_n$ lying between $\gamma_n$ and $(\gamma_n)_\Delta+\epsilon\omega^s$. Then $\delta - (\gamma_n)_\Delta$ lies between $r\omega^s+z$ and $\epsilon \omega^s$, so $\omega^s \asymp \delta - (\gamma_n)_\Delta\in \Delta_n$. This gives $s \in S_n$, a contradiction. It follows from our claim and Lemma~\ref{lem-conwaytrunc} that $(\gamma_n)_\Delta+\epsilon\omega^s \leq_s \gamma_n$, so $(\gamma_n)_\Delta+\epsilon\omega^s = \gamma_n$ since $\gamma_n$ is the simplest element realizing a cut over $\Delta_n$ by (ii). 

Now, we show that $\gamma_{n+1} = h(s)$. First, we claim that $\log \omega^{(\gamma_n)_\Delta} = (\log \omega^{\gamma_n})_{\No_\Delta}$. Since $\log \omega^{(\gamma_n)_\Delta}\in \No_\Delta$ is a truncation of $\log \omega^{\gamma_n}$ by Corollary~\ref{cortruncoflog}, it suffices to show that $\log \omega^{\gamma_n}-\log \omega^{(\gamma_n)_\Delta}\not\asymp \omega^\delta$ for any $\delta \in \Delta$; see Lemma~\ref{notsoclose}. Since $\gamma_n = (\gamma_n)_\Delta\pm\omega^s$, we have
\[
\log \omega^{\gamma_n}-\log \omega^{(\gamma_n)_\Delta} \ =\ \log \omega^{\pm\omega^s}\ =\ \pm \omega^{h(s)}.
\]
Thus, it suffices to show that $h(s) \not\in \Delta$. If $h(s)$ were in $\Delta$, then since $h(s)>0$, we would have $\omega^{h(s)} =\log \omega^{\omega^s}\in \log(\omega^\Delta)$ by assumption, so $\omega^s \in \Delta$. This would give $s \in S_0$, since $\Delta$ is a cross sectional subspace of $\R((\omega^{S_0}))_{\On}$, a contradiction. Thus, $\log \omega^{(\gamma_n)_\Delta} = (\log \omega^{\gamma_n})_{\No_\Delta}$ as claimed. With this claim taken care of, we may apply our inductive assumption $z_n = \omega^{\gamma_n}$ to get 
\[
z_{n+1}\ =\ \big|\log z_n- (\log z_n)_{\No_\Delta}\big|\ =\ \big|\log \omega^{\gamma_n}- (\log \omega^{\gamma_n})_{\No_\Delta}\big|\ =\ \big|\log \omega^{\gamma_n}- \log \omega^{(\gamma_n)_\Delta}\big|\
=\ \omega^{h(s)}. 
\]
This establishes both that $\gamma_{n+1} = h(s)$ and that $z_{n+1} = \omega^{\gamma_{n+1}}$, taking care of (i) and (ii). It remains to show that $\gamma_{n+1} = h(s)$ is the simplest element realizing a cut over $\Delta_{n+1}$. Since $\Delta_{n+1}$ is initial, we know that its value class $S_n\cup \{s\}$ is initial as well. Therefore, all predecessors of $s$ lie in $S_n$. Since $\omega^{S_0} \subseteq \Delta$ and $\Delta$ is a logarithmic $\k$-subspace of $\No$, we have $\omega^{h(S_0)} = \log(\omega^{\omega^{S_0}}) \subseteq \omega^\Delta$, so $h(S_0) \subseteq \Delta$. Since
\[
h(S_n) \ =\ h(S_0) \cup \big\{h(s_0),\ldots,h(s_{n-1})\big\}\ =\ h(S_0) \cup \{\gamma_1,\ldots,\gamma_n\},
\]
we have $h(S_n) \subseteq \Delta_{n+1}$. Additionally, we have
\[
\omega^s\ =\ \big|\gamma_n- (\gamma_n)_\Delta\big|\ \in\ \Delta_{n+1}
\]
and so $\frac{1}{k}\omega^s \in \Delta_{n+1}$ for each $k$. Since all left and right options in the definition of $h(s)$ lie in $\Delta_{n+1}$, we see that $\gamma_{n+1} = h(s)$ is the simplest element realizing a cut over $\Delta_{n+1}$.
\end{proof}

\subsection{Surreal numbers as transseries}\label{subsec-surrealseries}
In this subsection, we use the extension lemmas established in the previous subsection with $\R$ in place of $\k$ to prove the following:

\begin{proposition}\label{surrealsaretransseries}
The surreal numbers are a transserial Hahn field.
\end{proposition}

Proposition~\ref{surrealsaretransseries} was first proven by Berarducci and Mantova, in the course of establishing a derivation on the surreal numbers~\cite[Theorem 8.10]{BM}. While their proof uses their \emph{nested truncation rank}, our proof relies instead on logarithmic and exponential $\R$-subspaces and $\Delta$-paths.

\begin{proof}[Proof of Proposition~\ref{surrealsaretransseries}]
The surreal numbers are a logarithmic Hahn field by Corollary~\ref{corsurrealsarelog}. Thus, $\No_\Delta$ is also a logarithmic Hahn field whenever $\Delta$ is a logarithmic $\R$-subspace of $\No$. Our task is to show that $\No$ satisfies axiom (T4). Let $\Delta \subseteq \No$ be an initial logarithmic $\R$-subspace and suppose that $\No_\Delta$ satisfies (T4). Such a subspace exists, for example $\Delta = \{0\}$. We will find an initial logarithmic $\R$-subspace $\Gamma\subseteq \No$ properly containing $\Delta$ such that $\No_\Gamma$ satisfies (T4).

First, if $\Delta$ is not an exponential $\R$-subspace of $\No$, then we let $\Gamma := \Delta^E$. Then $\Gamma$ is initial by Corollary~\ref{corEisinitial} and $\No_\Gamma$ satisfies (T4) by Lemma~\ref{defofDeltaE}. Now, suppose that $\Delta$ is an exponential $\R$-subspace of $\No$. Let $\gamma \in \No^{>0} \setminus \Delta^{>0}$ be the simplest element realizing a cut over $\Delta$. Then $z_0:= \omega^\gamma$ is the simplest element realizing a cut over $\No_\Delta$. For each $n$, let 
\[
z_{n+1}\ :=\ \big|\log z_n - (\log z_n)_{\No_\Delta}\big|.
\]
Then $(z_n)_{n\in \N}$ is a $\Delta$-path in $\No$ by Lemma~\ref{expsubspacedcut}. To see that $c(z_n)_\Delta \in \Delta$ for each $n$, let $n$ be given and note that $\log \omega^{c(z_n)_\Delta } \in \No_\Delta$ by Corollary~\ref{cortruncoflog}. Since $\Delta$ is an exponential $\R$-subspace of $\No$, this gives $\log \omega^{c(z_n)_\Delta } \in \log(\omega^\Delta)$, so $c(z_n)_\Delta \in \Delta$. Thus, we may apply Lemma~\ref{deltaatomicz} to see that $(z_n)_{n\in \N}$ is $\Delta$-atomic and that $\Gamma:=\Delta_{(z_n)}$ is an initial logarithmic $\R$-subspace of $\No$. Lemma~\ref{lem-existsatomic2} tells us that $\No_\Gamma$ satisfies (T4).
\end{proof}

\section{Transserial embeddings}\label{sec:IT}
Let $\T=\R((t^\Gamma))_{\On}$ be a logarithmic Hahn field and let $\imath:\Gamma\to \No$ be an ordered group embedding. Then $\imath$ induces an ordered field embedding $\tilde{\imath}:\T \to\No$ given by
\[
\tilde{\imath}\Big(\sum_{\alpha<\beta}r_\alpha t^{\gamma_\alpha} \Big)\ =\ \sum_{\alpha<\beta}r_\alpha \omega^{\imath(\gamma_\alpha)}.
\]
The image of the map $\tilde{\imath}$ is $\No_{\imath(\Gamma)}$. If $\imath$ is an initial embedding (that is, an embedding with initial image), then so is $\tilde{\imath}$ by Proposition~\ref{Ehrlich initial 2}.

We say that $\imath$ is a \textbf{$\log$-embedding} if 
\[
\tilde{\imath}(\log t^\gamma)\ =\ \log\omega^{\imath(\gamma)}
\]
for each $\gamma \in \Gamma$. If $\imath:\Gamma\to \No$ is a $\log$-embedding, then $\tilde{\imath}(\log x)= \log \tilde{\imath}(x)$ for all $x \in \T^{>0}$. Indeed, let $x\in \T^{>0}$ be given and take $\gamma \in \Gamma$, $r \in \R^{>0}$, and $\epsilon \prec 1$ with $x = rt^\gamma(1+\epsilon)$. We have
\begin{align*}
\tilde{\imath}(\log x) \ &= \ \tilde{\imath}(\log t^\gamma) +\tilde{\imath}(\ln r)+ \tilde{\imath}\Big(\sum_{n=1}^{\infty } (-1)^{n-1} \epsilon^n/n\Big)\\
&= \ \log \omega^{\imath(\gamma)} + \ln r + \sum_{n=1}^{\infty } (-1)^{n-1}\tilde{\imath}(\epsilon)^n/n\ =\ \log \tilde{\imath}(x),\end{align*}
where the second equality uses that $\tilde{\imath}$ is a $\log$-embedding and that $\tilde{\imath}$ is $\R$-linear and respects infinite sums. Finally, we define a \textbf{transserial embedding} to be any embedding $\T\to \No$ of the form $\tilde{\imath}$ for some $\log$-embedding $\imath$. By Proposition~\ref{surrealsaretransseries}, the surreal numbers are a transserial Hahn field, so we have the following:

\begin{fact}\label{embeddingtotransseries}
If $\T$ admits a transserial embedding into $\No$, then $\T$ is a transserial Hahn field.
\end{fact}

In Proposition~\ref{transseriestoembedding} below, we prove a converse to Fact~\ref{embeddingtotransseries}: any transserial Hahn field admits a transserial embedding into $\No$. Before proving Proposition~\ref{transseriestoembedding}, we provide sufficient conditions for when a transserial Hahn field admits an \emph{initial} transserial embedding into $\No$ in Theorem~\ref{thmembedhahn}. A corollary of this theorem, Corollary~\ref{corembedhahn}, is used both in the proof of Proposition~\ref{transseriestoembedding} and in our main result: Theorem~\ref{T1}. We remark on the necessity of the conditions in Theorem~\ref{thmembedhahn} at the end of this section.

\subsection{Initial transserial embeddings}\label{subsec-transserialembeddings}
In this subsection, we prove the following:
 
\begin{theorem}\label{thmembedhahn}
Let $\k$ be an Archimedean ordered field, let $\T = \R((t^\Gamma))_{\On}$ be a transserial Hahn field, and suppose that $\Gamma$ is a Hahn space over $\k$, that $\log(t^\Gamma)$ is truncation closed, and that $t^{\Gamma^{>0}} \subseteq \log(t^\Gamma)$. Then $\T$ admits an initial transserial embedding into $\No$.
\end{theorem}

For the remainder of this subsection, we fix a transserial Hahn field $\T = \R((t^\Gamma))_{\On}$ and an Archimedean ordered field $\k$. We assume that $\Gamma$ is a Hahn space over $\k$, that $\log(t^\Gamma)$ is truncation closed, and that $t^{\Gamma^{>0}} \subseteq \log(t^\Gamma)$. The assumption that $\T$ satisfies (T4) will not be used until the proof of Proposition~\ref{propembedhahn}.

\begin{proposition}\label{extendtoE}
Let $\Delta$ be a logarithmic $\k$-subspace of $\Gamma$ and let $\imath:\Delta\to \No$ be an initial $\log$-embedding. Then $\imath$ extends uniquely to an initial $\log$-embedding $\jmath:\Delta^E \to \No$.
\end{proposition}
\begin{proof}
We first show that any such embedding is unique. Let $\jmath_1,\jmath_2:\Delta^E \to \No$ be any two $\log$-embeddings extending $\imath$ and let $(\Delta_\alpha)_{\alpha<\On}$ be the continuous chain of logarithmic $\k$-subspaces of $\Gamma$ in the proof of Lemma~\ref{defofDeltaE}. We know that $\jmath_1$ and $\jmath_2$ agree with each other (and with $\imath$) on $\Delta_0 = \Delta$. If $\jmath_1$ and $\jmath_2$ agree on $\Delta_\beta$ for all $\beta$ less than a limit ordinal $\alpha$, then they agree on $\Delta_\alpha$ as well. We will show that if they agree on $\Delta_\alpha$, then they must agree on $\Delta_{\alpha+1}$. Let $\gamma \in \Delta_{\alpha+1}$. 
Then 
\[
\log\omega^{\jmath_1(\gamma)}\ =\ \tilde{\jmath}_1(\log t^\gamma)\ =\ \tilde{\jmath}_2(\log t^\gamma)\ =\ \log\omega^{\jmath_2(\gamma)},
\]
where the middle equality follows from the fact that $\log t^\gamma \in \T_{\Delta_\alpha}$.
Injectivity of the logarithm yields $\jmath_1(\gamma)=\jmath_2(\gamma)$. Thus, $\jmath_1$ and $\jmath_2$ agree on $\Delta^E$.

We now show that such an embedding exists. Suppose that $\Delta \neq \Delta^E$, so $\log(t^\Delta) \neq \log(t^\Gamma)\cap \T_\Delta$. It is enough to show that $\imath$ can be extended to an initial $\log$-embedding $\imath^*:\Delta^*\to \No$ where $\Delta^*$ is a logarithmic $\k$-subspace of $\Delta^E$ properly containing $\Delta$. We choose an element $x \in \log(t^\Gamma)\cap \T_\Delta$ with $x \not\in \log(t^\Delta)$ as follows:
\begin{enumerate}
\item If $t^{\Delta^{>0}}\not\subseteq \log(t^\Delta)$, then we let $x$ be an element of $t^{\Delta^{>0}}\setminus\log(t^\Delta)$ which is simplest in the following sense: for each $y \in t^{\Delta^{>0}}$, if $\tilde{\imath}(y) <_s \tilde{\imath}(x)$, then $y \in \log(t^\Delta)$.
\item If $t^{\Delta^{>0}}\subseteq \log(t^\Delta)$, then we let $x$ be an element of $\log(t^\Gamma)\cap \T_\Delta$ with $x\not\in\log(t^\Delta)$ such that every proper truncation of $x$ is in $\log(t^\Delta)$.
\end{enumerate}
Since $t^{\Delta^{>0}}\subseteq t^{\Gamma^{>0}}\subseteq \log(t^\Gamma)$ and $\log(t^\Gamma)$ is truncation closed, we can always find an element satisfying one of these conditions. Let $\gamma \in \Gamma$ with $t^\gamma = \exp x$, so $\gamma \in \Delta^E \setminus \Delta$. Let $z:= \tilde{\imath}(x)$ and let $\gamma^* \in \No$ with $\omega^{\gamma^*}= \exp z$. We claim that $\gamma^*$ realizes the same cut over $\imath(\Delta)$ that $\gamma$ realizes over $\Delta$. To see this, take $\delta \in \Delta$ and note that
\begin{align*}
\delta < \gamma\ &\Longleftrightarrow\ t^\delta<t^\gamma\ \Longleftrightarrow\ \log t^\delta < x\ \Longleftrightarrow\ \tilde{\imath}(\log t^\delta)<z\\
&\Longleftrightarrow\ \log\omega^{\imath(\delta)}<z\ \Longleftrightarrow\ \omega^{\imath(\delta)}< \omega^{\gamma^*}\ \Longleftrightarrow\ \imath(\delta)< \gamma^*.
\end{align*}
We set $\Delta^* := \Delta+\k\gamma$, and we extend $\imath$ to an ordered $\k$-vector space embedding $ \imath^*: \Delta^*\to \No$ by setting $\imath^*(\gamma):= \gamma^*$. Then $\imath^*(\Delta^*) = \imath(\Delta) + \k\gamma^*$ is an initial logarithmic $\k$-subspace of $\No$; this follows from Lemma~\ref{expincut0} if $x$ is chosen as in (1) and from Lemma~\ref{expincut} if $x$ is chosen as in (2). It remains to show that $\Delta^*$ is a logarithmic $\k$-subspace of $\Gamma$ and that $\imath^*$ is a $\log$-embedding. Consider an element $\delta + r\gamma$ where $\delta \in \Delta$ and where $r \in \k$. We have
\[
\tilde{\imath}^*(\log t^{\delta + r\gamma})\ =\ \tilde{\imath}(\log t^\delta+ rx)\ =\ \log\omega^{\imath(\delta)}+rz\ =\ \log\omega^{\imath(\delta)}+ \log\omega^{r\gamma^*}\ =\ \log\omega^{\imath^*(\delta+r\gamma)}. 
\]
This completes the proof.
\end{proof}

\begin{proposition}\label{extendtoy}
Let $\Delta$ be an exponential $\k$-subspace of $\Gamma$ whose universe is a set, let $\imath:\Delta\to \No$ be an initial $\log$-embedding, and let $(y_n)_{n\in \N}$ be a $\Delta$-atomic $\Delta$-path in $\T$. Then $\imath$ extends to an initial $\log$-embedding $\jmath:\Delta_{(y_n)}\to \No$.
\end{proposition}
\begin{proof}
For each $n$, set $d_n:= (\log y_n)_{\T_\Delta}$ and let $d_n^*:= \tilde{\imath}(d_n) \in \No_{\imath(\Delta)}$. Let $z_0\in \No$ be an element realizing the same cut over $\tilde{\imath}(\T_\Delta) = \No_{\imath(\Delta)}$ that $y_0$ realizes over $\T_\Delta$ (we can find such an element because $\Delta$ is a set). We arrange that $z_0$ is the simplest element in this cut. We define a sequence $(z_n)_{n\in \N}$ in $\No$, starting with $z_0$, by setting
\[
z_{n+1}\ :=\ |\log z_n-d_n^*|\ \text{ for all }n.
\]
We claim that $z_n$ realizes the same cut over $\tilde{\imath}(\T_\Delta)$ that $y_n$ realizes over $\T_\Delta$ for each $n\in \N$. This holds by assumption for $n = 0$, so we assume that this holds for a given $n$, and we will show that this holds for $n+1$. Since $\Delta$ is an exponential $\k$-subspace of $\Gamma$, we have $t^{\Delta^{>0}} \subseteq\log(t^\Delta)$, so by Lemma~\ref{deltacuts}, it is enough to show that $z_{n+1}$ realizes the same cut over $\log\tilde{\imath}(\T_\Delta^{>0})$ that $y_{n+1}$ realizes over $\log(\T_\Delta^{>0})$. Moreover, since $y_{n+1} = |\log y_n - d_n|$ and $z_{n+1} = |\log z_n-d_n^*|$, this reduces to showing that $\log y_n - d_n$ and $\log z_n - d_n^*$ realize the same cut over $\log(\T_\Delta^{>0})$ and $\log\tilde{\imath}(\T_\Delta^{>0})$, respectively. Since $d_n$ is a truncation of $\log y_n$ and $\log(\T^{>0})$ is truncation closed, we have $d_n \in \log(\T^{>0})$. Thus, $d_n \in \log(\T_\Delta^{>0})$, since $\Delta$ is an exponential $\k$-subspace of $\Gamma$, so for $a \in \log(\T_\Delta^{>0})$, we have
\[
\log y_n-d_n<a\ \Longleftrightarrow\ y_n<\exp(a+d_n),
\]
where $\exp(a+d_n) \in \T_\Delta^{>0}$. Since $\imath$ is assumed to be a $\log$-embedding, we have $\tilde{\imath}\big(\exp(a+d_n)\big) = \exp\tilde{\imath}(a+d_n)$. In light of our inductive assumption on $y_n$, this gives
\[
\log y_n<a+d_n\ \Longleftrightarrow \ y_n< \exp(a+d_n)\ \Longleftrightarrow \ z_n< \exp \tilde{\imath}(a+d_n)\ \Longleftrightarrow \log z_n - d_n^*<\tilde{\imath}(a),
\]
as desired. 

Now that we know that $z_n$ realizes the same cut over $\tilde{\imath}(\T_\Delta)$ that $y_n$ realizes over $\T_\Delta$ for each $n$, we may deduce that each $z_n$ is positive and infinite with $c(z_n) \not\in \imath(\Delta)$. Moreover, Lemma~\ref{samecutsametrunc} tells us that $d_n^* = (\log z_n)_{\tilde{\imath}(\T_\Delta)}$ for each $n$. Thus, $(z_n)_{n\in \N}$ is an $\imath(\Delta)$-path in $\No$. We claim that $c(z_n)_{\imath(\Delta)} \in \imath(\Delta)$ for each $n$. By Corollary~\ref{cortruncoflog}, we see that $\log \omega^{c(z_n)_{\imath(\Delta)}}$ is a truncation of $\log \omega^{c(z_n)}$ belonging to $\No_{\imath(\Delta)}$, so $\log \omega^{c(z_n)_{\imath(\Delta)}}$ is even a truncation of $(\log \omega^{c(z_n)})_{\No_{\imath(\Delta)}}$. Since $\log \omega^{c(z_n)}$ and $\log z_n$ realize the same cut over $\No_{\imath(\Delta)}$ by Lemma~\ref{deltacuts2}, this tells us that $\log \omega^{c(z_n)_{\imath(\Delta)}}$ is a truncation of $(\log z_n)_{\No_{\imath(\Delta)}} = d_n^*$. Since $d_n^* = \tilde{\imath}(d_n)$ and $d_n$ belongs to the truncation closed set $\log(t^\Delta)$, we see that $\log \omega^{c(z_n)_{\imath(\Delta)}}$ belongs to $ \tilde{\imath}\big(\log(t^\Delta)\big)= \log(\omega^{\imath(\Delta)})$. Thus, $c(z_n)_{\imath(\Delta)}$ belongs to $\imath(\Delta)$. Using Lemma~\ref{deltaatomicz}, this allows us to conclude that $(z_n)_{n\in \N}$ is $\imath(\Delta)$-atomic and that $\imath(\Delta)_{(z_n)}$ is an initial logarithmic $\k$-subspace of $\No$. 

To finish the proof, we need to show that the map $\jmath:\Delta_{(y_n)}\to \No$ which extends $\imath$ and sends $c(y_n)$ to $c(z_n)$ for each $n$ is a $\log$-embedding. Since $y_n = t^{c(y_n)}$ and $z_n = \omega^{c(z_n)}$ for each $n$, the only part which is not routine is checking that this map is order-preserving. Let $m\leq n\in \N$, let $\delta \in \Delta$, and let $r_m,\ldots,r_n\in \k$ with $r_m \neq 0$. We need to show that $r_mc(y_m) + \cdots+r_nc(y_n) + \delta> 0 \Longleftrightarrow r_mc(z_m) + \cdots+r_nc(z_n) + \imath(\delta) > 0$. We have
\[
r_mc(y_m) + \cdots+r_nc(y_n) + \delta > 0\ \Longleftrightarrow \ y_m^{r_m}\cdots y_n^{r_n}t^\delta \succ 1\ \Longleftrightarrow\ r_m\log y_m + \cdots+r_n\log y_n+ \log t^\delta> 0.
\]
For each $k$, we have $\log y_k =d_k+\epsilon_k y_{k+1}$ where $\epsilon_k = \pm1$. Set $a:= r_md_m+ \cdots+ r_nd_n+\log t^\delta\in \T_\Delta$, so 
\[
r_m\log y_m + \cdots+r_n\log y_n+ \log t^\delta>0\ \Longleftrightarrow\ r_m\epsilon_my_{m+1}+\cdots + r_n \epsilon_n y_{n+1} +a> 0.
\]
Likewise,
\[
r_mc(z_m) + \cdots+r_nc(z_n) + \imath(\delta) > 0\ \Longleftrightarrow\ r_m\epsilon_mz_{m+1}+\cdots + r_n \epsilon_n z_{n+1} +\tilde{\imath}(a)> 0.
\]
Since $y_k \succ \log y_k \succeq y_{k+1}$ and $y_k \not\asymp a$ for each $k$, there are only two possibilities. Either $y_{m+1} \prec a$, in which case
\[
r_m\epsilon_my_{m+1}+\cdots + r_n \epsilon_n y_{n+1} +a> 0\ \Longleftrightarrow \ a>0,
\]
or $y_{m+1} \succ a$, in which case
\[
r_m\epsilon_my_{m+1}+\cdots + r_n \epsilon_n y_{n+1} +a> 0\ \Longleftrightarrow \ r_m\epsilon_my_{m+1}>0 \ \Longleftrightarrow \ r_m\epsilon_m>0.
\]
The same case distinction applies for the $z_i$'s. Since $z_{m+1}$ realizes the same cut over $\tilde{\imath}(\T_\Delta)$ that $y_{m+1}$ realizes over $\T_\Delta$, we see that $y_{m+1} \prec a\Longleftrightarrow z_{m+1} \prec \tilde{\imath}(a)$, so 
\[
r_mc(y_m) + \cdots+r_nc(y_n) + \delta > 0\ \Longleftrightarrow\ a>0\ \Longleftrightarrow\ r_mc(z_m) + \cdots+r_nc(z_n) + \imath(\delta) > 0
\]
 in the first case and
\[
r_mc(y_m) + \cdots+r_nc(y_n) + \delta > 0\ \Longleftrightarrow\ r_m\epsilon_m>0\ \Longleftrightarrow\ r_mc(z_m) + \cdots+r_nc(z_n) + \imath(\delta) > 0
\]
in the second. 
\end{proof}

\begin{proposition}\label{propembedhahn}
Let $\Delta$ be a logarithmic $\k$-subspace of $\Gamma$ whose universe is a set, and let $\imath:\Delta\to \No$ be an initial $\log$-embedding. Then $\imath$ extends to an initial $\log$-embedding $\jmath:\Gamma\to \No$.
\end{proposition}
\begin{proof}
It suffices to extend $\imath$ to an initial $\log$-embedding of some logarithmic $\k$-subspace $\Delta^*$ of $\Gamma$ where $\Delta^*$ is a set which properly contains $\Delta$. We first handle the case that $\Gamma$ is itself a set. If $\Delta \neq \Delta^E$, then we use Proposition~\ref{extendtoE} to extend $\imath$ to an initial $\log$-embedding $\jmath:\Delta^E \to \No$. Note that $\Delta^E$ is a set, since it is contained in $\Gamma$. If $\Delta = \Delta^E$, then take $\gamma \in \Gamma^{>0} \setminus \Delta^{>0}$ and let
\[
y_0\ :=\ t^\gamma,\qquad y_{n+1}\ :=\ \big|\log y_n - (\log y_n)_{\T_\Delta}\big|\ \text{ for each $n$}.
\]
Then $(y_n)_{n\in \N}$ is a $\Delta$-path by Lemma~\ref{expsubspacedcut}. Using Lemma~\ref{existsatomic}, we may replace $(y_n)_{n\in \N}$ with a tail and arrange that $(y_n)_{n\in \N}$ is $\Delta$-atomic. Since $\Delta$ is a set, Proposition~\ref{extendtoy} allows us to extend $\imath$ to an initial $\log$-embedding $\jmath:\Delta_{(y_n)} \to \No$.

Now we handle the case that $\Gamma$ is a proper class. Take $y \in \Gamma \setminus \Delta$ and apply Lemma~\ref{settosubspace} with $\Delta\cup\{y\}$ in place of $A$ to get a logarithmic $\k$-subspace $\Gamma_0 \subseteq \Gamma$ properly containing $\Delta$ such that $\Gamma_0$ is a set, $\log(t^{\Gamma_0})$ is truncation closed, and $t^{\Gamma_0^{>0}} \subseteq \log(t^{\Gamma_0})$. Applying the previously handled case with $\Gamma_0$ in place of $\Gamma$, we extend $\imath$ to an initial $\log$-embedding of some logarithmic $\k$-subspace $\Delta^*\subseteq \Gamma_0$ which properly contains $\Delta$.
\end{proof}

\begin{proof}[Proof of Theorem~\ref{thmembedhahn}]
Applying Proposition~\ref{propembedhahn} with $\{0\}$ in place of $\Delta$, noting that the map $0 \mapsto 0$ is an initial $\log$-embedding, gives an initial $\log$-embedding $\imath:\Gamma\to \No$. Then $\tilde{\imath}:\T\to \No$ is an initial transserial embedding.
\end{proof}

\subsection{General transserial embeddings}\label{subsecBM}
The following is an important corollary of Theorem~\ref{thmembedhahn}:

\begin{corollary}\label{corembedhahn}
Let $\T=\R((t^\Gamma))_{\On}$ be a transserial Hahn field and suppose that $\T$ contains a truncation closed, cross sectional, exponential subfield $K$. Then $\T$ admits an initial transserial embedding into $\No$.
\end{corollary}
\begin{proof}
We verify that $\T$ satisfies the conditions of Theorem~\ref{thmembedhahn}.
Let $\R_K$ be the coefficient field of $K$, that is, $\R_K= \{r\ \in \R : rt^0 \in K \}$. Since $K$ is cross sectional, we have $t^\Gamma \subseteq K$, so $\log(t^\Gamma) \subseteq K$. Since $K$ is an exponential field, each purely infinite element $a\in K$ belongs to $\log(t^\Gamma)$ by Lemma~\ref{lem-monomialinfinite}. Since $r \log t^\gamma$ is a purely infinite element of $K$ for each $r \in \R_K$ and each $\gamma \in \Gamma$, we see that $\log(t^\Gamma)$ is an $\R_K$-subspace of $K$. Since $t^\gamma$ is a purely infinite element of $K$ whenever $\gamma \in \Gamma^{>0}$, we have $t^{\Gamma^{>0}} \subseteq \log(t^\Gamma)$. If $a$ is a truncation of some element in $\log(t^\Gamma)$, then $a \in K$, since $K$ is truncation closed, and $a$ is purely infinite, so $a\in \log(t^\Gamma)$. Thus $\log(t^\Gamma)$ is truncation closed. It remains to show that $\Gamma$ is a Hahn space over $\R_K$. Let $\gamma,\delta \in \Gamma\setminus\{0\}$ with $\gamma \asymp \delta$. Take $r \in \R$ with $\gamma -r\delta \prec \delta$. We need to show that $r \in \R_K$. By replacing $\delta$ with $-\delta$ and $r$ with $-r$ if need be, we may assume $\delta>0$. Then $p\delta<\gamma$ for all $p\in \R_K^{<r}$ and $q\delta> \gamma$ for all $q\in \R_K^{>r}$. As $\log$ is strictly increasing, we have
\[
\{\log t^{p\delta}:p\in \R_K^{<r}\}\ <\ \log t^\gamma\ <\ \{\log t^{q\delta}:q\in \R_K^{>r}\}.
\]
As $\log t^{p\delta}/\log t^{\delta} = p$ for $p \in \R$, we may divide everything above by $\log t^\delta>0$ to get
\[
\R_K^{<r}\ <\ \frac{\log t^{\gamma }}{\log t^\delta}\ <\ \R_K^{>r}.
\]
Thus, the residue of $\frac{\log t^{\gamma }}{\log t^\delta}$ is $r$, so $r \in \R_K$.
\end{proof}

Using Corollary~\ref{corembedhahn}, we can establish a converse to Fact~\ref{embeddingtotransseries}:

\begin{proposition}\label{transseriestoembedding}
Every transserial Hahn field admits a transserial embedding into $\No$. 
\end{proposition}
\begin{proof}
The proof relies on the following construction of Schmeling~\cite[Subsection 2.3.2]{SC}: given a transserial Hahn field $\T=\R((t^\Gamma))_{\On}$, Schmeling defines an ordered group $\Gamma_{\exp}$ which extends $\Gamma$ and a logarithm on $\T_{\exp} := \R((t^{\Gamma_{\exp}}))_{\On}$ which extends the logarithm on $\T$ such that $\T$ is contained in $\log(\T_{\exp}^{>0})$. The precise definitions of $\Gamma_{\exp}$ and $\T_{\exp}$ are not needed for our purpose. Though Schmeling's transserial Hahn fields are always sets, his construction works just as well for proper classes. 

Let $\T = \R((t^\Gamma))_{\On}$ be a transserial Hahn field. We define an increasing chain of ordered abelian groups $(\Gamma_n)_{n\in \N}$ and a corresponding chain of transserial Hahn fields $(\T_n)_{n\in \N}$, where $\T_n = \R((t^{\Gamma_n}))_{\On}$, as follows:
\[
\Gamma_0\ :=\ \Gamma,\qquad \T_0\ :=\ \T,\qquad \Gamma_{n+1}\ :=\ (\Gamma_n)_{\exp},\qquad \T_{n+1}\ :=\ (\T_n)_{\exp},
\]
where $(\Gamma_n)_{\exp}$ and $(\T_n)_{\exp}$ are defined \`{a} la Schmeling.
We let $\T_\omega :=\bigcup_n \T_n$, so $\T_\omega$ is an exponential field, but not necessarily a transserial Hahn field (since an increasing union of Hahn fields is not a Hahn field in general). We also let $\Gamma_\omega:= \bigcup_n \Gamma_n$, so $\R((t^{\Gamma_\omega}))_{\On}$ is a transserial Hahn field, with the unique logarithm which extends the logarithm on each subfield $\T_n$. Note that $\T_\omega$ is a truncation closed, cross sectional subfield of $\R((t^{\Gamma_\omega}))_{\On}$, so by Corollary~\ref{corembedhahn}, $\R((t^{\Gamma_\omega}))_{\On}$ admits an initial transserial embedding into $\No$. The restriction of this embedding to $\T$ is itself a transserial embedding, as is easily verified.
\end{proof}

\begin{remark}
Let $K$ be a truncation closed, cross sectional logarithmic subfield of a transserial Hahn field $\T$ and let $\imath:\T\to\No$ be a transserial embedding into $\No$. Then $\imath(K)$ is a truncation closed logarithmic subfield of $\No$, so $K$ is a \emph{field of transseries} in the sense of Berarducci and Mantova~\cite[p. 3561]{BM2}. Schmeling refers to any logarithmic subfield of a transserial Hahn field as a \emph{field of transseries}, though most natural examples (increasing unions of transserial Hahn fields, grid-based transseries, etc.) are truncation closed, cross sectional logarithmic subfields of some appropriately chosen transserial Hahn field. Thus, Berarducci and Mantova's definition of a field of transseries essentially corresponds to Schmeling's (as Berarducci and Mantova conjecture in~\cite{BM2}).
\end{remark}

The proof of Proposition~\ref{transseriestoembedding} shows that every transserial Hahn field $\T$ is contained in a larger transserial Hahn field which admits an initial transserial embedding into $\No$. Though the restriction of this embedding to $\T$ is still transserial, it may not be initial. One may ask whether $\T$ itself admits an \emph{initial} transserial embedding into $\No$. The next example shows that this is not the case in general. 

\begin{example}\label{ex-notinitial}
Let $\Gamma$ be the ordered group
\[
\Gamma\ :=\ \bigoplus_{n \in \N}\Z \gamma_n,
\]
where $\gamma_0 \succ \gamma_1 \succ\cdots$. We define a logarithm on $\R((t^\Gamma))_{\On}$ by first setting
\[
\log t^{k_0\gamma_0+\cdots+k_n\gamma_n}\ :=\ k_0t^{\gamma_1}+\cdots+k_n t^{\gamma_{n+1}}
\]
for each element $k_0\gamma_0+\cdots+k_n\gamma_n \in \Gamma$ and by then extending to $\R((t^\Gamma))_{\On}$ using Remark~\ref{rem-extendlog}. Note that $\log t^{\gamma_n} = t^{\gamma_{n+1}}$ and that $\log t^\gamma$ is purely infinite for each $\gamma \in \Gamma$. A straightforward computation gives that $\log t^\gamma\prec t^\gamma$ for each $\gamma \in \Gamma^{>0}$, so $\R((t^\Gamma))_{\On}$ is a logarithmic Hahn field. We claim that $\R((t^\Gamma))_{\On}$ is even a transserial Hahn field. To see this, let $(\delta_n)_{n\in \N}$ be as in the statement of (T4), so $\delta_{n+1} \in \supp \log(t^{\delta_n})$ for each $n$. We see that $\delta_1 = \gamma_{m+1}$ for some $m$. Then the only element in $\supp(\log t^{\delta_1})$ is $\gamma_{m+2}$, so $\delta_2 = \gamma_{m+2}$. More generally, $\delta_n = \gamma_{m+n}$ for each $n>1$, and $\log t^{\delta_n} = t^{\delta_{n+1}}$ for all $n>1$. In particular, (T4) holds.

Now we claim that there is no initial transserial embedding $\R((t^\Gamma))_{\On}\to \No$. It is enough to show that there is no initial ordered group embedding $\Gamma \to \No$. Suppose towards contradiction that $\imath:\Gamma\to \No$ is an initial ordered group embedding. Then $1 \in \imath(\Gamma)$, since $\Gamma$ is nontrivial and $\imath$ is initial. Since $\Gamma$ has no least positive element, we see that $1/2^n \in \imath(\Gamma)$ for each $n \in \N$. Take $\gamma \in \Gamma$ with $\imath(\gamma) = 1$. Then $\gamma/2^n \in \Gamma$ for each $n \in \N$, but there is no element of $\Gamma$ which can be divided by 2 arbitrarily many times.
\end{example}

Given this negative result, one may ask for whether the conditions in Theorem~\ref{thmembedhahn} are necessary. That is, if $\T = \R((t^\Gamma))_{\On}$ is a transserial Hahn field which admits an initial transserial embedding into $\No$, then is $\Gamma$ necessarily a Hahn space over some Archimedean ordered field $\k$? Is $\log(t^\Gamma)$ necessarily truncation closed? Does $\log(t^\Gamma)$ necessarily contain $t^{\Gamma^{>0}}$? As it turns out, only the second condition is necessary.

\begin{remark}
Let $\T = \R((t^\Gamma))_{\On}$ be a transserial Hahn field and let $\imath:\Gamma\to \No$ be an initial $\log$-embedding. Then $\log(t^\Gamma)$ is truncation closed. To see this, let $\gamma \in \Gamma$ and write $\imath(\gamma) = \sum_{\alpha<\beta} r_\alpha \omega^{s_\alpha}\in \No$. Then 
\[
\log \omega^{\imath(\gamma)}\ =\ \sum_{\alpha<\beta} r_\alpha \omega^{h(s_\alpha)},
\]
by Proposition~\ref{Gonshor 1986.2}. If $a \in \T$ is a truncation of $\log t^\gamma$, then $\tilde{\imath}(a) = \sum_{\alpha<\beta_0} r_\alpha \omega^{h(s_\alpha)}$ for some $\beta_0\leq \beta$. Since $\imath(\Gamma)$ is truncation closed, there is a $\delta \in \Gamma$ with $\imath(\delta) = \sum_{\alpha<\beta_0} r_\alpha \omega^{s_\alpha}$. Then 
\[
\log \omega^{\imath(\delta)}\ =\ \sum_{\alpha<\beta_0} r_\alpha \omega^{h(s_\alpha)}\ =\ \tilde{\imath}(a),
\]
again by Proposition~\ref{Gonshor 1986.2}. Since $\imath$ is a $\log$-embedding, we have $\log t^\delta = a$.
\end{remark}

\begin{example}
Let $\Gamma$ be the ordered group
\[
\Gamma\ :=\ \R \gamma_0 \oplus \bigoplus_{n \in \N^{>0}}\Q \gamma_n,
\]
where $\gamma_0 \succ \gamma_1 \succ\cdots$. As in Example~\ref{ex-notinitial}, we may define a logarithm on $\R((t^\Gamma))$ which makes it into a transserial Hahn field by setting 
\[
\log t^{r\gamma_0+q_1\gamma_1 \cdots+q_n\gamma_n}\ :=\ rt^{\gamma_1}+q_1t^{\gamma_2}+\cdots+q_n t^{\gamma_{n+1}}
\]
for $r \in \R$ and $q_1,\ldots,q_n \in \Q$. Note that $\Gamma$ is not a Hahn space over $\Q$, since $\sqrt{2} \gamma_{0} \asymp \gamma_{0}$, but $\sqrt{2} \gamma_{0} -q \gamma_{0}\not\prec \gamma_{0}$ for any $q \in \Q$. Note also that $t^{\gamma_0} \not\in \log(t^\Gamma)$. Nevertheless, there is an initial $\log$-embedding $\imath:\Gamma\to \No$, given by 
\[
\imath(r\gamma_0 + q_1\gamma_1+ \cdots+q_n \gamma_n) \ :=\ r+q_1\omega^{-1}+ \cdots+q_n \omega^{-n}
\]
for $r \in \R$ and $q_1,\ldots,q_n \in \Q$. One may verify that $\imath(\Gamma)$ is initial using the characterization of initial subgroups of $\No$ in~\cite[Theorem 5.1]{EK}. To see that $\imath$ is a $\log$-embedding, one needs to use the identity
\[
\log \omega^{\omega^{-n}}\ =\ \omega^{\omega^{-n-1}}.
\]
This identity is well-known, and it can be proven by induction on $n$ using Gonshor's map $h$ and Proposition~\ref{Gonshor 1986.2}. We leave the full details to the reader.
\end{example}

\begin{open question*}
What is a set of conditions that are individually necessary and collectively sufficient for a transserial Hahn field to admit an initial transserial embedding into $\No$?
\end{open question*}

\section{Initial embeddings of ordered exponential fields}\label{sec:IE}
We now turn to our promised characterization of which ordered exponential fields are isomorphic to initial exponential subfields of $\No$.

\begin{theorem}\label{T1}
Let $A$ be an ordered exponential field. Then $A$ is isomorphic to an initial exponential subfield of $\No$ if and only if $A$ is isomorphic to a truncation closed, cross sectional exponential subfield $K$ of a transserial Hahn field $\T$.
\end{theorem}
\begin{proof}
Let $K$ be an initial exponential subfield of $\No$ and let $\Gamma$ be the value group of $K$. Then $K$ is a truncation closed, cross sectional subfield of the Hahn field $\R((\omega^\Gamma))_{\On}$ by~\ref{IF}. For each $\gamma \in \Gamma$, we have $\omega^\gamma \in K^{>0}$, so $\log \omega^\gamma \in K \subseteq \R((\omega^\Gamma))_{\On}$. Since $\No$ is a transserial Hahn field by Proposition~\ref{surrealsaretransseries}, we see that $\R((\omega^\Gamma))_{\On}$ is a transserial Hahn field as well.

For the converse, let $K$ be a truncation closed, cross sectional exponential subfield of a transserial Hahn field $\T$. By Corollary~\ref{corembedhahn}, $\T$ admits an initial transserial embedding into $\No$. The image of $K$ under this embedding is initial by Proposition~\ref{Ehrlich initial 2}.
\end{proof}

\begin{remark}\label{rem-thanks}
An earlier version of this work did not characterize the initial exponential subfields of $\No$ using transseries. We instead introduced a condition referred to as \emph{molecularity}, which essentially required that the field $K$ in the statement of Theorem~\ref{T1} be ``built out of'' exponential subspaces and atomic elements. We thank the anonymous referee for pointing out the connection to Schmeling's transseries, which greatly simplified both the statement and the proof of Theorem~\ref{T1}.
\end{remark}

Ressayre (\cite{R}; also see~\cite{D}) showed if $A$ is a \emph{real closed exponential field} with residue class field $\R_A$ and value group $\Gamma$, then $A$ is isomorphic to a truncation closed, cross sectional subfield $K$ of a Hahn field $\R((t^\Gamma))_{\On}$, where the logarithm of $t^\gamma \in K$ is purely infinite for each $\gamma \in \Gamma$. It is an open question whether every model of the theory $T (\R,e^{x})$ of real numbers with exponentiation is isomorphic to a truncation closed, cross sectional exponential subfield of a transserial Hahn field (or even of a \emph{logarithmic} Hahn field). As such, contrary to what is stated in~\cite{EH9}, the following remains an

\begin{open question*}
Is every model of $T (\R,e^{x})$ isomorphic to an initial exponential subfield of $\No$?	
\end{open question*}

However, while this question remains open, in the following section it is shown there are distinguished classes of models of $T (\R,e^{x})$ having additional structure that are isomorphic to initial exponential subfields of $\No$. We remark that the main difficulty seems to be showing that every $A \models T (\R,e^{x})$ admits an embedding into a Hahn field $\R((t^\Gamma))_{\On}$ which sends the exponential of each infinitesimal $\epsilon \in A$ to the series $\sum_{n=0}^\infty \epsilon^n/n!$\footnote{The second author would like to thank Lothar Sebastian Krapp for helpful discussions about this issue.}. If Ressayre's embedding theorem can be amended to also satisfy this condition, then the methods in the next section can likely be used to provide a positive answer to the above question.

\section{Exponential fields which define convergent Weierstrass systems}\label{sec:WS}
Let $I = [-1,1]$. Given $n$, an open neighborhood $U \supseteq I^n$, and a real analytic function $f:U\to \R$, we define a corresponding \textbf{restricted analytic function} $\bar{f}:U \to \R$ by
\[
\bar{f}(x)\ :=\ \left\{\begin{array}{cc}f(x)&\text{if }x\in I^n\\0&\text{otherwise.}\end{array}\right.
\]
Let $\cC^\omega_r$ denote the family of all restricted analytic functions (of any arity) and let $\cF\subseteq \cC^\omega_r$. 

\begin{definition}\
\begin{enumerate}[(i)]
\item Let $\cL\uf$ be the language $(+,\cdot,-,0,1,<,\bar{f}\in \cF)$ and let $\R\uf$ be the natural expansion of $\R$ to an $\cL\uf$-structure. Let $T\uf$ be the complete $\cL\uf$-theory of $\R\uf$.
\item
Let $\cL\ufexp$ be the language $\cL\uf\cup\{\exp\}$ and let $\R\ufexp$ be the expansion of $\R\uf$ by the total exponential function. Let $T\ufexp$ be the complete $\cL\ufexp$-theory of $\R\ufexp$.
\end{enumerate}
\end{definition}

\begin{lemma}\label{lem-Hahnexpansion}
Let $\Gamma$ be a divisible ordered abelian group (whose universe is a set or a proper class). Then the Hahn field $\R((t^\Gamma))_{\On}$ admits a natural expansion to a model of $T\uf$. In this expansion, the interpretation of any restricted analytic function agrees with its Taylor series expansion.
\end{lemma}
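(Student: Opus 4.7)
The plan is to define the interpretation of each $\bar{f} \in \cF$ on the Hahn field by termwise Taylor expansion at the standard part, and then to reduce the verification that $T\uf$ holds to the classical construction of van den Dries, Macintyre, and Marker.

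First, I would fix $\bar{f} \in \cF$ of arity $n$ coming from a real analytic $f \colon U \to \R$ with $I^n \subseteq U$. Given $a = (a_1, \ldots, a_n) \in \R((t^\Gamma))_{\On}^n$, call $a$ \emph{admissible} if each $a_i$ is finite with standard part $r_i \in [-1,1]$ and, when $r_i = \pm 1$, the sign of the infinitesimal part of $a_i$ is consistent with $a_i \in [-1,1]$ in the Hahn field. For admissible $a$, write $a_i = r_i + \epsilon_i$ with $\epsilon_i$ infinitesimal, and set
\[
\bar{f}(a)\ :=\ \sum_{\alpha \in \N^n} \frac{\partial^\alpha f(r)}{\alpha!}\, \epsilon^\alpha;
\]
for non-admissible $a$, set $\bar{f}(a) := 0$.

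Second, I would verify that the displayed sum converges in $\R((t^\Gamma))_{\On}$. Since each nonzero $\epsilon_i$ has support contained in $\Gamma^{>0}$, the support of $\epsilon^\alpha$ lies in the $|\alpha|$-fold sumset of these supports, and a standard Noetherianity (well-ordered-supports) argument in Hahn fields shows that the family $\big(\tfrac{\partial^\alpha f(r)}{\alpha!}\, \epsilon^\alpha\big)_{\alpha \in \N^n}$ is summable, so $\bar{f}(a)$ is a well-defined element of $\R((t^\Gamma))_{\On}$. Boundary points $r_i = \pm 1$ cause no trouble, since $f$ extends analytically to the open neighborhood $U \supseteq I^n$, and hence the Taylor coefficients $\partial^\alpha f(r)$ are defined regardless of whether $r$ lies in the interior.

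Third, to check that the resulting expansion satisfies $T\uf$, I would invoke the construction of van den Dries, Macintyre, and Marker~\cite{DMM, DMM1}, which shows that for $\Gamma$ a set the procedure above produces a model of $T\an$ on $\R((t^\Gamma))$; taking the $\cL\uf$-reduct yields a model of $T\uf$, since $T\uf$ is the complete theory of the corresponding reduct of $\R\an$. For the proper class case, I would invoke the quantifier elimination for $T\uf$ noted in the introduction: given any formula $\varphi(x_1,\ldots,x_k)$ and any tuple $\bar a$ in $\R((t^\Gamma))_{\On}$, the parameters $\bar a$ together with any finite number of values of functions from $\cF$ computed on them all lie in a set-sized Hahn subfield $\R((t^{\Gamma'}))$, where $\Gamma'$ is the divisible subgroup of $\Gamma$ generated by the (set-sized) union of the relevant supports. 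By the set case, $\R((t^{\Gamma'}))$ is a model of $T\uf$, and quantifier elimination then transfers the truth of $\varphi(\bar a)$ to $\R((t^\Gamma))_{\On}$.

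The main technical obstacle is the summability argument and the careful handling of boundary behavior of the Taylor expansion; once the interpretations are in place, the first-order content is inherited from the van den Dries--Macintyre--Marker theorem via reducts together with quantifier elimination.
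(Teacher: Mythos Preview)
Your approach is essentially the paper's: cite van den Dries--Macintyre--Marker for the set case (you make the Taylor-series interpretation and the reduct from $T\an$ explicit, which the paper leaves implicit), and handle proper classes by exhausting $\Gamma$ with set-sized divisible subgroups. The one imprecision is your final transfer step. Saying that ``quantifier elimination then transfers the truth of $\varphi(\bar a)$ to $\R((t^\Gamma))_{\On}$'' is circular as written: QE yields $T\uf \vdash \varphi \leftrightarrow \psi$ for some quantifier-free $\psi$, but to invoke that equivalence in the class-sized structure you would already need it to be a model of $T\uf$, which is what you are trying to prove. The paper avoids this by noting that QE makes $T\uf$ model-complete and hence equivalent to an $\forall\exists$ theory, so the union of a continuous chain of set-models is again a model; equivalently, the set-sized Hahn fields $\R((t^{\Gamma'}))$ form an elementary chain by model completeness, and the elementary-chain lemma then gives that the union models $T\uf$. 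Once you phrase your last step this way, your argument is correct and matches the paper's.
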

\begin{proof}
It suffices to show this in the case that $\cF = \cC^\omega_r$; the result follows for arbitrary families $\cF$ by taking a reduct. If $\Gamma$ is a set, this is a result of van den Dries, Macintyre and Marker~\cite{DMM}. If $\Gamma$ is a proper class, then we have $\Gamma= \bigcup_{\alpha< \On}\Gamma_\alpha$ where $(\Gamma_\alpha)_{\alpha<\On}$ is an increasing family of divisible ordered abelian subgroups of $\Gamma$ whose universes are sets. Then $\R((t^\Gamma))_{\On}= \bigcup_{\alpha<\On}\R((t^{\Gamma_\alpha}))$. Moreover, since $T_{\cC^\omega_r}$ is model complete~\cite{vdD86,Ga}, the union of a continuous chain of set-models of $T_{\cC^\omega_r}$ is a model of $T_{\cC^\omega_r}$ as well~\cite[page 41, (i)]{EH2}. 
\end{proof}

In particular, $\No$ admits a natural expansion to a model of $T\uf$, where the restricted analytic functions are interpreted via Taylor series expansion. This is also, of course, a consequence of Proposition~\ref{DE1}.

Let $\cF\df\subseteq \cC^\omega_r$ be the collection of all restricted analytic functions which are $0$-definable in the structure $\R\uf$. Let $K \models T\uf$ and let $A\subseteq K$. We say that $A$ is \textbf{$\cF$-closed} if $A$ is a real closed subfield of $K$ which is closed under all functions $\bar{f} \in \cF\df$. We let the \textbf{$\cF$-closure} of $A$ be the smallest $\cF$-closed subfield of $K$ containing $A$. Given an $\cF$-closed subfield $A \subseteq K$ and an element $y \in K$, we let $A\langle y\rangle$ be the $\cF$-closure of $A\cup\{y\}$. Note that $\cF\df$ is closed under taking partial derivatives, so the following is a consequence of~\cite[Lemma 3.5]{MR} and~\cite[Lemma 3.3]{DMM1}.

\begin{lemma}\label{tclosed}
Let $\Gamma$ be a divisible ordered abelian group whose universe is a set and let $A$ be a truncation closed subset of $\R((t^\Gamma))$. Then the $\cF$-closure of $A$ is truncation closed.
\end{lemma}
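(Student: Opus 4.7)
The plan is to exhibit the $\cF$-closure of $A$ as the union of a continuous increasing chain $(A_\alpha)_{\alpha\leq\kappa}$ of truncation closed subsets of $\R((t^\Gamma))$, obtained by transfinite iteration that alternates taking real closure with adjoining values of functions in $\cF\df$ at bounded arguments. Specifically, I would let $A_0$ be the real closure of the subfield of $\R((t^\Gamma))$ generated by $A$; at a successor stage, I would let $A_{\alpha+1}$ be the real closure of the subfield generated by $A_\alpha$ together with all values $\bar{f}(x_1,\ldots,x_n)$ for $\bar{f}\in\cF\df$ and $x_1,\ldots,x_n\in A_\alpha$ with $|x_i|\leq 1$; at a limit $\lambda$, I would set $A_\lambda = \bigcup_{\alpha<\lambda}A_\alpha$. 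For cardinality reasons the chain must stabilize, and once it does, the stable value is precisely the $\cF$-closure of $A$.

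The proof then reduces to showing that truncation closure is preserved at each stage. Limit stages are immediate, since an increasing union of truncation closed sets is obviously truncation closed. A successor stage factors into two sub-steps: first adjoining the values $\bar{f}(x_1,\ldots,x_n)$, then taking the real closure. The real-closure sub-step is the Mourgues--Ressayre result~\cite[Lemma 3.5]{MR} that the real closure of a truncation closed subfield of a Hahn field is truncation closed. The analytic-function sub-step is~\cite[Lemma 3.3]{DMM1}, whose key ingredient is the fact (noted before the statement of the lemma) that $\cF\df$ is closed under partial derivatives. The idea is to write each $x_i = b_i + \epsilon_i$, where $b_i$ is the truncation of $x_i$ containing only terms $r_\beta t^\beta$ with $\beta\geq 0$ and $\epsilon_i$ is infinitesimal, Taylor-expand $\bar{f}$ around $(b_1,\ldots,b_n)$ using the partial derivatives $\partial^\alpha \bar{f}\in\cF\df$, verify convergence of the resulting series in the Hahn-field topology, and then observe that every truncation of the sum is assembled from $A_\alpha$ together with the values of $\cF\df$-functions at bounded arguments already admitted.

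The main potential obstacle lies in this analytic-function sub-step, where one must simultaneously track convergence of the Taylor expansion in the Hahn-field topology (which relies on the fact that $\epsilon_i$ has support bounded away from nonpositive exponents, so that $\epsilon^\alpha$ has strictly increasing support in $\alpha$) and control the supports of the partial sums finely enough to certify that truncations lie in the subfield adjoined at stage $\alpha+1$. However, this is exactly what~\cite[Lemma 3.3]{DMM1} carries out in detail, so no genuinely new analytic argument is needed beyond organizing the two cited lemmas into the transfinite induction sketched above. The result is that every $A_\alpha$, hence the $\cF$-closure $A_\kappa$, is truncation closed.
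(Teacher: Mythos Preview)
Your proposal is correct and matches the paper's approach exactly: the paper's proof consists solely of the observation that $\cF\df$ is closed under partial derivatives together with citations of \cite[Lemma~3.5]{MR} and \cite[Lemma~3.3]{DMM1}, and your write-up simply makes the organizing transfinite induction explicit while invoking the same two lemmas at the key steps.
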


We say that $T\uf$ \textbf{defines a convergent Weierstrass system} if the family 
\[
\big\{f(a+x):\bar{f}\in \cF\df\text{ and } a\in I^n\big\}
\]
forms a convergent Weierstrass system, as defined in~\cite{vdD}. Note that if $\cF= \cC^\omega_r$, then $T\uf$ defines a convergent Weierstrass system. Indeed, the definitions and methods in~\cite{vdD} are based on techniques developed in~\cite{DD} in the case $\cF = \cC^\omega_r$. The notion of a general Weierstrass system was introduced in~\cite{DL}. Our main result in this section is the following:

\begin{theorem}\label{initialexponentialWeier}
Suppose that $T\uf$ defines a convergent Weierstrass system and that the restriction $\overline{\exp}$ is in $\cF\df$. Then any model $K\models T\ufexp$ admits an initial $\cL\ufexp$-elementary embedding into $\No$.
\end{theorem}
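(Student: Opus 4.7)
The strategy is to verify the hypotheses of Theorem~\ref{T1} for $K$ and then upgrade the resulting initial ordered exponential field embedding to an $\cL\ufexp$-elementary one via quantifier elimination. First, using a Ressayre-style construction adapted to convergent Weierstrass systems in the manner of van den Dries, Macintyre and Marker~\cite{DMM, DMM1} and of D'Aquino, Knight, Kuhlmann and Lange~\cite{D}, one produces a truncation closed, cross sectional embedding of $K$ into a Hahn field $\R((t^\Gamma))_{\On}$ with cross section $t^\Gamma = \exp_K(K_{\PI})$. This embedding satisfies conditions (i)--(v) of Theorem~\ref{T1}: (i) and (v) hold by construction, while (ii)--(iv) follow from $K \models T\ufexp$ together with $\overline{\exp} \in \cF\df$.

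The main technical step, and the expected principal obstacle, is verifying the molecularity condition (vi). I would build $K$ as a continuous chain $(K_\alpha)_{\alpha<\beta}$ where $K_0$ is an $\cF$-closed subfield containing $\R_K$ and each $K_{\alpha+1}$ is obtained from $K_\alpha$ by forming the $\cF$-closure after adjoining an appropriately chosen element. Lemma~\ref{tclosed} ensures that these $\cF$-closures remain truncation closed, and since $\overline{\exp} \in \cF\df$, each $\cF$-closure is also closed under the relevant restrictions of $\log$ and $\exp$. To produce the $\Delta_\alpha$-atomic elements required by Definition~\ref{molecular}, given $\Gamma_\alpha \subsetneq \Gamma$, I would take a positive infinite $z \in K$ with $v(z) \notin \Gamma_\alpha$ and analyze the $\Delta_\alpha$-path of the monomial $t^{v(z)}$, iteratively absorbing the non-monomial tails of the resulting $\log_K$-developments into the exhaustion chain. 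The delicate point is that the $\Delta$-atomic condition demands every term of the $\Delta$-path be a monomial, and showing that this stabilization can be carried out inside $K$ requires careful use of the Weierstrass closure, paralleling the treatments in Sections 3.2, 6.2 and 8.2 of the paper.

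Once (vi) is verified, Theorem~\ref{T1} together with Remark~\ref{T1remark} yields an initial ordered exponential field embedding $\Phi:K \to \No$. To promote $\Phi$ to an $\cL\ufexp$-elementary embedding, I would invoke quantifier elimination for $T\ufexp$ (which holds under the convergent Weierstrass system hypothesis, as noted in the introduction) together with Fact~\ref{Noexpansion} and Proposition~\ref{DE1}, which imply that $\No$ admits a natural expansion to a model of $T\ufexp$. Since $\Phi$ is an ordered field embedding which preserves $\exp$ by construction and preserves every restricted analytic function (as both $K$ and $\No$ compute these via Taylor series on infinitesimal inputs and agree on real values), $\Phi$ is an $\cL\ufexp$-embedding, and quantifier elimination then yields that it is elementary. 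The crux of the argument is the molecularity step, where the interplay between Hahn-field combinatorics and Weierstrass analytic closure must be delicately controlled.
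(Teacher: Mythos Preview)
Your high-level architecture matches the paper's: verify the hypotheses of Theorem~\ref{T1} (with molecularity as the crux) and then upgrade to an $\cL\ufexp$-elementary embedding via quantifier elimination (the paper's Proposition~\ref{qeexp}). Your final paragraph, invoking Remark~\ref{T1remark} and the fact that both $K$ and $\No$ interpret restricted analytic functions by Taylor series, is exactly how the paper finishes.

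There is, however, a structural gap in your treatment of molecularity. You describe a two-phase procedure: first fix a Ressayre-style truncation closed, cross sectional embedding $K\hookrightarrow\R((t^\Gamma))_{\On}$, and \emph{afterwards} verify condition~(vi) by locating $\Delta_\alpha$-atomic elements in the already-embedded $K$. The difficulty is that once the embedding is fixed, there is no reason any element of $K$ should be $\Delta_\alpha$-atomic: the $\Delta$-path of $t^{v(z)}$ may well have non-monomial terms, and your phrase ``iteratively absorbing the non-monomial tails into the exhaustion chain'' does not specify what object is being modified to make them disappear. The paper avoids this by constructing the embedding and the molecularity witnesses \emph{simultaneously}, via the machinery of development triples $(A,\Delta,\imath)$ in \S8.2. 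The key move (Proposition~\ref{molecularext}) is that, given $y\in K\setminus A$, one \emph{defines} the embedding on the $\Delta$-path $(y_n)$ by sending $y_n\mapsto t^{\log_K y_n}$; this forces $\jmath(y_n)=t^{v(\jmath(y_n))}$ for every $n$, so the image $\jmath(y)$ is $\Delta$-atomic \emph{by construction}. Molecularity of the resulting $K^*$ is then automatic (Corollary~\ref{anexpembedding}), not something to be checked after the fact. Your sketch would be repaired by making explicit that the Hahn-field embedding is not fixed in advance but is built step by step alongside the chain $(\Gamma_\alpha)$, with the freedom at each stage used precisely to manufacture the required atomic elements.
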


In preparation for the proof of Theorem~\ref{initialexponentialWeier}, we devote the next subsection to proving a simpler result:

\begin{theorem}\label{initialWeier}
Suppose that $T\uf$ defines a convergent Weierstrass system. Then any model $K\models T\uf$ admits an initial $\cL\uf$-elementary embedding into $\No$.
\end{theorem}

In~\cite{F2}, Fornasiero proves that every model of $T_{\cC^\omega_r,\exp}$ admits an initial $\cL_{\cC^\omega_r,\exp}$-elementary embedding into $\No$. Fornasiero's proof involves constructing direct embeddings into $\No$, whereas we invoke Corollary~\ref{corembedhahn} above after first finding an embedding into a suitable transserial Hahn field. Both proofs, of course, make essential use of o-minimality and quantifier elimination results. Fornasiero also proves that every model of $T_{\cC^\omega_r}$ admits an initial $\cL_{\cC^\omega_r}$-elementary embedding into $\No$, again using a different method from our own.

\subsection{Initial embeddings of models of $T\uf$}
In this subsection, we assume that $T\uf$ defines a convergent Weierstrass system, and we fix a model $K\models T\uf$ whose universe is a set or a proper class. As any convergent Weierstrass system contains all constant functions $x \mapsto r$, we may naturally identify $\R$ with a subfield of $K$. The following result,~\cite[Theorem 1.7]{vdD}, is key:

\begin{proposition}\label{closureiselem1}
$\R\uf$ admits quantifier elimination in the language $\cL_{\cF\df} \cup\{(-)\inv\}$, where $(-)\inv$ is interpreted as multiplicative inversion away from zero. 
\end{proposition}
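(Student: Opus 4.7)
The plan is to follow the classical Denef--van den Dries approach to quantifier elimination for restricted analytic functions, generalized to an arbitrary convergent Weierstrass system as axiomatized in the hypothesis. The central tool is the Weierstrass preparation/division theorem, packaged into the Weierstrass-system axioms so that the operations needed for preparation stay inside $\cF\df$.

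First I would perform the usual syntactic reduction: to establish quantifier elimination it suffices to show that for every quantifier-free $\cL_{\cF\df}\cup\{(-)\inv\}$-formula $\phi(\bar x, y)$, the formula $\exists y\,\phi(\bar x, y)$ is equivalent in $\R\uf$ to a quantifier-free formula in $\bar x$. Writing $\phi$ in disjunctive normal form, the atomic pieces are of the shape $\bar f(\bar x, y) = 0$ and $\bar f(\bar x, y) > 0$ with $\bar f \in \cF\df$ (inversion is absorbed into the atoms by the usual trick of introducing a new variable $z$ and the equation $z \cdot h(\bar x, y) = 1$). Since each $\bar f$ is zero off the box $[-1,1]^{n+1}$ and agrees with an analytic $f$ on the box, I would split the argument into the case $|y|\le 1$ (and $|x_i|\le 1$ for all $i$) and the complementary case, which is immediate.

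Inside the box, the key move is Weierstrass preparation applied to each $f$. Pick a parameter point $\bar a \in [-1,1]^n$ and $b \in [-1,1]$; after a translation, the Weierstrass preparation theorem supplied by the convergent Weierstrass system writes
\[
f(\bar a + \bar u, b + v) \; = \; U(\bar u, v) \cdot P(\bar u, v),
\]
where $U$ is an analytic unit on a neighborhood of $0$ and $P(\bar u, v) = v^d + c_{d-1}(\bar u) v^{d-1} + \cdots + c_0(\bar u)$ is a Weierstrass polynomial whose coefficients $c_i$ are in $\cF\df$ (this closure is exactly what the convergent Weierstrass system axioms buy us). The sign (and vanishing) of $f$ near $(\bar a, b)$ is governed by the sign of $U$ (which is locally constant) together with the polynomial $P$ in the single variable $v$. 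Thus the original problem has been locally reduced to a quantifier-elimination statement for real-closed fields augmented by restricted analytic coefficient functions, which one handles by standard Sturm-like reasoning on $P$, yielding quantifier-free conditions on the $c_i(\bar u)$, hence on $\bar x$.

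The main technical obstacle is to patch these local reductions together into a single quantifier-free formula on $\bar x$. This is handled by a compactness/stratification argument: the parameter space $[-1,1]^n$ is compact, so finitely many Weierstrass data suffice, and the disjunction of local cases is itself a quantifier-free $\cL_{\cF\df}\cup\{(-)\inv\}$-formula provided $\cF\df$ contains all the unit factors $U$, all the Weierstrass polynomial coefficients $c_i$, and the indicator functions of the relevant boxes (again, exactly the closure properties encoded in the convergent Weierstrass system axioms and the definability of $\overline{\exp}$-free restricted operations in $\R\uf$). Gluing across the boundary of $[-1,1]^{n+1}$, where $\bar f$ drops to zero, is straightforward because both sides of the drop are quantifier-free definable. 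Together these steps yield the desired quantifier-free equivalent, completing the proof along the lines of~\cite[Theorem 1.7]{vdD}.
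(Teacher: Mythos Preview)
The paper does not prove this proposition at all: it is stated as a citation of \cite[Theorem 1.7]{vdD} and used as a black box. So there is no ``paper's proof'' to compare against; you have attempted to sketch the argument of the cited reference, which the paper deliberately omits.

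Your outline is broadly faithful to the Denef--van den Dries strategy, but a couple of points would need tightening in a genuine write-up. First, Weierstrass preparation at $(\bar a,b)$ requires $f(\bar a,b+v)$ to be regular of some finite order in $v$; when it vanishes identically you must first perform a linear change of variables (or a blow-up substitution $x_i\mapsto x_i y$) to force regularity, and the closure of the Weierstrass system under composition is what makes this stay inside $\cF\df$. Second, your ``absorb inversion by introducing a new variable $z$'' move reintroduces an existential quantifier, so it does not by itself reduce the problem; the actual argument in \cite{vdD} handles $(-)^{-1}$ more carefully via a normalization of terms and an induction on term complexity rather than a one-shot compactness-and-patch. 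These are refinements rather than fatal gaps, and your sketch correctly identifies Weierstrass preparation plus compactness of the box as the engine; it is simply more detailed than what the paper itself provides, which is nothing beyond the citation.
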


\begin{corollary}\label{closureiselem}
Let $A$ be an $\cF$-closed subclass of $K$ and let $y \in K\setminus A$. Then $A$ is an $\cL\uf$-elementary substructure of $K$ and the complete $\cL\uf$-type of $y$ over $A$ is determined by the cut of $y$ in $A$.
\end{corollary}
\begin{proof}
By the same argument as in~\cite{DMM}, a subclass $A\subseteq K$ is an elementary substructure of $K$ if and only if $A$ is $\cF$-closed, so the $\cF$-closure of $A$ is the same as the $\cL\uf$-definable closure of $A$. Since $T\uf$ is o-minimal, the complete $\cL\uf$-type of $y$ over $A$ is determined by the cut of $y$ in $A$.
\end{proof}

The following fact is an immediate consequence of~\cite[Corollary 3.7]{DMM}:

\begin{fact}
\label{wilkieineq}
Let $A$ be an $\cF$-closed subclass of $K$, let $\mfM$ be a cross section for $A$, and let $\fm \in K$ be such that $\fm \not\asymp a$ for all $a \in A$. Then $\mfM \times \fm^\Q$ is a cross section for $A\langle\fm\rangle$. 
\end{fact}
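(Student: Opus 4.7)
The statement asserts that $\mfM \times \fm^\Q$ is a cross section of $A\langle\fm\rangle$, which amounts to two things: that $\mfM \cdot \fm^\Q$ is a genuine direct product inside $A\langle\fm\rangle^{>0}$ meeting each Archimedean class at most once, and that every Archimedean class of $A\langle\fm\rangle^\times$ is in fact met. The plan is to derive both halves from Wilkie-style polynomial boundedness (this is the content of~\cite[Corollary 3.7]{DMM}), after replacing $\fm$ by $\fm\inv$ if necessary so that $\fm$ is positive and infinite over $A$.

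First I dispose of the direct-product and uniqueness claims. If $\fm^q \in \mfM \subseteq A$ for some nonzero $q \in \Q$, then by real closedness of $A$ we would have $\fm \in A$, contradicting $\fm \not\asymp a$ for all $a \in A$; hence $\mfM \cap \fm^\Q = \{1\}$. For uniqueness of Archimedean representatives, an equivalence $\fm'_1 \fm^{q_1} \asymp \fm'_2 \fm^{q_2}$ with $\fm'_i \in \mfM$ and $q_i \in \Q$ rearranges to $\fm^{q_1-q_2}\asymp \fm'_2/\fm'_1 \in A$; the same real-closedness argument forces $q_1 = q_2$, and then $\fm'_1 \asymp \fm'_2$ inside $\mfM$ forces $\fm'_1 = \fm'_2$ because $\mfM$ was already a cross section for $A$.

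The substantive content is existence. Here I invoke Wilkie's inequality for $\cF$-closures: every $y \in A\langle\fm\rangle^\times$ satisfies $a\inv \fm^{-n}\leq |y|\leq a\fm^n$ for some $a \in A^{>0}$ and $n \in \N$. Using the real closedness of $A$ (so that $\fm^{1/k} \in A\langle\fm\rangle$ for all $k \in \N^{>0}$) and the density of $\Q$ in the corresponding group of exponents, one can refine these bounds to isolate a unique $q \in \Q$ with $|y|/\fm^q \asymp a'$ for some $a' \in A^{>0}$. The unique $\fm' \in \mfM$ with $\fm' \asymp a'$ then satisfies $|y|\asymp \fm'\fm^q$, giving the required representative.

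The main obstacle is the refinement step, i.e.\ showing that the integer exponent from Wilkie's raw bound can be sharpened to a \emph{rational} exponent for which $|y|/\fm^q$ actually falls into an Archimedean class of $A$, rather than being infinitely closer to one side of every $A$-class. This is precisely the cofinality/density argument built into~\cite[Corollary 3.7]{DMM}, exploiting o-minimality of $T\uf$ via Corollary~\ref{closureiselem} to control the cut of $|y|/\fm^q$ over $A$ as $q$ ranges through $\Q$. Once that step is in hand, existence and uniqueness combine to give the cross-section statement.
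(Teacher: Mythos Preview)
The paper gives no proof here: the statement is recorded as a Fact with the line ``an immediate consequence of~\cite[Corollary 3.7]{DMM}.'' Your proposal is a correct unpacking of that citation---the direct-product and uniqueness checks are routine and accurate, and for existence you rightly identify the refinement from the raw Wilkie bound to a rational exponent $q$ as the substantive step and defer it to the same reference. One minor quibble: that refinement is not really a cofinality/density argument as you describe it, but rather uses that the field of exponents of $T\uf$ is $\Q$, i.e.\ every $A$-definable unary function is ultimately asymptotic to $cx^q$ with $q$ rational; this is precisely what~\cite[Corollary 3.7]{DMM} supplies.
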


Let $\R((t^\Gamma))_{\On}$ be a Hahn field, viewed as a model of $T\uf$ as described in Lemma~\ref{lem-Hahnexpansion}. Let $A$ be an $\cF$-closed subclass of $K$, let $\imath:A\to \R((t^\Gamma))_{\On}$ be an $\cL\uf$-elementary embedding, and suppose that $\imath(A)$ is a truncation closed, cross sectional $\cL\uf$-substructure of $\R((t^\Gamma))_{\On}$. Note that $\R$ is contained in $A$ and that $\imath$ is $\R$-linear. Let $y \in K \setminus A$. A \textbf{partial development of $y$ over $A$} is an element $\sum_{\alpha<\beta}r_{\alpha}t^{\gamma_\alpha} \in \R((t^\Gamma))_{\On}$ such that $\sum_{\alpha\leq \sigma}r_{\alpha}t^{\gamma_\alpha} \in A$ and
\[
y -\imath\inv\Big(\sum_{\alpha\leq \sigma}r_{\alpha}t^{\gamma_\alpha}\Big)\ \prec\ \imath\inv(t^{\gamma_\sigma}).
\]
for each $\sigma < \beta$. One easily verifies that any two partial developments of $y$ over $A$ are either equal or one is a truncation of the other, and that any truncation of a partial development is itself a partial development. Thus, there is a maximum partial development of $y$ over $A$, called the \textbf{development of $y$ over $A$} and denoted $D^\imath_A(y)$. These developments behave much like the truncations considered in \S\ref{sec:P2}. Indeed, if $K$ is itself a subfield of $\R((t^\Gamma))_{\On}$ and $\imath:A\to \R((t^\Gamma))_{\On}$ is the identity map, then the development of $y$ over $A$ is easily seen to coincide with the $A$-truncation of $y$.

\begin{lemma}\label{basicdevelopments}\
\begin{enumerate}
\item If $D^\imath_A(y)$ belongs to $\imath(A)$, then 
\[
y - \imath\inv\big(D^\imath_A(y)\big) \ \not\asymp \ \imath\inv(t^\gamma)
\]
for any $\gamma \in \Gamma$.
\item If $D^\imath_A(y)$ does not belong to $\imath(A)$, then $D^\imath_A(y)$ realizes the same cut over $\imath(A)$ that $y$ realizes over $A$.
\end{enumerate}
\end{lemma}
\begin{proof}
For (1), suppose towards contradiction that $D^\imath_A(y)$ belongs to $\imath(A)$ and that we can find $\gamma \in \Gamma$ with
\[
y - \imath\inv\big(D^\imath_A(y)\big) \ \asymp \ \imath\inv(t^\gamma).
\]
Take $r \in \R\setminus \{0\}$ such that
\[
y - \imath\inv\big(D^\imath_A(y)\big) -r \imath\inv(t^\gamma)\ \prec \ \imath\inv(t^\gamma).
\]
Then $D^\imath_A(y)+r t^\gamma\in \imath(A)$ is a partial development of $y$ over $A$ which strictly extends $D^\imath_A(y)$, a contradiction.

For (2), suppose towards contradiction that there is an $a \in A$ with $a<y$ and $\imath(a) > D^\imath_A(y)$ (the same argument will work in the case that $b>y$ and $\imath(b)< D^\imath_A(y)$ for some $b \in A$). Let $z =\sum_{\alpha< \sigma}r_{\alpha}t^{\gamma_\alpha}$ be the greatest common truncation of $\imath(a)$ and $D^\imath_A(y)$. Then $z \in \imath(A)$, since $z$ is a truncation of $\imath(a)$ and $\imath(A)$ is truncation closed. Since $D^\imath_A(y) \not\in \imath(A)$, we see that $z$ is a proper truncation of $D^\imath_A(y)$, so take $r \in \R\setminus \{0\}$ and $\gamma \in \Gamma$ such that $z+rt^\gamma$ is a truncation of $D^\imath_A(y)$ properly extending $z$. Then $z+rt^\gamma$ is not a truncation of $\imath(a)$ by the maximality of $z$, so 
\[
\imath(a)- (z +rt^\gamma) \ \succeq\ t^\gamma\ \succ \ D^\imath_A(y)- (z +rt^\gamma).
\]
Since $\imath(a)> D^\imath_A(y)$, this gives $\imath(a) > z +rt^\gamma$. Since $a<y$, we have
\[
y - \imath\inv(z+rt^\gamma)\ >\ a- \imath\inv(z+rt^\gamma) \ \succeq\ \imath\inv(t^\gamma).
\]
This contradicts that $z+rt^\gamma$, being a truncation of $D^\imath_A(y)$, is a partial development of $y$. 
\end{proof}

\begin{proposition}
\label{anembedding}
$K$ admits a truncation closed, cross sectional $\cL\uf$-elementary embedding into a Hahn field $\R((t^\Gamma))_{\On}$.
\end{proposition}
\begin{proof}
Fix a cross section $\mfM\subseteq K^{>0}$, let $\Gamma$ be an isomorphic copy of the ordered group $\mfM$, written additively, and let $\eta:\Gamma\to \mfM$ be an ordered group isomorphism witnessing this. Let us assume that we have an embedding $\imath:A \to \R((t^\Gamma))_{\On}$ where
\begin{enumerate}[(i)]
\item $A$ is an $\cF$-closed subclass of $K$, $\imath(A)$ is truncation closed, and $\imath$ is $\cL\uf$-elementary;
\item $\imath(\fm) = t^{\eta(\fm)}$ for each $\fm \in \mfM \cap A$.
\end{enumerate}
Such an embedding exists: since $T\uf$ defines a convergent Weierstrass system, we have $\R\subseteq K$ and so the identification of $\R$ with $\{rt^0:r\in \R\}$ is such an embedding (where $\Gamma = \{0\}$). If $A = K$ then we are done, so we assume that $A \neq K$ and show any such embedding $\imath$ can be properly extended to an embedding $\jmath:B\to \R((t^\Gamma))_{\On}$ with the same properties. We consider two possibilities:

\textbf{Case 1:} Suppose $\mfM \cap A \neq \mfM$. Take $\fm \in \mfM \setminus A$ and let $\gamma := \eta(\fm)$, so $t^\gamma$ realizes the same cut over $\imath(\mfM\cap A) = t^{\eta(\mfM\cap A)}$ that $\fm$ realizes over $\mfM\cap A$. One easily verifies that $t^\gamma$ actually realizes the same cut over $\imath(A)$ that $\fm$ realizes over $A$, so by o-minimality, $\imath$ extends to an $\cL\uf$-elementary embedding
\[
\jmath:A\langle \fm\rangle\to \R((t^{\Gamma}))_{\On}
\]
which sends $\fm$ to $t^\gamma = t^{\eta(\fm)}$. Since the class $\imath(A)\cup \{t^\gamma\}$ is truncation closed, Lemma~\ref{tclosed} tells us that $\jmath\big(A\langle y \rangle\big) = \imath(A)\langle t^\gamma \rangle$ is truncation closed as well. Using Fact~\ref{wilkieineq}, we see that $\mfM \cap A\langle \fm\rangle= (\mfM \cap A) \times \fm^\Q$, and that $\jmath(\fn\fm^q) = t^{\eta(\fn)}t^{q\gamma}= t^{\eta(\fn\fm^q)}$ for each $\fn \in \mfM \cap A$ and each $q\in \Q$. Thus, $\jmath$ also satisfies (i) and (ii). 

\textbf{Case 2:} Suppose $\mfM \cap A = \mfM$, so $\imath(A)$ is cross sectional. Let $y \in K\setminus A$. We claim that $D^\imath_A(y) \not\in \imath(A)$. If $D^\imath_A(y)$ were in $\imath(A)$, then we could take $\fm \in \mfM$ with $y- \imath\inv\big(D^\imath_A(y)\big) \asymp \fm$. Since each $\fm \in \mfM$ is equal to $\imath\inv(t^{\eta(\fm)})$, this would contradict the first part of Lemma~\ref{basicdevelopments}. Thus, $D^\imath_A(y) \not\in \imath(A)$ as claimed, and so $D^\imath_A(y)$ realizes the same cut over $\imath(A)$ that $y$ realizes over $A$ by the second part of Lemma~\ref{basicdevelopments}. By o-minimality, $\imath$ extends to an $\cL\uf$-elementary embedding
\[
\jmath:A\langle y \rangle\to \R((t^\Gamma))_{\On}
\]
which sends $y$ to $D^\imath_A(y)$. As all truncations of $D^\imath_A(y)$ are in $\imath(A)$, the class $\imath(A)\cup \big\{D^\imath_A(y)\big\}$ is truncation closed. Thus, $\jmath\big(A\langle y \rangle\big) = \imath(A)\big\langle D^\imath_A(y)\big\rangle$ is truncation closed as well by Lemma~\ref{tclosed}, so $\jmath$ satisfies condition (i). Condition (ii) holds for $\jmath$ since it holds for $\imath$.
\end{proof}

\begin{proof}[Proof of Theorem~\ref{initialWeier}]
By Proposition~\ref{anembedding}, we may identify $K$ with a truncation closed, cross sectional $\cL\uf$-elementary substructure of a Hahn field $\R((t^\Gamma))_{\On}$. As $\Gamma$ is divisible, we may fix an initial ordered group embedding $\imath:\Gamma\to \No$ by Proposition~\ref{Prop. 2}. Then the induced ordered field embedding
\[
\sum_{\alpha<\beta} r_\alpha t^{\gamma_\alpha} \mapsto \sum_{\alpha<\beta} r_\alpha \omega^{\imath(\gamma_\alpha)}:K\to \No
\]
is initial by Proposition~\ref{Ehrlich initial 2}. Since each restricted analytic function in $\cF\df$ agrees with its Taylor series expansion in both $\R((t^\Gamma))_{\On}$ and in $\No$, the embedding above is an $\cL_{\cF\df}$-embedding. By Proposition~\ref{closureiselem1}, the image of this embedding is $\cL\uf$-elementary.
\end{proof}

\subsection{Convergent Weierstrass systems with an entire exponential function}
In this subsection, we assume that $T\uf$ defines a convergent Weierstrass system and that $\overline{\exp}$ is in $\cF\df$. We fix a model $K \models T\ufexp$ whose universe is a set or a proper class. Note that $K$ is an ordered logarithmic field, as defined earlier. We note here some basic consequences of the theory $T\ufexp$:
\begin{enumerate}
\item If $a \in K^{>0}$ and $a \asymp 1$, then $\log a \preceq 1$. To see this, take $r \in \R$ with $r>1$ and $r\inv <a<r$. Then $-\ln r<\log a<\ln r$, so $\log a \preceq \ln r \asymp 1$. 
\item If $a \in K$ and $a \preceq 1$, then $\exp a \asymp 1$. This can be shown using an argument similar to (1).
\item If $a \in K$ is positive and infinite, then $\log a$ is also positive and infinite and $\log a \prec a$. The proof of this is identical to the proof of Lemma~\ref{lem-posinflogs}, using that $K$ contains the exponential field $\R$ as an elementary substructure. 
\end{enumerate}
The method used in~\cite{DMM} gives the following:

\begin{proposition}\label{qeexp}
$\R\ufexp$ admits quantifier elimination in the language $\cL_{\cF\df,\exp}\cup \{\log\}$.
\end{proposition}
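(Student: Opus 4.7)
The plan is to imitate the proof of quantifier elimination for $\R\anexp$ given by van den Dries, Macintyre, and Marker in~\cite{DMM}. Two facts enable the DMM argument to go through essentially unchanged. First, Proposition~\ref{closureiselem1} provides quantifier elimination for the restricted fragment $T\uf$ in $\cL_{\cF\df}\cup\{(-)\inv\}$, and in particular $T\uf$ is o-minimal, so types over a substructure are determined by cuts. Second, since $\overline{\exp}\in \cF\df$, the restricted exponential is already a quantifier-free term, which is what allows the unbounded exponential to be reduced to a combination of quantifier-free operations outside the bounded regime.

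The principal step would be to establish model completeness of $T\ufexp$ in $\cL_{\cF\df,\exp}\cup\{\log\}$: every $\cL_{\cF\df,\exp}\cup\{\log\}$-substructure $A$ of a model of $T\ufexp$ is $\cL_{\cF\df,\exp}$-elementary in that model. I would proceed by showing that for any one-element extension of $A$ by some $y$, the $\cL_{\cF\df,\exp}\cup\{\log\}$-closure $A\langle y\rangle$ is determined up to isomorphism over $A$ by the cut of $y$ in $A$. The main technical device is the decomposition of an element into purely infinite, real, and infinitesimal parts with respect to the convex valuation, combined with the functional equation $\exp(a+b)=\exp(a)\exp(b)$: the infinitesimal contribution is absorbed by $\overline{\exp}\in \cF\df$, the real part is trivial, and the purely infinite part is reduced by iterated application of $\log$ until one reaches the bounded regime where $\exp$ and $\overline{\exp}$ coincide. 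Symmetric considerations handle $\log$ of positive infinite elements. Once model completeness is in hand, quantifier elimination follows by combining it with the $\cL\uf$-QE of the restricted fragment via the standard back-and-forth criterion, together with o-minimality of the underlying $T\uf$.

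The main obstacle will be to verify that the analytic-geometric arguments of~\cite{DMM}---in particular the uses of Weierstrass preparation, Weierstrass division, and the convergence estimates that control term normal forms---carry over from the specific case $\cF=\cC^\omega_r$ to a general convergent Weierstrass system. Since the axiomatization of convergent Weierstrass systems in~\cite{vdD} was devised precisely to isolate these closure and preparation properties, the adaptation is expected to be a matter of checking that each appeal to an analytic-geometric lemma in the DMM proof uses only the axioms of a convergent Weierstrass system rather than properties specific to all convergent real power series on polydiscs. Once that bookkeeping is completed, the conclusion follows as in~\cite{DMM}.
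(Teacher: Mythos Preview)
Your proposal is correct and aligns with the paper's own treatment: the paper does not give a proof but simply asserts that ``the method used in~\cite{DMM} gives'' the result, and your outline is precisely a sketch of that method together with the observation that the analytic-geometric lemmas invoked in~\cite{DMM} use only the axioms of a convergent Weierstrass system. In other words, you have filled in the details the paper leaves implicit, and there is nothing to correct.
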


\begin{definition}
A \textbf{development triple} $(A,\Delta,\imath)$ consists of an $\cF$-closed subclass $A\subseteq K$, an $\R$-vector subspace $\Delta\subseteq A$ with $\exp(\Delta) \subseteq A$, and an $\cL\uf$-elementary embedding $\imath:A\to\R((t^\Delta))_{\On}$ such that
\begin{enumerate}[(i)]
\item $\imath(A)$ is truncation closed,
\item $\imath(\exp \delta) = t^\delta$ for each $\delta \in \Delta$, and
\item $\imath(\delta)$ is purely infinite for each $\delta \in \Delta$.
\end{enumerate}
We say that a development triple $(A,\Delta,\imath)$ is an \textbf{$\exp$-development triple} if for each $a \in A$, there is a $\delta \in \Delta$ with $a-\delta \preceq 1$.
\end{definition}

The definition of a development triple is inspired by~\cite{D}. Our development triples are slightly different from the ones in~\cite{D}, as we insist that our substructures $A$ be $\cF$-closed. Let $(A,\Delta,\imath)$ be a development triple. Then condition (ii) tells us that $\{\exp \delta:\delta \in \Delta\}$ is a cross section for $A$ and that $\imath(A)$ is cross sectional. Since $\delta\prec\exp \delta$ for $\delta \in \Delta^{>0}$, it follows from (ii) that $\imath(\delta)\prec t^\delta$. Thus, by Remark~\ref{rem-extendlog}, there is a unique logarithm on $\R((t^\Delta))_{\On}$ which makes $\R((t^\Delta))_{\On}$ a logarithmic Hahn field and which satisfies $\log t^\delta = \imath(\delta)$. We call this logarithm the \textbf{induced logarithm on $\R((t^\Delta))_{\On}$}. We say that $(A,\Delta,\imath)$ is \textbf{transserial} if $\R((t^\Delta))_{\On}$, equipped with the induced logarithm, is a transserial Hahn field. 

\begin{lemma}\label{expdevelopmentclosed}
Let $(A,\Delta,\imath)$ be a development triple. Then $A$ is closed under $\log$ and $\imath:A \to \R((t^\Delta))_{\On}$ is a logarithmic field embedding with respect to the induced logarithm on $\R((t^\Delta))_{\On}$. Moreover, $(A,\Delta,\imath)$ is an $\exp$-development triple if and only if $A$ is closed under $\exp$.
\end{lemma}
\begin{proof}
First, if $a \asymp 1$, then take $r\in \R^{>0}$ and $\epsilon \prec 1$ with $a = r(1+\epsilon)$. We have $ \log(1+\epsilon) \in A$ and $\imath\big(\log(1+\epsilon)\big) = \log \imath(1+\epsilon)$, since $A$ is $\cF$-closed and $\imath$ is an $\cL\uf$-elementary embedding. Since $\ln r\in \R$ and $\imath$ is the identity on $\R$, it follows that $ \log a \in A$ and $\imath(\log a) = \log \imath(a)$.

Now let $a$ be arbitrary and take $\delta \in \Delta$ with $\imath(a) \asymp t^\delta$. Since $t^\delta = \imath(\exp \delta)$, we have $a\asymp \exp \delta$, so take $u \in A^{>0}$ with $u \asymp 1$ and $a = u\exp \delta$. Then $\log u \in A$ by the previous case, so $\log a = \log u + \delta$ is in $A$ as well. Since $\imath(a) = \imath(u)t^\delta$, we have
\[
\log \imath(a)\ =\ \log \imath(u)+ \log t^\delta \ =\ \imath(\log u) + \imath(\delta) \ =\ \imath(\log u +\delta)\ =\ \imath(\log a).
\]

Finally, suppose that $(A,\Delta,\imath)$ is an $\exp$-development triple, let $a \in A$, and take $\delta \in \Delta$, $r \in \R$, and $\epsilon \prec 1$ with $a = \delta+r+ \epsilon$. Then $\exp a = \exp(\delta)\exp(r)\exp(\epsilon)$, where $\exp\delta \in A$ by assumption, $\exp r \in \R\subseteq A$, and $\exp \epsilon \in A$ since $A$ is $\cF$-closed. Conversely, suppose that $A$ is closed under $\exp$, take $a \in A$, and let $\delta \in \Delta$ with $\imath(\exp a) \asymp t^\delta$. Then $\exp a \asymp \exp \delta$ by (ii), so take $u \in A^{>0}$ with $u \asymp 1$ and $\exp a = u\exp \delta$. Taking logarithms gives $a - \delta = \log u \preceq 1$. 
\end{proof} 

\begin{lemma}
\label{anexpembedding1}
Let $(A,\Delta,\imath)$ be a transserial development triple and let $y \in K\setminus A$ with $D^\imath_A(y) \not\in \imath(A)$. Then $(A,\Delta,\imath)$ can be extended to a transserial development triple $(B,\Delta,\jmath)$ with $y \in B$. 
\end{lemma}
\begin{proof}
Lemma~\ref{basicdevelopments} tells us that $D^\imath_A(y)$ realizes the same cut over $\imath(A)$ that $y$ realizes over $A$, so by o-minimality, we may extend $\imath$ to an $\cL\uf$-elementary embedding
\[
\jmath: A\langle y\rangle\to \R((t^\Delta))_{\On}
\]
which sends $y$ to $D^\imath_A(y)$. We set $B:= A\langle y \rangle$ and we claim that $(B,\Delta,\jmath)$ is a development triple. Since $\Delta$ remains unchanged, conditions (ii) and (iii), as well as the condition that $\R((t^\Delta))_{\On}$ is a transserial Hahn field, hold immediately. To see that $\jmath(B) = \imath(A)\big\langle D^\imath_A(y)\big\rangle$ is truncation closed, note that the class $\imath(A) \cup \big\{D^\imath_A(y)\big\}$ is truncation closed since all truncations of $D^\imath_A(y)$ are in $\imath(A)$ and use Lemma~\ref{tclosed}.
\end{proof}

\begin{lemma}
\label{anexpembedding2}
Let $(A,\Delta,\imath)$ be a transserial development triple and let $a \in A$ be such that $a- \delta \succ 1$ for all $\delta \in \Delta$. Then $(A,\Delta,\imath)$ can be extended to a transserial development triple $(B,\Gamma,\jmath)$ with $a-\gamma \preceq 1$ for some $\gamma \in \Gamma$. 
\end{lemma}
\begin{proof}
By assumption, the purely infinite part $\imath(a)_{>0}$ of $\imath(a)$ is not in $\imath(\Delta)$, so let $\gamma := \imath\inv\big(\imath(a)_{>0}\big)$. Then $a - \gamma \preceq 1$. We let $\Gamma$ be the $\R$-subspace $\Delta+ \R\gamma \subseteq A$ and we let $B:=A\langle \exp \gamma \rangle$. We claim that $\exp \gamma \not\asymp \exp \delta$ for any $\delta \in \Delta$. Suppose otherwise, and let $\delta \in \Delta$ and $u \in K$ with $u \asymp 1$ and $\exp \gamma = u\exp \delta$. Then $\gamma = \log u + \delta$, so $\gamma - \delta = \log u \preceq 1$. Since $a - \gamma \preceq 1$, it follows that $a - \delta \preceq 1$, a contradiction. Since $\{\exp \delta: \delta \in \Delta\}$ is a cross section for $A$, this claim tells us that $\exp \gamma \not \asymp y$ for any $y \in A$.

We now turn to defining an embedding $\jmath: B\to \R((t^\Gamma))_{\On}$. We claim that $t^\gamma$ realizes the same cut over $\imath(A)$ that $\exp \gamma$ realizes over $A$. To see this, let $y \in A^{>0}$ and take $\delta \in \Delta$ with $y \asymp \exp \delta$. Then $\imath(y) \asymp \imath(\exp \delta) = t^\delta$, so 
\[
y< \exp \gamma\ \Longleftrightarrow \ \exp \delta \prec \exp \gamma\ \Longleftrightarrow\ \delta < \gamma\ \Longleftrightarrow\ t^\delta \prec t^\gamma\ \Longleftrightarrow\ \imath(y) < t^\gamma.
\]
By o-minimality, $\imath$ extends to an $\cL\uf$-elementary embedding
\[
\jmath:B \to \R((t^\Gamma))_{\On}
\]
which sends $\exp \gamma$ to $t^{\gamma}$.

We need to show that $\big(B,\Gamma,\jmath\big)$ is a transserial development triple. Since $\imath(A) \cup \{t^\gamma\}$ is a truncation closed class, Lemma~\ref{tclosed} gives that $\jmath(B) = \imath(A)\langle t^\gamma \rangle$ is truncation closed. Given $\delta \in \Delta$ and $r \in \R$, our definition of $\jmath$ gives
\[
\jmath\big(\exp (\delta+r\gamma) \big) \ =\ \imath(\exp \delta)\jmath(\exp \gamma)^r\ =\ t^\delta (t^\gamma)^r\ =\ t^{\delta+r\gamma},
\]
so (ii) is satisfied. Since $\imath(\gamma)$ is purely infinite, (iii) is satisfied. Thus, $\R((t^\Gamma))_{\On}$ with the induced logarithm is a logarithmic Hahn field. Note that $\Gamma = \Delta^E$, where $\Delta^E\subseteq \Gamma$ is constructed as in Lemma~\ref{defofDeltaE} with $\R$ in place of $\k$. Since $\R((t^\Delta))_{\On}$ is a transserial Hahn field, the second part of Lemma~\ref{defofDeltaE} gives that $\R((t^\Gamma))_{\On}$ is also a transserial Hahn field.
\end{proof}

\begin{proposition}\label{molecularext}
Let $(A,\Delta,\imath)$ be a transserial $\exp$-development triple with $D^\imath_A(x) \in \imath(A)$ for all $x\in K$ and let $y \in K\setminus A$. Then $(A,\Delta,\imath)$ can be extended to a transserial development triple $(B,\Gamma,\jmath)$ with $y \in B$. 
\end{proposition}
\begin{proof}
By assumption, $D^\imath_A(y)$ is in $\imath(A)$, so it suffices to find a transserial development triple $(B,\Gamma,\imath)$ containing $\big| y- \imath\inv\big(D^\imath_A(y)\big)\big|$. Thus, by replacing $y$ with $\big| y- \imath\inv\big(D^\imath_A(y)\big)\big|$, we may assume that $y$ is positive and that $y \not\asymp a$ for any $a \in A$ (this uses Lemma~\ref{basicdevelopments}). By replacing $y$ with $y\inv$ if need be, we may also arrange that $y$ is infinite. We construct a sequence $(y_n)_{n\in \N}$ of infinite elements of $K$ by setting
\[
y_0\ :=\ y,\qquad y_{n+1}\ :=\ \big| \log(y_n) -\imath\inv\big(D^\imath_A(\log y_n)\big)\big|.
\]
Since $A$ is closed under exponentiation by Lemma~\ref{expdevelopmentclosed}, we see that each $y_n\in K \setminus A$. Lemma~\ref{basicdevelopments} tells us that each $y_n \not\asymp a$ for any $a \in A$. For each $n$, let $d_n := \imath\inv\big(D^\imath_A(\log y_n)\big)\in A$ and let $\epsilon_n = \pm 1$ be the sign of $\log y_n - d_n$, so $\log y_n = \epsilon_n y_{n+1}+ d_n$. Exponentiating gives 
\[
y_n \ =\ \exp(\epsilon_n y_{n+1}) \exp(d_n).
\]
If $y_{n+1}$ were not infinite, then we would have $\exp(\epsilon_n y_{n+1}) \asymp 1$ and $y_n \asymp \exp d_n\in A$, a contradiction, so each $y_n$ is infinite.

\textbf{Claim 1:} Let $n$ be fixed. We claim that $y_n \not\asymp a$ for any $a \in A\langle y_0,\ldots,y_{n-1}\rangle$. We may assume that this holds for $m<n$, in which case, Fact~\ref{wilkieineq} tells us that the class 
\[
\big\{\exp(\delta)y_0^{r_0}\cdots y_{n-1}^{r_{n-1}}: \delta \in \Delta\text{ and }r_0,\ldots,r_{n-1} \in \R\big\}
\] 
is a cross section for $A\langle y_0,\ldots,y_{n-1}\rangle$, so it is enough to show that $y_n \not\asymp a$ for any $a$ in this cross section. Suppose towards contradiction that we can find $\delta \in \Delta$, $r_0,\ldots,r_{n-1} \in \R$, and $u \in K$ with $u\asymp 1$ such that
\[
uy_n\ =\ \exp(\delta)y_0^{r_0}\cdots y_{n-1}^{r_{n-1}}.
\]
Taking logarithms gives
\[
\log u + \log y_n\ =\ \delta +r_0 \log y_0+ \cdots + r_{n-1}\log y_{n-1}.
\]
Set $a:= \delta +r_0 d_0+\cdots+r_{n-1} d_{n-1}-d_n \in A$, so 
\[
a+r_0 \epsilon_0 y_1+ \cdots + r_{n-1} \epsilon_{n-1}y_n - \epsilon_n y_{n+1}\ =\ \log u.
\]
Since $y_m \succ \log y_m \succeq y_{m+1}$ and $y_m \not\asymp a$ for each $m$, we have
\[
y_{n+1}\ \preceq \ a+r_0 \epsilon_0 y_1+ \cdots + r_{n-1} \epsilon_{n-1}y_n - \epsilon_n y_{n+1}\ =\ \log u\ \preceq\ 1,
\]
contradicting that $y_{n+1}$ is infinite. This concludes the proof of Claim 1.

Now, set $\gamma_n:= \log y_n$ for each $n$ and set 
\[
\Gamma \ :=\ \Delta + \R \gamma_0 + \R\gamma_1+\cdots.
\]
\textbf{Claim 2:} We claim that there is an $\cL\uf$-elementary embedding
\[
\jmath: A\langle y_0,y_1,\ldots\rangle\to \R((t^\Gamma))_{\On}
\]
which extends $\imath$ and which sends $y_n = \exp \gamma_n$ to $t^{\gamma_n}$ for each $n$. Suppose that we have an $\cL\uf$-elementary embedding $A\langle y_0,\ldots,y_{n-1}\rangle\to \R((t^\Gamma))_{\On}$ which extends $\imath$ and which sends $y_m$ to $ t^{\gamma_m}$ for each $m<n$. By Claim 1 and Fact~\ref{wilkieineq}, the class
\[
\big\{\exp(\delta)y_0^{r_0}\cdots y_{n-1}^{r_{n-1}}: \delta \in \Delta\text{ and }r_0,\ldots,r_{n-1} \in \R\big\}
\] 
is a cross section for $A\langle y_0,\ldots,y_{n-1}\rangle$. Given $\delta \in \Delta$ and $r_0,\ldots,r_{n-1} \in \R$, we have
\begin{align*}
\exp(\delta)y_0^{r_0}\cdots y_{n-1}^{r_{n-1}} < y_n\ &\Longleftrightarrow\ \delta+r_0\log y_0+\cdots+r_{n-1} \log y_{n-1}< \log y_n\\ &\Longleftrightarrow\ \delta+r_0\gamma_0+\cdots+r_{n-1} \gamma_{n-1}< \gamma_n\ \Longleftrightarrow\ t^{\delta+r_0\gamma_0+\cdots+r_{n-1} \gamma_{n-1}}< t^{\gamma_n}.
\end{align*}
Since $y_n \not\asymp a$ for any $a \in A\langle y_0,\ldots,y_{n-1}\rangle$ by Claim 1, this is enough to show that $y_n$ realizes the same cut over $A\langle y_0,\ldots,y_{n-1}\rangle$ that $t^{\gamma_n}$ realizes over its image. By o-minimality, we may extend our embedding $A\langle y_0,\ldots,y_{n-1}\rangle\to \R((t^\Gamma))_{\On}$ to an embedding $A\langle y_0,\ldots,y_n\rangle\to \R((t^\Gamma))_{\On}$ by sending $y_n$ to $t^{\gamma_n}$. This concludes the proof of Claim 2.

Let $B:= A\langle y_0,y_1,\ldots\rangle$. It remains to establish that $(B,\Gamma,\jmath)$ is a transserial development triple. Since the class $\imath(A) \cup \{t^{\gamma_0},t^{\gamma_1},\ldots\}$ is truncation closed and 
\[
\jmath(B)\ =\ \imath(A)\langle t^{\gamma_0},t^{\gamma_1},\ldots\rangle,
\]
the image $\jmath(B)$ is truncation closed by Lemma~\ref{tclosed}. Let $n$ be fixed. By definition, we have
\[
\jmath(\exp \gamma_n)\ =\ \jmath(y_n)\ =\ t^{\gamma_n}. 
\]
Since $\gamma_n= \log y_n = \epsilon_n y_{n+1}+ d_n$, we have
\[
\jmath(\gamma_n) \ =\ \epsilon_n \jmath(y_{n+1})+ D^\imath_A(\log y_n)\ =\ \epsilon_n t^{\gamma_{n+1}}+ D^\imath_A(\log y_n).
\]
Let $\delta \in\supp\big(D^\imath_A(\log y_n)\big)$. We claim that $\delta> \gamma_{n+1}$. By definition of $D^\imath_A(\log y_n)$, we have
\[
\log y_n - \imath\inv\big(D^\imath_A(\log y_n)\big) \ \prec\ \imath\inv(t^\delta)\ =\ \exp \delta,
\]
so $y_{n+1} \prec \exp \delta$. Taking logarithms gives $\gamma_{n+1} =\log y_{n+1} < \delta$, as desired. Since $\gamma_{n+1}> 0$, this tells us that $\jmath(\gamma_n)$ is purely infinite. It follows that $\imath(\gamma)$ is purely infinite for each $\gamma \in \Gamma$, so $(B,\Gamma,\jmath)$ is a development triple. Moreover, since $\gamma_{n+1}\not\in \Delta$, this also tells us that $D^\imath_A(\log y_n)$ is the maximal truncation of $\jmath(\gamma_n)$ contained in $\R((t^\Delta))_{\On}$, that is, 
\[
D^\imath_A(\log y_n) \ =\ \jmath(\gamma_n)_{\R((t^\Delta))_{\On}}.
\]
Thus,
\[
\big|\log t^{\gamma_n}- (\log t^{\gamma_n})_{\R((t^\Delta))_{\On}}\big|\ =\ \big|\log t^{\gamma_n}- \jmath(\gamma_n)_{\R((t^\Delta))_{\On}}\big|\ =\ \big|\log t^{\gamma_n}- D^\imath_A(\log y_n) \big|\ =\ t^{\gamma_{n+1}}
\]
for each $n$, and we see that $(t^{\gamma_n})_{n\in \N}$ is a $\Delta$-atomic $\Delta$-path. Since $\R((t^\Delta))_{\On}$ is a transserial Hahn field and $\Gamma = \Delta_{(t^{\gamma_n})}$, Lemma~\ref{lem-existsatomic2} gives that $\R((t^\Gamma))_{\On}$ is also a transserial Hahn field.
\end{proof}

\begin{corollary}
\label{anexpembedding}
$K$ admits a truncation closed, cross sectional $\cL\uf$-elementary logarithmic field embedding into a transserial Hahn field $\R((t^\Gamma))_{\On}$.
\end{corollary}
\begin{proof}
We need to find a transserial development triple $(K,\Gamma,\jmath)$, for then $\jmath:K\to \R((t^\Gamma))_{\On}$ will be such an embedding. By Lemma~\ref{expdevelopmentclosed}, any such triple is an $\exp$-development triple. Note that $(\R,\{0\}, \operatorname{id})$ is a transserial development triple, where $ \operatorname{id}$ is the identity map on $\R$, so it suffices to show that any transserial development triple $(A,\Delta,\imath)$ with $A \neq K$ can be extended to a strictly larger transserial development triple. Let $(A,\Delta,\imath)$ be given. If $D_A(y) \not\in A$ for some $y \in K\setminus A$, then we use Lemma~\ref{anexpembedding}. If $(A,\Delta,\imath)$ is not an $\exp$-development triple, then there is $a \in A$ with $a- \delta \succ 1$ for all $\delta \in \Delta$, so we may use Lemma~\ref{anexpembedding2}. If $(A,\Delta,\imath)$ is an $\exp$-development triple and $D_A(x) \in A$ for all $x \in K\setminus A$, then we take any $y \in K\setminus A$ and apply Proposition~\ref{molecularext}.
\end{proof}

\begin{proof}[Proof of Theorem~\ref{initialexponentialWeier}]
By Corollary~\ref{anexpembedding}, we may identify $K$ with a truncation closed, cross sectional $\cL\uf$-elementary logarithmic subfield of a transserial Hahn field $\R((t^\Gamma))_{\On}$. Corollary~\ref{corembedhahn} provides an initial transserial embedding $\R((t^\Gamma))_{\On}\to \No$. Since transserial embeddings preserve infinite sums and logarithms, the restriction of this embedding to $K$ is an $\cL_{\cF\df,\exp}$-embedding. The restriction of this embedding to $K$ is initial by Proposition~\ref{Prop. 2} and $\cL\ufexp$-elementary by Proposition~\ref{qeexp}.
\end{proof}

\subsection{Examples}
We collect below some consequences of Theorem~\ref{initialexponentialWeier}.
First, if $\cF = \cC^\omega_r$, then $T\uf$ defines a convergent Weierstrass system, so we have the following:

\begin{corollary}\label{allanalytic}
Let $\R\anexp$ be the expansion of $\R$ by all restricted analytic functions and the total exponential function and let $\cL\anexp$ be the corresponding language.
If $K$ is elementarily equivalent to $\R\anexp$ then $K$ admits an initial $\cL\anexp$-elementary embedding into $\No$.
\end{corollary}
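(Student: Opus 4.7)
The plan is simply to verify that both hypotheses of Theorem~\ref{initialexponentialWeier} hold for $\cF = \cC^\omega_r$ and then invoke the theorem. First I would identify the relevant structures and languages: taking $\cF$ to be all of $\cC^\omega_r$ makes $\R\uf$ equal to $\R\an$, and hence $\cL\uf = \cL\an$, $\cL\ufexp = \cL\anexp$, and $T\ufexp = T(\R\anexp)$. Moreover, because every restricted analytic function is already a basic symbol of $\cL\uf$ in this case, the collection $\cF\df$ of $0$-definable restricted analytic functions coincides with $\cC^\omega_r$ itself; so the hypothesis $\overline{\exp} \in \cF\df$ is immediate.

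The remaining hypothesis is that $T\uf$ defines a convergent Weierstrass system. Here I would appeal to the classical analysis of $\R\an$: the family $\{f(a+x) : \bar{f} \in \cC^\omega_r,\ a \in I^n\}$ consists of all germs at points of $I^n$ of real analytic functions on open neighborhoods of $I^n$, which is the prototypical convergent Weierstrass system in the sense of~\cite{vdD}. Thus the two bulleted hypotheses of Theorem~\ref{initialexponentialWeier} are satisfied.

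With both hypotheses verified, Theorem~\ref{initialexponentialWeier} applied to any $K \equiv \R\anexp = \R\ufexp$ produces an initial $\cL\ufexp$-elementary embedding $K \to \No$, and since $\cL\ufexp$ coincides with $\cL\anexp$, this is the desired initial $\cL\anexp$-elementary embedding.

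There is no substantive obstacle to overcome in this corollary: all of the technical work, in particular the construction of $\exp$-development triples and the molecularity bookkeeping needed to feed condition~(vi) of Theorem~\ref{T1}, is absorbed into Theorem~\ref{initialexponentialWeier} itself. The role of the present statement is simply to spell out that its hypotheses are met in the maximal case $\cF = \cC^\omega_r$, thereby recovering Fornasiero's result~\cite{F2} as an immediate consequence of the general framework.
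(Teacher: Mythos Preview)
Your proposal is correct and follows exactly the paper's approach: the paper simply notes, in the sentence preceding the corollary, that when $\cF = \cC^\omega_r$ the theory $T\uf$ defines a convergent Weierstrass system, and then the corollary follows from Theorem~\ref{initialexponentialWeier}. You supply a bit more detail (e.g., explicitly checking $\overline{\exp}\in\cF\df$), but the argument is identical in substance.
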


By~\cite[5.4]{vdD}, the collection of all \emph{differentially algebraic} analytic functions which converge in a neighborhood of 0 form a convergent Weierstrass system. This provides another example:

\begin{corollary}
Let $\R_{\operatorname{da},\exp}$ be the expansion of $\R$ by all differentially algebraic restricted analytic functions and the total exponential function and let $\cL_{\operatorname{da},\exp}$ be the corresponding language.
If $K$ is elementarily equivalent to $\R_{\operatorname{da},\exp}$ then $K$ admits an initial $\cL_{\operatorname{da},\exp}$-elementary embedding into $\No$.
\end{corollary}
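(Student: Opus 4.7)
The plan is to read this off directly from Theorem~\ref{initialexponentialWeier}, after choosing $\cF$ appropriately. Let $\cF \subseteq \cC^\omega_r$ be the family of restricted analytic functions $\bar f$ whose underlying analytic function $f$ is differentially algebraic over $\R$. Under the obvious identifications one has $\R\uf = \R_{\operatorname{da}}$, $\cL\uf = \cL_{\operatorname{da}}$, and consequently $\R\ufexp = \R_{\operatorname{da},\exp}$ and $\cL\ufexp = \cL_{\operatorname{da},\exp}$. So any model $K \equiv \R_{\operatorname{da},\exp}$ is a model of $T\ufexp$, and an initial $\cL\ufexp$-elementary embedding $K \to \No$ is the same data as an initial $\cL_{\operatorname{da},\exp}$-elementary embedding.

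To apply Theorem~\ref{initialexponentialWeier} I would verify its two hypotheses in turn. First, $T\uf$ defines a convergent Weierstrass system: this is exactly the statement cited from~\cite[5.4]{vdD} immediately preceding the corollary, namely that the analytic germs at $0$ of differentially algebraic analytic functions form a convergent Weierstrass system (the relevant closure properties --- sums, products, partial derivatives, implicit definitions, and composition --- all preserve differential algebraicity). Second, $\overline{\exp} \in \cF\df$: the real exponential satisfies the polynomial differential relation $y' - y = 0$, so it is differentially algebraic, whence $\overline{\exp} \in \cF$; since every primitive of $\cL\uf$ is automatically $0$-definable in $\R\uf$, we also have $\overline{\exp} \in \cF\df$.

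With both hypotheses in place, Theorem~\ref{initialexponentialWeier} produces, for any $K \equiv \R\ufexp$, an initial $\cL\ufexp$-elementary embedding $K \to \No$; translating back along the identification of languages gives the required initial $\cL_{\operatorname{da},\exp}$-elementary embedding. There is no real obstacle here --- the whole work has been done upstream in Theorem~\ref{initialexponentialWeier} and in the convergent-Weierstrass-system property of the differentially algebraic analytic germs. The only thing to be careful about is the bookkeeping between $\cF$, $\cF\df$, and the primitives of $\cL_{\operatorname{da},\exp}$; but since compositions and partial derivatives of differentially algebraic functions remain differentially algebraic, $\cF\df$ consists of differentially algebraic restricted analytic functions and the two languages have the same $0$-definable sets, which is all that is needed.
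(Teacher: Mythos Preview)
Your proposal is correct and follows the same route as the paper: invoke \cite[5.4]{vdD} to see that the differentially algebraic restricted analytic functions define a convergent Weierstrass system, note that $\overline{\exp}$ is among them, and apply Theorem~\ref{initialexponentialWeier}. The paper leaves these verifications implicit, simply presenting the corollary after citing the Weierstrass-system fact; your added remarks on $\overline{\exp}\in\cF\df$ and the bookkeeping are fine but not strictly needed.
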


By~\cite{vdD}, if $\cF = \big\{\overline{\exp},\overline{\sin},r\in \R\big\}$, then $T\uf$ defines a convergent Weierstrass system (where, as the reader will recall, $\overline{\exp}$ and $\overline{\sin}$ are the restrictions of $\exp$ and $\sin$ to the interval $[-1,1]$). The domains of $\exp$ and $\sin$ here don't matter, so long as they are closed intervals. This gives us the following:

\begin{corollary}\label{trigexpcor}
Let $\R_{\operatorname{trig},\exp}$ be the expansion of $\R$ by $\sin\restriction_{[0,2\pi]}$, a constant for each $r \in \R$, and the total exponential function and let $\cL_{\operatorname{trig},\exp}$ be the corresponding language.
If $K$ is elementarily equivalent to $\R_{\operatorname{trig},\exp}$ then $K$ admits an initial $\cL_{\operatorname{trig},\exp}$-elementary embedding into $\No$.
\end{corollary}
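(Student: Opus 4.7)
The plan is to deduce the corollary from Theorem~\ref{initialexponentialWeier} by an appropriate choice of $\cF$, and then to argue that $\cL_{\operatorname{trig},\exp}$ agrees with $\cL\ufexp$ up to $\emptyset$-interdefinability. Take $\cF := \{\overline{\exp},\overline{\sin}\}$, the restrictions of $\exp$ and $\sin$ to $[-1,1]$. By the result of van den Dries recalled in the paragraph immediately preceding the corollary, $T\uf$ then defines a convergent Weierstrass system, and $\overline{\exp}\in\cF\df$ holds by construction. Theorem~\ref{initialexponentialWeier} accordingly supplies, for every model of $T\ufexp$, an initial $\cL\ufexp$-elementary embedding into $\No$.

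It remains to check that $\R_{\operatorname{trig},\exp}$ and $\R\ufexp$ are $\emptyset$-interdefinable. In $\R\ufexp$ the restricted cosine $\overline{\cos}\colon [-1,1]\to\R$ is $\emptyset$-definable as $\overline{\cos}(y):=\sqrt{1-\overline{\sin}(y)^2}$, the sign of the square root being correct since $[-1,1]\subseteq[-\pi/2,\pi/2]$. Partitioning $[0,2\pi]$ into finitely many pieces $[a_i,a_i+h_i]$ with $h_i\leq 1$ and $a_i\in\R$ available as named constants, the identity
\[
\sin x\ =\ (\sin a_i)\,\overline{\cos}(x-a_i) + (\cos a_i)\,\overline{\sin}(x-a_i)\qquad(x\in[a_i,a_i+h_i]),
\]
together with the convention of returning $0$ outside $[0,2\pi]$, shows that $\sin\restriction_{[0,2\pi]}$ is $\emptyset$-definable in $\R\ufexp$. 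The reverse direction is immediate, as $\overline{\sin}$ is a $\emptyset$-definable restriction of $\sin\restriction_{[0,2\pi]}$ inside $\R_{\operatorname{trig},\exp}$.

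Given any $K\equiv\R_{\operatorname{trig},\exp}$, the unique $\cL\ufexp$-expansion $K'$ provided by this interdefinability satisfies $K'\equiv\R\ufexp$; by the first paragraph, there is then an initial $\cL\ufexp$-elementary embedding $K'\to\No$. Because the interdefinitions are $\emptyset$-defined on both sides, the same underlying map is an $\cL_{\operatorname{trig},\exp}$-elementary embedding on the reduct $K$, and it remains initial since initiality is a property of the ordered field structure alone. The only step requiring genuine care is the piecewise definition of $\sin\restriction_{[0,2\pi]}$ from $\overline{\sin}$ displayed above; this is, however, a routine exercise in trigonometric bookkeeping and presents no real obstacle once Theorem~\ref{initialexponentialWeier} is in hand.
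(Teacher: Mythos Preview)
Your approach is correct and essentially the same as the paper's: both deduce the corollary directly from Theorem~\ref{initialexponentialWeier} via van den Dries's result that the family generated by $\overline{\exp}$ and $\overline{\sin}$ (together with the real constants) yields a convergent Weierstrass system. The paper disposes of the domain mismatch between $[-1,1]$ and $[0,2\pi]$ in one line (``The domains of $\exp$ and $\sin$ here don't matter, so long as they are closed intervals''), whereas you spell it out explicitly via the addition formula --- that extra detail is fine and arguably clarifies what the paper is asserting.

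One small slip worth fixing: you set $\cF := \{\overline{\exp},\overline{\sin}\}$, omitting the real constants, but then your piecewise definition of $\sin\restriction_{[0,2\pi]}$ invokes ``$a_i\in\R$ available as named constants,'' and your interdefinability claim requires all reals (which are primitive constants of $\cL_{\operatorname{trig},\exp}$) to be $\emptyset$-definable in $\R\ufexp$. With $\cF$ as you wrote it, $\cL\ufexp$ has no such constant symbols, and it is not clear that every real is $\emptyset$-definable there. The paragraph you cite actually takes $\cF = \{\overline{\exp},\overline{\sin},r\in\R\}$; if you do the same, both the appeal to van den Dries and your interdefinability argument go through verbatim.
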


The method that van den Dries uses in the case $\cF = \big\{\overline{\exp},\overline{\sin},r\in \R\big\}$ has been generalized by Sfouli, who provides sufficient conditions on $\cF$ under which $T\uf$ defines a convergent Weierstrass system~\cite{S}:

\begin{lemma}[Sfouli]
Suppose that $\cF$ satisfies the following two properties:
\begin{enumerate}[(i)]
\item If $\bar{f}:I^n\to \R$ is in $\cF$, then there is $\bar{g}:I^n\to \R$ in $\cF$ such that either $\bar{f}+i\bar{g}$ or $\bar{g}+i\bar{f}$ is holomorphic on the interior of $I^{2n}\subseteq \C^n$.
\item If $\bar{f}:I^n\to \R$ is in $\cF$, then there is $\epsilon \in (0,1)$ and $\bar{g}:I^n\to \R$ in $\cF$ such that $\bar{f}(x) = \bar{g}(\epsilon x)$ for all $x \in I^n$.
\end{enumerate}
Then $T\uf$ defines a convergent Weierstrass system.
\end{lemma}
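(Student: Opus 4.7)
The plan is to verify directly that the family $\mathcal{W} = (\mathcal{W}_n)_{n \geq 0}$ --- where $\mathcal{W}_n$ is the set of germs at $0$ of the functions $x \mapsto f(a+x)$, for $\bar{f} \in \cF\df$ an $n$-ary restricted analytic function and $a \in I^n$ --- satisfies the axioms of a convergent Weierstrass system in the sense of~\cite{vdD}. These axioms require each $\mathcal{W}_n$ to be a subring of the convergent power series ring $\R\{x_1,\ldots,x_n\}$ that contains the coordinates and is closed under permutation of variables, inclusion of dummy variables, partial differentiation, and Weierstrass division by an $x_n$-regular element.

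The first step is to use (i) and (ii) in tandem to secure a genuine complex-analytic margin beyond $I^n$. By iterating condition (ii), every $\bar{f} \in \cF\df$ can be written as $\bar{g}(\epsilon\,\cdot\,)$ for some $\bar{g} \in \cF\df$ with $\epsilon \in (0,1)$ arbitrarily small, so the ambient real-analytic function $f$ extends to a real-analytic function on the open polydisk $\epsilon^{-1} I^n \supsetneq I^n$. Condition (i) applied to such a $\bar{g}$ then yields a holomorphic extension of $f$ to a complex polydisk strictly containing $I^n \subseteq \R^n$, putting the Cauchy integral formula at our disposal on contours that stay safely inside the domain of holomorphy.

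The routine axioms are then immediate. Ring structure, inclusion of polynomials, permutation of variables, translation by $a\in I^n$, and inclusion of dummy variables all follow from the fact that $\cF\df$ is closed under $0$-definable operations in the o-minimal structure $\R\uf$. Closure under partial differentiation is similarly automatic: each partial $\partial_i f$ is real-analytic on the same neighborhood of $I^n$ as $f$, and $\overline{\partial_i f}$ is $0$-definable from $\bar{f}$ in $\R\uf$ as a difference-quotient limit, whence $\overline{\partial_i f} \in \cF\df$.

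The main obstacle, and the step that genuinely requires both (i) and (ii), is closure under Weierstrass division. Given $\bar{f}, \bar{g} \in \cF\df$ with $g$ regular of order $d$ in $x_n$ at the origin, the classical complex-analytic proof of Weierstrass division produces the quotient $q$ and remainder $r$ in the identity $f = qg + r$ (with $\deg_{x_n} r < d$) via Cauchy integrals of the shape
\[
q(x',x_n)\ =\ \frac{1}{2\pi i}\oint_{|\zeta| = \rho} \frac{f(x',\zeta)}{g(x',\zeta)(\zeta - x_n)}\, d\zeta,
\]
with an analogous expression for $r$, where $\rho$ is chosen so that the contour lies strictly inside the domain of holomorphy supplied by Step 1 and $g(x',\zeta)$ has no zeros on it for $x'$ near $0$. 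Because the integrands are $0$-definable from $\bar{f}$ and $\bar{g}$ and the integrals are uniform limits of Riemann sums, the resulting restricted analytic functions $\bar{q}$ and $\bar{r}$ are $0$-definable in $\R\uf$ and hence lie in $\cF\df$. This places the germ of $q$ in $\mathcal{W}_n$ and the germ of $r$ in $\mathcal{W}_{n-1}[x_n]$, completing the verification.
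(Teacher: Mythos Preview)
The paper does not prove this lemma; it is quoted from Sfouli~\cite{S} and used as a black box. Your outline follows the intended strategy (and that of van den Dries in~\cite{vdD} for the special case $\cF=\{\overline{\exp},\overline{\sin},r\in\R\}$): secure a holomorphic margin via (i) and (ii), then run complex Weierstrass division through the Cauchy integral.

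There is, however, a genuine gap in the last step. You assert that $\bar q,\bar r\in\cF\df$ because the Cauchy integrals are uniform limits of Riemann sums of $0$-definable integrands, but in an o-minimal expansion of $\R$ neither integration nor passage to uniform limits preserves definability in general: $\int_0^y(1+t^2)^{-1}\,dt=\arctan y$ is not semialgebraic, and $e^x$ is a uniform-on-compacta limit of its (semialgebraic) Taylor polynomials without being semialgebraic. The family of Riemann sums is indexed by $\N$, which is undefinable, so the limit cannot be taken inside the structure. What actually closes the argument is that a holomorphic integrand on a simply connected polydisk has a holomorphic primitive there, so the contour integral along a polygonal path reduces to finitely many evaluations of that primitive; the substantive work (in~\cite{vdD} via explicit antiderivatives for exponential--trigonometric integrands, and in~\cite{S} via condition~(i) applied to the primitive) is to show that the real and imaginary parts of these primitives again lie in $\cF\df$. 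Your sketch correctly identifies the Cauchy-integral mechanism but does not supply this step, which is precisely where (i) and (ii) do their real job.
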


Sfouli goes on to show that the family $\cF_{\operatorname{har}}$ of all restricted harmonic functions $\bar{f}:I^2\to \R$ satisfies these properties. While $\exp\restriction_{[-1,1]}$ is not in $\cF_{\operatorname{har}}$, it is in $\cF_{\operatorname{har}}\df$ since it can be obtained by evaluating the harmonic function $e^x\cos(y)$ at $y=0$. Thus, we have the following:

\begin{corollary}
Let $\R_{\operatorname{har},\exp}$ be the expansion of $\R$ by all restricted harmonic functions and the total exponential function and let $\cL_{\operatorname{har},\exp}$ be the corresponding language.
If $K$ is elementarily equivalent to $\R_{\operatorname{har},\exp}$ then $K$ admits an initial $\cL_{\operatorname{har},\exp}$-elementary embedding into $\No$.
\end{corollary}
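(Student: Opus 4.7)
The plan is a direct application of Theorem~\ref{initialexponentialWeier} to the family $\cF = \cF_{\operatorname{har}}$, so the work amounts to verifying its two hypotheses: that $T_{\cF_{\operatorname{har}}}$ defines a convergent Weierstrass system and that $\overline{\exp} \in \cF_{\operatorname{har}}^{\mathrm{df}}$. Once these are in hand, any $K \equiv \R_{\operatorname{har},\exp}$ satisfies $K \models T_{\cF_{\operatorname{har}},\exp}$ (after noting that the language $\cL_{\operatorname{har},\exp}$ and $\cL_{\cF_{\operatorname{har}},\exp}$ carve out the same structure on $\R$, since constants for all reals are present in $\R_{\operatorname{har},\exp}$), so Theorem~\ref{initialexponentialWeier} immediately yields the desired initial $\cL_{\operatorname{har},\exp}$-elementary embedding $K \to \No$.

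For the first hypothesis, I would invoke Sfouli's lemma as displayed just above. Condition (i) is satisfied because any real-valued harmonic function $\bar{f}:I^2\to\R$ admits a harmonic conjugate $\bar{g}$ on the interior of $I^2$ such that $\bar{f}+i\bar{g}$ is holomorphic there; this conjugate is in $\cF_{\operatorname{har}}$ (after truncation to $I^2$), giving the required pair. Condition (ii) is satisfied because the scaling $(x,y)\mapsto (\epsilon x,\epsilon y)$ takes harmonic functions to harmonic functions, so for any $\bar{f}\in\cF_{\operatorname{har}}$ the function $\bar{g}(x,y):=\bar{f}(x/\epsilon,y/\epsilon)$ (extended harmonically past $I^2$ and then re-restricted) lies in $\cF_{\operatorname{har}}$ and satisfies $\bar{f}(x)=\bar{g}(\epsilon x)$ on $I^2$. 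Thus the two hypotheses of Sfouli's lemma hold for $\cF_{\operatorname{har}}$.

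For the second hypothesis, I would argue, as the excerpt indicates, that the real part of the entire function $e^{x+iy}$ is the harmonic function $u(x,y)=e^x\cos y$ on $\R^2$; its restriction $\bar{u}$ to $I^2$ is therefore in $\cF_{\operatorname{har}}$. The formula $\overline{\exp}(x)=\bar{u}(x,0)$ on $I=[-1,1]$ then exhibits $\overline{\exp}$ as $0$-definable from $\bar{u}$ in $\R_{\cF_{\operatorname{har}}}$, so $\overline{\exp}\in \cF_{\operatorname{har}}^{\mathrm{df}}$.

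I do not anticipate a genuine obstacle here: Theorem~\ref{initialexponentialWeier} has been proved in the preceding subsection in the required generality, and the corollary is essentially a matter of bookkeeping once Sfouli's criterion has been verified and $\overline{\exp}$ identified as a $0$-definable restriction of $e^x\cos y$. The only minor point worth being careful about is the compatibility of the two languages (ensuring that model-theoretic expansions and definable-closure operations behave the same way in $\R_{\operatorname{har},\exp}$ and in its presentation as $\R_{\cF_{\operatorname{har}},\exp}$), which follows since both structures have the same $0$-definable functions on $\R$, hence the same complete theory up to a definitional extension, and so any $K\equiv \R_{\operatorname{har},\exp}$ has a canonical expansion to a model of $T_{\cF_{\operatorname{har}},\exp}$ to which Theorem~\ref{initialexponentialWeier} applies.
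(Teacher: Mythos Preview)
Your proposal is correct and follows the same route as the paper: verify the two hypotheses of Theorem~\ref{initialexponentialWeier} for $\cF=\cF_{\operatorname{har}}$ and then apply it. The only difference is one of presentation: the paper simply cites Sfouli for the fact that $\cF_{\operatorname{har}}$ satisfies the two conditions of his lemma, whereas you sketch the verification yourself (your sketch of condition~(ii) is slightly garbled---the correct point is that since the open domain $U$ of $f$ contains the compact set $I^2$, one may choose $\epsilon<1$ close enough to $1$ that $(1/\epsilon)I^2\subseteq U$, and then $g(y):=f(y/\epsilon)$ is harmonic on the neighborhood $\epsilon U$ of $I^2$; no ``harmonic extension'' is needed).
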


\section{Trigonometric fields and surcomplex exponentiation} \label{sec:TE}
At the \emph{mini-workshop on surreal numbers, surreal analysis, Hahn fields and derivations} held in Oberwolfach in 2016, the following question was raised: ``Let $i=\sqrt{-1}$. Is there a good way to introduce $\sin$ and $\cos$ on $\No$ and an exponential map on $\No[i]$?''~\cite[page 3315]{BEK}. In this section we make some observations related to this question. 

Let $T_{\operatorname{trig}}$ be the theory of the real field expanded by $\sin\restriction_{[0,2\pi]}$ and $\cos\restriction_{[0,2\pi]}$. We call a model 
\[
\big(K,\sin\restriction_{[0,2\pi]},\cos\restriction_{[0,2\pi]}\big) \models T_{\operatorname{trig}}
\]
a \textbf{trigonometric ordered field}. Let $K$ be such a field. Then $K$ is real closed, so there is a discrete subring $Z \subseteq K$ such that for all $a\in K$ there is a $d \in Z$ with $d\leq a < d+1$. Following tradition, we call $Z$ an \textbf{integer part} of $K$. Using this integer part, we may extend sine and cosine to all of $K$ by setting
\[
\sin (a + 2\pi d)\ :=\ \sin a,\qquad\cos (a + 2\pi d)\ :=\ \cos a
\]
where $a \in [0, 2\pi)$ and where $d \in Z$. Since $K$ may have many integer parts, the extension of $\sin$ and $\cos$ to $K$ is not necessarily unique (indeed, if $\sin_1$ and $\sin_2$ arise from different integer parts, then they have different zero classes). However, in the case that $K$ is an initial trigonometric subfield of $\No$, there is a canonical choice of integer part, in view of the following lemma:

\begin{lemma}
Let $K$ be an initial subfield of $\No$. Then $\Oz \cap K$ is the unique integer part of $K$ which is an initial subtree of $\No$.
\end{lemma}
\begin{proof}
By~\cite[Theorem 20]{EH5}, $\Oz\cap K$ is an integer part of $K$ and an initial subtree of $\No$. Let $Z$ be an integer part of $K$ which is an initial subtree of $\No$. By~\cite[Theorem 6.5]{EK}, an initial subring of $\No$ is discrete if and only if it is a subring of $\Oz$, so $Z$ is a subring of $\Oz\cap K$. To see that $Z = \Oz\cap K$, let $a \in \Oz \cap K$ and take $d \in Z$ with $d\leq a < d+1$. The only element of $\Oz \cap K$ in the interval $(a-1,a]$ is $a$ itself, so $d$ is necessarily equal to $a$. Thus, $a \in Z$.
\end{proof}

\begin{proposition}\label{canonical1}
If $K$ is an initial trigonometric ordered subfield of $\No$, then $K$ admits unique sine and cosine functions arising from an initial integer part, namely, those arising from $\Oz \cap K$.
\end{proposition}

We refer to the sine and cosine functions in Proposition~\ref{canonical1} as the \textbf{canonical sine and cosine functions on $K$}.\footnote{The idea of employing $\Oz$ to define sine and cosine functions for $\No$ appears to originate with Martin Kruskal. The first author thanks Ovidiu Costin for bringing this to his attention.}

\subsection{Trigonometric-exponential fields}
By $T_{\operatorname{trig},\exp}$ we mean the theory of the real field expanded by $\sin\restriction_{[0,2\pi]}$, the total exponential function, and a constant symbol for each real number. We call a model 
\[
\big(K,\sin\restriction_{[0,2\pi]},\exp\big) \models T_{\operatorname{trig},\exp}
\]
a \textbf{trigonometric-exponential field}. Let $K$ be such a field. Then $\cos\restriction_{[0,2\pi]}$ is 0-definable in $K$, so $K$ may be naturally viewed as an expansion of a trigonometric ordered field.

Since $K$ is real closed, $K[i]$ is algebraically closed (where $i$ is a square root of $-1$). Let
\[
S_{ K} \ :=\ \big\{a+bi:a \in K, b\in [0,2\pi)_K\big\}\ \subseteq\ K[i].
\]
Then $S_{ K} $ admits a natural group structure given by addition of the real parts and addition modulo $2\pi$ of the imaginary parts. More precisely:
\[
(a+bi)+(c+di) \ =\ \left\{\begin{array}{ll} (a+c) + (b+d)i& \text{ if $b+d<2\pi$}\\ (a+c) + (b+d-2\pi)i& \text{ if $b+d\geq2\pi$.}\end{array}\right.
\]
The class $S_{ K} $, as well as its group structure is 0-definable in $K$, where we identify $K[i]$ with $K^2$ via the usual correspondence $a+bi\mapsto (a,b)$. The multiplication on $K[i]$ is also 0-definable in $K$, and we denote by $K[i]^\times$ the multiplicative group $K[i]\setminus \{0\}$. We define a map
\[
E: S_{ K} \to K[i]^\times,\qquad E(x+iy)\ =\ (\exp x)(\sin y+i\cos y).
\]
Then $E$ is also 0-definable in $K$, and so the $\cL_{\operatorname{trig},\exp}$-sentence ``$E$ is a group isomorphism'' is a consequence of $T_{\operatorname{trig},\exp}$, since it is true in $\R$.

We now fix an integer part $Z\subseteq K$ and extend $\sin$ and $\cos$ to all of $K$, as is done above. We define a map:
\[
a+ib\mapsto (\exp a)(\cos b+ i \sin b):K[i]\to K[i]^\times.
\]
Note that this map extends the map $\exp:K\to K^{>0}$, so we denote this map by $\exp$ as well. 
Using the fact that $E$ is a group isomorphism and that sine and cosine are periodic with period $2\pi$, we have the following:

\begin{proposition}
The map $\exp:K[i]\to K[i]^\times$ is a surjective group homomorphism with kernel $ 2\pi iZ$.
\end{proposition}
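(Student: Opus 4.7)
The plan is to realize $\exp:K[i]\to K[i]^\times$ as a composition $E\circ\pi$, where $\pi:K[i]\to S$ is the ``reduce the imaginary part modulo $2\pi$'' map, and then transport the three required properties (homomorphism, kernel $2\pi iZ$, surjectivity) through this factorization via the already-recorded fact that $E$ is a group isomorphism $S\to K[i]^\times$.

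First I would build $\pi$: given $a+bi\in K[i]$, apply the integer-part property of $Z$ to $b/(2\pi)\in K$ to write $b=b_0+2\pi d$ uniquely with $b_0\in[0,2\pi)_K$ and $d\in Z$, and set $\pi(a+bi):=a+b_0i\in S$. Surjectivity of $\pi$ is immediate, and its kernel is precisely $2\pi iZ$ (again by uniqueness of the decomposition). The homomorphism property requires a short case split: given $a+bi,c+di\in K[i]$ with $b=b_0+2\pi d_1$ and $d=d_0+2\pi d_2$, one has $b+d=(b_0+d_0)+2\pi(d_1+d_2)$; if $b_0+d_0<2\pi$ the reduction is already of the required form (using $d_1+d_2\in Z$), and if $b_0+d_0\geq 2\pi$ one rewrites as $(b_0+d_0-2\pi)+2\pi(d_1+d_2+1)$ (using $d_1+d_2+1\in Z$ since $Z$ is a ring). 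These two cases match exactly the two branches of the group law on $S$ recorded in the excerpt.

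Next I would verify $\exp=E\circ\pi$. Given $a+bi\in K[i]$ with $b=b_0+2\pi d$ as above, the defining $2\pi$-periodicity of the extended $\sin$ and $\cos$ yields $\sin b=\sin b_0$ and $\cos b=\cos b_0$, so
\[
\exp(a+bi)\ =\ \exp(a)(\cos b+i\sin b)\ =\ \exp(a)(\cos b_0+i\sin b_0)\ =\ E(a+b_0i)\ =\ E(\pi(a+bi)).
\]
The proposition then follows at once: $\exp$ is a homomorphism as the composition of two homomorphisms; it is surjective because both $E$ and $\pi$ are; and $\ker\exp=\pi\inv(\ker E)=\pi\inv(\{0\})=2\pi iZ$, using injectivity of $E$. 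The only mildly delicate point in the whole argument is the wrap-around case analysis for $\pi$, which is precisely where the hypothesis that $Z$ is a (discrete) subring, rather than merely an additive subgroup, is used.
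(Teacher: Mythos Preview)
Your proof is correct and is essentially a careful unpacking of the paper's one-line justification (``using the fact that $E$ is a group isomorphism and that sine and cosine are periodic with period $2\pi$''): the paper gives no further argument, and your factorization $\exp=E\circ\pi$ with $\pi$ the reduction-mod-$2\pi$ map is exactly the natural way to make that sentence precise. One minor remark: the wrap-around step only needs that $Z$ is an additive subgroup containing $1$, not the full ring structure.
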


Since the extension of $\exp$ depends on the extensions of sine and cosine, it depends on the choice of the integer part $Z$. However, we have the following corollary to Proposition~\ref{canonical1}:

\begin{corollary}\label{canonical2}
If $K$ is an initial trigonometric-exponential subfield of $\No$, then $K[i]$ admits a unique exponential function arising from an initial integer part, namely, that arising from $\Oz \cap K$.
\end{corollary}

We refer to the exponential in Corollary~\ref{canonical2} as the \textbf{canonical exponential function on $K[i]$}. Questions still remain as to how ``robust'' the canonical exponential function is:

\begin{open question*}
Let $K$ be an initial trigonometric-exponential subfield of $\No$ and let $\exp$ be any exponential on $K[i]$ which extends the map $E$ on $S_K$ and which satisfies the identity $\lVert\exp(a+bi)\rVert = \exp a$, where $\lVert a+bi\rVert:= \sqrt{a^2+b^2} \in K$ for $a+bi \in K[i]$. Then the class
\[
Z\ :=\ \big\{ z \in K[i]: \exp(y) = 1\, \Rightarrow\, \exp(yz) = 1\text{ for all } y\in K[i]\big\}
\]
is a discrete subring of $K$. Thus, if we require $Z$ to be \emph{initial}, then $Z$ is a subring of $\Oz \cap K$ by~\cite[Theorem 6.5]{EK}. 
Under what conditions is $Z$ an integer part of $K$? That is, under what conditions, including the requirement that $Z$ be initial, ensure that $\exp$ coincides with the canonical exponential function on $K[i]$?
\end{open question*}

By Corollary~\ref{trigexpcor} any trigonometric-exponential field $K$ admits an initial embedding into $\No$. However, this initial embedding may not be unique, so there may not be a way to equip $K[i]$ with a canonical exponential function in general. 

\begin{open question*}
Given a trigonometric-exponential field $K$ and an initial embedding $\imath:K\to \No$, let $Z_\imath:= \imath\inv(\On\cap K)$ and let $\exp_\imath$ be the exponential on $K[i]$ arising from the integer part $Z_\imath$. If $\imath,\jmath$ are different initial embeddings $K\to \No$, then how different can the exponentials $\exp_\imath$ and $\exp_\jmath$ be? Are they isomorphic? Do the exponential fields $(K[i],\exp_\imath)$ and $(K[i],\exp_\jmath)$ have the same first order theory?
\end{open question*}

\subsection{Surcomplex exponentiation}
For each ordinal $\alpha$, let
\[
\No(\alpha)\ :=\ \big\{x \in \No : \rho_{\No}(x)<\alpha\big\}.
\]

By Proposition~\ref{DE1}, $\No$ is a trigonometric-exponential field. Moreover, by~\cite[Corollary 5.5]{vdDE}, $\No(\alpha)$ is a trigonometric-exponential subfield of $\No$ whenever $\alpha$ is an \emph{epsilon number} (that is, whenever $\omega^\alpha = \alpha$). Furthermore, $\No(\alpha)$ is initial for each $\alpha$. Thus, in virtue of Corollary~\ref{canonical2}, we have the following:

\begin{theorem}
The surcomplex numbers $\No[i]$ admits a canonical exponential function with kernel $2\pi i\Oz$. Moreover, $\No(\alpha)[i]$ admits a canonical exponential function with kernel $2\pi i\big(\Oz\cap \No(\alpha)\big)$ for each epsilon number $\alpha$.
\end{theorem}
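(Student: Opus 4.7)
The plan is to assemble the theorem by stringing together the three ingredients that are laid out immediately before its statement. First I would invoke Proposition~\ref{DE1}, together with the extension of $\sin\restriction_{[0,2\pi]}$ and $\cos\restriction_{[0,2\pi]}$ to $\No$ afforded by quantifier elimination in $T_{\operatorname{trig},\exp}$, to certify that $\No$ itself is a trigonometric-exponential field in the sense of \S\ref{sec:TE}. For the relativised statement, I would cite~\cite[Corollary 5.5]{vdDE} to see that for every epsilon number $\alpha$ the subclass $\No(\alpha)$ is closed under $\exp$, $\log$, $\sin\restriction_{[0,2\pi]}$, and $\cos\restriction_{[0,2\pi]}$, so $\No(\alpha)$ inherits the structure of a trigonometric-exponential field. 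The remaining set-theoretic ingredient is the observation that $\No(\alpha)$ is an initial subfield of $\No$ for every ordinal $\alpha$: this is built into the definition $\No(\alpha)=\{x\in\No:\rho_{\No}(x)<\alpha\}$, since tree-rank predecessors of a surreal of rank $<\alpha$ again have rank $<\alpha$.

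With these facts in hand, Corollary~\ref{canonical2} applies directly. Taking $K=\No$ (respectively $K=\No(\alpha)$) as an initial trigonometric-exponential subfield of $\No$, the canonical integer part $Z=\Oz\cap K$ yields canonical extensions of $\sin$ and $\cos$ to all of $K$ via the periodicity recipe
\[
\sin(a+2\pi d)\ :=\ \sin a,\qquad \cos(a+2\pi d)\ :=\ \cos a\qquad (a\in[0,2\pi],\ d\in Z),
\]
and hence a canonical exponential function on $K[i]$ defined by $\exp(a+ib):=\exp a\,(\cos b+i\sin b)$. Specialising to $K=\No$ gives the canonical exponential on $\No[i]$, and specialising to $K=\No(\alpha)$ for an epsilon number $\alpha$ gives the canonical exponential on $\No(\alpha)[i]$.

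Finally, the kernel computation is read off from the Proposition preceding Corollary~\ref{canonical2}, which asserts that for any trigonometric-exponential field $K$ with integer part $Z$, the map $\exp:K[i]\to K[i]^\times$ is a surjective group homomorphism with kernel $2\pi iZ$. Substituting $Z=\Oz$ in the case $K=\No$ produces kernel $2\pi i\Oz$, and substituting $Z=\Oz\cap\No(\alpha)$ in the case $K=\No(\alpha)$ produces kernel $2\pi i\bigl(\Oz\cap\No(\alpha)\bigr)$, as required.

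Because every constituent has already been proved, there is no genuine obstacle left; the only point requiring a moment of care is confirming that $\Oz\cap\No(\alpha)$ really is an integer part of $\No(\alpha)$ when $\alpha$ is an epsilon number. This follows from Lemma~\ref{lemma 0} (which gives that $\Oz$ is an initial subtree of $\No$, hence $\Oz\cap\No(\alpha)$ is the ``restriction'' of the canonical integer part) combined with the closure properties from~\cite[Corollary 5.5]{vdDE} ensuring that for every $a\in\No(\alpha)$ there is $d\in\Oz\cap\No(\alpha)$ with $d\leq a<d+1$, so that the periodicity extension of $\sin$ and $\cos$ stays inside $\No(\alpha)$.
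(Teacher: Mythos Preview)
Your proposal is correct and follows essentially the same approach as the paper: invoke Proposition~\ref{DE1} and~\cite[Corollary 5.5]{vdDE} to see that $\No$ and each $\No(\alpha)$ (for $\alpha$ an epsilon number) are initial trigonometric-exponential subfields of $\No$, and then apply Corollary~\ref{canonical2}. Your added remarks on the kernel computation and on $\Oz\cap\No(\alpha)$ being an integer part simply make explicit what the paper leaves implicit.
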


\section{Initial embeddings of some distinguished fields of transseries}\label{sec:Tran}
Berarducci and Mantova~\cite{BM2} introduced the exponential ordered field $\R\langle\langle\omega\rangle \rangle$ of \emph{omega-series}. It is the smallest exponential subfield of $\No$ containing $\R$ and $\omega$ that is closed under $\exp$, $\log$ and taking infinite sums. They further isolated the exponential subfields $\R((\omega))^{LE}$ and $\R((\omega))^{EL}$ of $\No$ that are isomorphic to the exponential ordered fields of \emph{LE-series}~\cite{DMM, DMM1, Avv} and \emph{EL-series}~\cite{K, KS, KT}, respectively. The system of \emph{LE}-series in turn is isomorphic to the exponential ordered field $\T^{LE}$ of \emph{{logarithmic-exponential} transseries}. In this section, we prove that $\R\langle\langle\omega\rangle \rangle$, $\R((\omega))^{LE}$ and $\R((\omega))^{EL}$, which are models of $T (\R\anexp)$, and hence, models of $T_{\operatorname{trig},\exp}$, are initial. The methods employed for the proofs are different from the methods used in \S\ref{sec:IE} and \S\ref{sec:WS}, and only depend on material from the preliminary sections. 

For a subclass $X \subseteq \No$, we let $X^{\rc}$ be the smallest real closed subfield of $\No$ containing $X$. We say that $X$ is \textbf{closed under sums} if the sum of every summable sequences of elements in $X$ is contained in $X$. We let $X^\Sigma$ be smallest subclass of $\No$ which contains $X$ and which is closed under sums. For the rest of this section, let $K$ be an initial subfield of $\No$.

\begin{lemma}
\label{sumclosure}
$K^\Sigma$ is an initial subfield of $\No$. If $K$ is also real closed, then so is $K^\Sigma.$
\end{lemma}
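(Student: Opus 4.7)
The plan is to prove the equality $K^\Sigma = \R_K((\omega^\Gamma))_{\On}$, where $\Gamma$ denotes the value group of $K$ and $\R_K$ its coefficient field. Both parts of the lemma then follow very quickly: initiality from Proposition~\ref{Ehrlich initial 2}, and real closedness (when $K$ is real closed) from classical Hahn field theory. The bulk of the work is in verifying the identification; once it is established, no further delicate manoeuvring is required.

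For the inclusion $K^\Sigma \subseteq \R_K((\omega^\Gamma))_{\On}$, take $z = \sum_\alpha x_\alpha \in K^\Sigma$ for some summable family $(x_\alpha)$ drawn from $K$. Because $K$ is initial, Proposition~\ref{IF} gives that $K$ is a truncation closed, cross sectional subfield of $\R((\omega^\Gamma))_{\On}$. Applying truncation closedness to each $x_\alpha$ and subtracting consecutive truncations yields that every monomial $r\omega^y$ appearing in the normal form of any $x_\alpha$ lies in $K$, and then dividing by $\omega^y \in K$ shows the coefficient $r$ lies in $\R_K$. Summability of $(x_\alpha)$ then forces the coefficient of each $\omega^y$ in $z$ to be a finite sum of elements of $\R_K$, hence itself an element of $\R_K$, while the exponents of $z$ lie in $\Gamma$. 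Conversely, any $z = \sum_\alpha r_\alpha \omega^{y_\alpha} \in \R_K((\omega^\Gamma))_{\On}$ is the sum of the summable family of monomials $(r_\alpha\omega^{y_\alpha})$, each of which lies in $K$ because $r_\alpha \in \R_K \subseteq K$, $\omega^{y_\alpha} \in K$ by cross sectionality, and $K$ is a field; hence $z \in K^\Sigma$.

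Given the identification, initiality of $K^\Sigma$ is immediate. Since $K$ is initial, $\Gamma$ is an initial subgroup of $\No$, so the identity map $\Gamma \to \Gamma$ is an initial group embedding. Applying Proposition~\ref{Ehrlich initial 2} to this embedding shows that $K^\Sigma = \R_K((\omega^\Gamma))_{\On}$, viewed inside $\No$ via normal forms, is an initial subfield of $\No$.

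For the real closedness claim, suppose $K$ is real closed. Then $\Gamma$ is divisible (the value group of any real closed field is divisible, since $n$-th roots exist), and $\R_K$, which by definition agrees with the residue class field of $K$ with respect to the Archimedean valuation, is also real closed. By the classical theorem of MacLane--Kaplansky, any Hahn field over a real closed coefficient field with divisible ordered abelian value group is itself real closed, so $K^\Sigma = \R_K((\omega^\Gamma))_{\On}$ is real closed, as required. The only step demanding attention is the identification of $K^\Sigma$ with the full Hahn field; once that is secured, all remaining ingredients are either available in the paper or standard in the literature.
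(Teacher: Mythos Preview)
Your proof is correct and follows essentially the same route as the paper's: identify $K^\Sigma$ with the full Hahn field $\R_K((\omega^\Gamma))_{\On}$, invoke Proposition~\ref{Ehrlich initial 2} (which is~\cite[Theorem~18]{EH5}) for initiality, and use divisibility of $\Gamma$ and real closedness of $\R_K$ for the second claim. The only difference is that you spell out the two inclusions establishing $K^\Sigma = \R_K((\omega^\Gamma))_{\On}$ in detail, whereas the paper simply asserts the identification (citing Neumann's Lemma for the field structure, which in your treatment is absorbed into the Hahn field identification).
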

\begin{proof}
Using Neumann's Lemma, e.g.\!~\cite[pages 260-261]{AL}, we see that $K^\Sigma$ is indeed an ordered field. Let $\Gamma$ be the value group of $K$, so $\Gamma$ is initial and $K$ is a truncation closed, cross sectional subfield of $\R((\omega^\Gamma))_{\On}$ by~\cite[Theorem 18]{EH5}. Then $K^\Sigma$ is also a truncation closed, cross sectional subfield of $\R((\omega^\Gamma))_{\On}$, so $K^\Sigma$ is initial as well, again by~\cite[Theorem 18]{EH5}. If, in addition, $K$ is real closed, then $\R_K$ is real closed and $\Gamma$ is divisible. Thus, $K^\Sigma = \R_K((\omega^\Gamma))_{\On}$ is also real closed.
\end{proof}

\begin{lemma}
\label{Ehrlich initial 1}
If $K$ is real closed and $X$ is a subset of $\No$ each of whose members is the simplest element of $\No$ that realizes a cut in $K$, then $(K\cup X)^{\rc}$ is initial.
\end{lemma}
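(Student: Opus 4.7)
The plan is to enumerate $X = \{x_\alpha : \alpha < \lambda\}$ in non-decreasing order of tree-rank and build up the real closure stepwise by the transfinite recursion $K_0 := K$, $K_{\alpha+1} := (K_\alpha \cup \{x_\alpha\})^{\rc}$, and $K_\mu := \bigcup_{\beta < \mu} K_\beta$ at limit stages. An increasing union of initial real closed subfields of $\No$ is clearly initial, so the content lies in the successor step. I first observe that each $x_\alpha$ remains the simplest element realizing a cut in $K_\alpha$: any $z <_s x_\alpha$ with $z \neq x_\alpha$ realizing the cut of $x_\alpha$ over $K_\alpha$ would lie outside $K_\alpha$, hence outside $K$, and since $K \subseteq K_\alpha$ the inclusions $K^{<x_\alpha} \subseteq K_\alpha^{<x_\alpha}$ and $K^{>x_\alpha} \subseteq K_\alpha^{>x_\alpha}$ would force $z$ to realize the cut of $x_\alpha$ over $K$, contradicting the hypothesis on $X$.

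The successor step reduces to the following claim: if $F$ is an initial real closed subfield of $\No$ and $x \in \No \setminus F$ is the simplest surreal realizing a cut in $F$, then $F(x)^{\rc}$ is initial. My approach is via Proposition~\ref{Ehrlich initial 2}. Let $\Gamma$ be the value group of $F$, so $F$ is a truncation closed, cross sectional subfield of $\R((\omega^\Gamma))_{\On}$ with coefficient field $\R_F$, and let $D_F(x) \in F$ be the development of $x$ over $F$ (available by truncation closure). The element $x - D_F(x)$ is non-zero, so its Conway normal form has a leading term $r\omega^y$ with $y \notin \Gamma$. Using the fact that truncating a surreal's Conway normal form, or replacing a Conway exponent by a $<_s$-predecessor lying in the same cut of $\Gamma$, produces a strictly simpler surreal, I would deduce from the simplicity of $x$ in its cut that (a) $x = D_F(x) + r\omega^y$ with no further terms, and (b) $y$ is the simplest surreal realizing a cut in $\Gamma$. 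Lemma~\ref{newbasiselem}, applied over $\Q$ using the divisibility of $\Gamma$, then gives that $\Gamma \oplus \Q y$ is an initial divisible subgroup of $\No$; standard valuation-theoretic analysis of real closures over Hahn fields with real closed coefficient field identifies $F(x)^{\rc}$ with a truncation closed, cross sectional subfield of $\R((\omega^{\Gamma \oplus \Q y}))_{\On}$ with coefficient field $\R_F$. Proposition~\ref{Ehrlich initial 2} then delivers initiality of $F(x)^{\rc}$.

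The main obstacle will be establishing claims (a) and (b), which rest on a careful tree-rank analysis. For (a), one considers the truncation $x' := D_F(x) + r\omega^y$ and checks that for every $a \in F$ the difference $x - x'$ has valuation strictly less than $y$, so the leading term of $a - x'$ matches that of $a - x$, and hence $x'$ realizes the same cut over $F$ as $x$; simplicity of $x$ then forces $x = x'$. For (b), one carries out the analogous computation for $x'' := D_F(x) + r\omega^{y'}$ when $y' <_s y$ realizes the cut of $y$ over $\Gamma$: the fact that $y$ and $y'$ lie in the same cut of $\Gamma$ ensures that for $a \in F$ with $v(a - D_F(x)) \in \Gamma$, the signs of $a - x$ and $a - x''$ agree, again contradicting simplicity of $x$. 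A secondary technicality is verifying the Hahn-field structure of $F(x)^{\rc}$, for which truncation closure in particular requires invoking the Mourgues--Ressayre-style behavior of real closure in this very concrete setting where the tail of $x$ consists of a single monomial.
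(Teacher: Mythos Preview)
Your iteration strategy matches the paper's: reduce to the singleton case, for which the paper simply cites \cite[Theorem~7]{EH8}. Your verification that each $x_\alpha$ (if not already absorbed) remains simplest in its cut over the enlarged field $K_\alpha$ is correct. The substance of your proposal is therefore the attempted direct proof of the singleton case, and here there are two genuine gaps.

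First, your assertion that the leading exponent $y$ of $x - D_F(x)$ lies outside $\Gamma$ is unjustified. The lemma does not assume $\R \subseteq K$, so the coefficient field $\R_F$ may be a proper subfield of $\R$; one can then have $y \in \Gamma$ with leading coefficient $r \notin \R_F$ (e.g.\ $F$ the field of real algebraic numbers, $x = \pi$, $y = 0$). In that case your argument for (b) and the appeal to Lemma~\ref{newbasiselem} to obtain an initial value group $\Gamma \oplus \Q y$ both collapse, and your cross-sectionality check for $F(x)^{\rc}$ (which divides by $r$) fails as well. The applications later in the section all have $\R \subseteq K$, so the gap is harmless there, but the lemma is stated in greater generality.

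Second, even granting $y \notin \Gamma$, the ``fact'' that replacing the last Conway exponent $y$ by a $<_s$-predecessor $y'$ in the same $\Gamma$-cut yields a strictly simpler surreal is not immediate. Your cut computation correctly shows that $x'' = D_F(x) + r\omega^{y'}$ realizes the same cut over $F$ as $x$, but the contradiction requires $x'' <_s x$. That $\omega^{y'} <_s \omega^{y}$ can be extracted from the tree structure of the leaders (Lemma~\ref{lemma 1}), but passing to $D_F(x) + r\omega^{y'} <_s D_F(x) + r\omega^{y}$ demands a description of how the sign expansion of a Conway normal form depends on the exponents and coefficients of its individual terms --- precisely the analysis carried out in \cite{EH7,EH8} and invoked by the paper's citation. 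Your sketch acknowledges this as the main obstacle but does not supply it, and it is the crux of the singleton case.
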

\begin{proof}
This readily follows by iterating the result for the case where $X$ is a singleton established by the first author in~\cite[pages 8, 37-38, Theorem 7]{EH8}. 
\end{proof}

\begin{lemma}
\label{expstep}
If $K$ is an initial real closed subfield of $\No$ then $\big(K \cup \exp(K)\big)^{\rc}$ is initial.
\end{lemma}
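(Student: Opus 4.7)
My plan is to present $L := (K \cup \exp(K))^{\rc}$ as the union of a continuous chain $(L_\alpha)_{\alpha \le \lambda}$ of initial real-closed subfields of $\No$, with $L_0 = K$, by adjoining one element of $\exp(K)$ at a time and invoking Lemma~\ref{Ehrlich initial 1} (applied with a singleton $X$, iterated as in its proof) at each successor step. Because unions of continuous chains of initial subfields are initial, and the chain exhausts $K \cup \exp(K)$ and is closed under real closure at each stage, $L_\lambda = L$ will be initial. If $K$ is a proper class, I first write it as a continuous union of initial real-closed subfields whose universes are sets and apply the set case of the argument to each link.

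The first step is a reduction. Since $K$ is initial, Proposition~\ref{IF} realizes $K$ as a truncation-closed, cross-sectional subfield of a Hahn field, so every $x \in K$ decomposes as $x = x_P + x_R + x_I$ with $x_P \in K_{\PI}$, $x_R \in \R_K$, and $x_I \in K$ infinitesimal, each summand already in $K$. Then $\exp(x) = \exp(x_P)\, e^{x_R}\, \exp(x_I)$, and it suffices to adjoin the three families of factors in three successive phases.

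The substantial phase treats purely-infinite exponentials. I enumerate $K_{\PI}$ in order of increasing $<_s$-rank. At stage $\alpha$, with current initial real-closed $L_\alpha$ of value group $\Gamma_\alpha \supseteq \Gamma$, I pick the $<_s$-simplest $x \in K_{\PI}$ with $\exp(x) \not\in L_\alpha$; every $<_s$-simpler $y \in K_{\PI}$ was handled earlier, so $\exp(y) \in L_\alpha$, satisfying the hypothesis of Lemma~\ref{expincut} with $L_\alpha$ in place of $K$. That lemma gives $\exp(x) = \omega^{\gamma}$ where $\gamma$ is the simplest element of $\No$ realizing a cut in $\Gamma_\alpha$. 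To feed this into Lemma~\ref{Ehrlich initial 1} I must upgrade this: $\omega^{\gamma}$ itself must be simplest in $\No$ realizing its cut in $L_\alpha$. Here I combine the inductive definition $\omega^{\gamma} = \{0, n\omega^{\gamma^L} \mid \tfrac{1}{2^n}\omega^{\gamma^R}\}$ with Proposition~\ref{Tree 1}(iii). Since $\gamma \not\in \Gamma_\alpha$, no element of $L_\alpha$ is $\asymp$-comparable to $\omega^{\gamma}$, so any positive $y \in L_\alpha$ below $\omega^{\gamma}$ satisfies $y \preceq \omega^{\delta}$ for some $\delta \in \Gamma_\alpha^{<\gamma}$; the simplicity of $\gamma$ in $\Gamma_\alpha$ provides $\gamma^L <_s \gamma$ with $\delta \le \gamma^L < \gamma$, whence $y < n\omega^{\gamma^L}$ for some $n$, giving the required cofinality of the left options of $\omega^{\gamma}$ in $L_\alpha^{<\omega^{\gamma}}$. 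Coinitiality on the right is symmetric, so $\omega^{\gamma}$ is simplest, and $L_{\alpha+1} := (L_\alpha \cup \{\omega^{\gamma}\})^{\rc}$ is initial by Lemma~\ref{Ehrlich initial 1}.

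The remaining phases follow the same template. For $r \in \R_K$ with $e^r$ not yet in the current field, $e^r$ lies in $\R$ and is simplest in $\No$ realizing its cut in $L_\alpha$ because $\R$ is an initial subfield of $\No$ and, $L_\alpha$ being truncation-closed, no non-real element of $L_\alpha$ can fall into the same cut as a real outside the current coefficient subfield. For infinitesimal $x_I \in K$, $\exp(x_I) = \sum_{n} x_I^{n}/n!$ has an explicit Conway normal form that makes the analogous cofinality check go through. The main obstacle throughout is precisely this cofinality-based upgrade from simplicity in a distinguished substructure (the value group, the coefficient field, or the infinitesimal part) to simplicity in the ambient $L_\alpha$; once handled in the purely-infinite case it transfers essentially verbatim to the other two.
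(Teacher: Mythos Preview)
Your decomposition into purely infinite, real, and infinitesimal parts is unnecessary and makes the argument substantially harder than it needs to be. The paper proceeds by a single induction on the tree-rank of $x \in K$ using the \emph{full} recursive definition of $\exp$:
\[
\exp(x)\ =\ \Big\{0,(\exp x^L)[x-x^L]_n,(\exp x^R)[x-x^R]_{2n+1}\ \Big|\ \frac{\exp x^L}{[x^L-x]_{2n+1}},\frac{\exp x^R}{[x^R-x]_n}\Big\}.
\]
At stage $\alpha$, with $K_\alpha := \big(K\cup\{\exp y:y\in K,\ \rho_{\No}(y)<\alpha\}\big)^{\rc}$ assumed initial, every option above lies in $K_\alpha$: the bracket terms $[x-x^L]_n$ etc.\ are polynomials in elements of $K$, and $\exp x^L,\exp x^R\in K_\alpha$ by induction. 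Hence $\exp x$ is the simplest element realizing a cut in $K_\alpha$ and Lemma~\ref{Ehrlich initial 1} applies directly. No case split is needed, and no upgrade from ``simplest in the value group'' to ``simplest in the field'' is required.

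Your purely infinite phase is correct but circuitous: you invoke Lemma~\ref{expincut} to locate $\exp x$ as $\omega^\gamma$ with $\gamma$ simplest in $\Gamma_\alpha$, then separately argue that $\omega^\gamma$ is simplest in $L_\alpha$. The paper gets this in one stroke from the recursive formula. Your real phase is sketched but salvageable. The genuine weak point is the infinitesimal phase: ``the analogous cofinality check goes through'' is not justified, and there is no obvious analogue of Lemma~\ref{expincut} or of the $\R$-is-initial argument to appeal to. To show that $\sum_n x_I^n/n!$ is simplest realizing its cut in $L_\alpha$ you would need to identify its predecessors and verify cofinality, which in effect forces you back to the general recursive definition of $\exp$ anyway. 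So the decomposition buys nothing and leaves a gap precisely where the uniform argument would have closed it.
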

\begin{proof}
We prove this by induction on the tree-rank of $x \in K$. If $\rho_{\No}(x) = 0$, then $x = 0$ and $K \cup \{\exp(0)\} = K$ is initial by assumption. Fix $x \in K$ with $\rho_{\No}(x) = \alpha>0$ and suppose that 
\[
K_\alpha\ :=\ \big(K \cup \{\exp y:y \in K,\ \rho_{\No}(y)<\alpha\}\big)^{\rc}
\]
is initial. Then since $[x-x^L]_n$, $[x-x^R]_{2n+1}$, $\frac{1}{[x^R-x]_n}$, $\frac{1}{[x^L-x]_{2n+1}}$, $\exp x^L$, and $\exp x^R$ are all in $K_\alpha$, we see that $\exp x$ is the simplest element in $\No$ realizing a cut in $K_\alpha$. By Lemma~\ref{Ehrlich initial 1},
\[
K_{\alpha+1}\ =\ \big(K_\alpha \cup \{\exp x:x \in K,\ \rho_{\No}(x) = \alpha\}\big)^{\rc}
\]
is initial. Taking the union of the $K_{\alpha+1}$ over all $\alpha \in \{\rho_{\No}(x):x \in K\}$, we see that $\big(K \cup \exp(K)\big)^{\rc}$ is initial.
\end{proof}

\begin{lemma}
\label{logstep}
Suppose that $K$ is an initial real closed subfield of $\No$, that $K$ contains $\R$, and that $K$ is closed under sums. Then $\big(K \cup \log(K^{>0})\big)^{\rc}$ is initial.
\end{lemma}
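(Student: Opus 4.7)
The plan is to imitate the proof of Lemma~\ref{expstep}, with the Norton--Kruskal inductive formula for $\log\omega^y$ playing the role of the inductive formula for $\exp x$. Let $\Gamma$ be the value group of $K$; since $K$ is initial, $\Gamma$ is a divisible initial subgroup of $\No$.

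First I would reduce to the case of adjoining only leaders' logarithms. Every positive $x \in K$ has a normal-form decomposition $x = \omega^y r(1+\epsilon)$ where $y \in \Gamma$ (so $\omega^y \in K$), $r \in \R^{>0}$, and $\epsilon \in K$ is infinitesimal; moreover $\epsilon \in K$ because $\epsilon = x/(r\omega^y) - 1$ and $K$ is a field. Then
\[
\log x \ =\ \log\omega^y + \ln r + \sum_{k=1}^\infty \frac{(-1)^{k-1}\epsilon^k}{k}.
\]
The middle term lies in $\R \subseteq K$, and the tail is a summable series of elements of $K$, hence lies in $K^\Sigma = K$ by the closedness-under-sums hypothesis. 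Consequently
\[
\big(K \cup \log(K^{>0})\big)^{\rc}\ =\ \big(K \cup \{\log\omega^y : y \in \Gamma\}\big)^{\rc},
\]
so it is enough to prove that the right-hand side is initial.

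Next I would set up a transfinite induction on tree-rank. For each ordinal $\alpha$, define
\[
K_\alpha\ :=\ \big(K \cup \{\log\omega^y : y \in \Gamma,\ \rho_{\No}(y) < \alpha\}\big)^{\rc}.
\]
Then $K_0 = K$ is initial by hypothesis, and by Lemma~\ref{sumclosure} taking unions along continuous chains of initial real closed subfields preserves initiality. For the successor step, fix $y \in \Gamma$ with $\rho_{\No}(y) = \alpha$ and invoke the inductive formula
\[
\log\omega^y\ =\ \Big\{\log\omega^{y^L}+n,\ \log\omega^{y^R}-\omega^{\frac{y^R-y}{n}}\ \Big|\ \log\omega^{y^R}-n,\ \log\omega^{y^L}+\omega^{\frac{y-y^L}{n}}\Big\}
\]
recalled in \S\ref{sec:SE}. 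All options lie in $K_\alpha$: first, $y^L,y^R \in \Gamma$ because $\Gamma$ is initial in $\No$, so $\log\omega^{y^L}, \log\omega^{y^R} \in K_\alpha$ by the inductive hypothesis; second, $n \in \R \subseteq K$; and third, $\omega^{(y^R-y)/n},\omega^{(y-y^L)/n} \in K$ since $\Gamma$ is divisible and contains $y,y^L,y^R$. Hence whenever $\log\omega^y \notin K_\alpha$, Proposition~\ref{Tree 1}(iii) shows that $\log\omega^y$ is the simplest element of $\No$ realizing a cut in $K_\alpha$, so Lemma~\ref{Ehrlich initial 1} yields that $K_{\alpha+1}$ is initial. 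Stopping at any $\alpha$ beyond $\sup\{\rho_{\No}(y) : y \in \Gamma\}$ exhausts $\{\log\omega^y : y \in \Gamma\}$ and gives the conclusion.

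The only real obstacle is bookkeeping, and it is already handled by the hypotheses: $\R \subseteq K$ accounts for the $\ln r$ contribution, $K^\Sigma = K$ absorbs the logarithmic series in $\epsilon$, and divisibility of $\Gamma$ (plus initiality, which forces predecessors of $y$ to remain in $\Gamma$) guarantees that every option appearing in the Norton--Kruskal expression for $\log\omega^y$ already lies in the previously constructed stage $K_\alpha$. Once these points are checked, the argument is a direct transcription of the proof of Lemma~\ref{expstep}.
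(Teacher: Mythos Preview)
Your proof is correct and follows essentially the same approach as the paper: first reduce to adjoining only $\log\omega^\gamma$ for $\gamma\in\Gamma$ using the decomposition $x=r\omega^\gamma(1+\epsilon)$ together with $\R\subseteq K$ and $K=K^\Sigma$, then run a transfinite induction on $\rho_{\No}(\gamma)$ using the Norton--Kruskal formula for $\log\omega^\gamma$ and Lemma~\ref{Ehrlich initial 1}. One small slip: the limit step does not use Lemma~\ref{sumclosure} (which concerns closure under infinite sums); the fact that an increasing union of initial subfields is initial is immediate from the definition of initial subtree.
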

\begin{proof}
Let $\Gamma$ be the value group of $K$. Then $\Gamma$ is an initial divisible subgroup of $\No$. We first claim that $\log(K^{>0})\subseteq \big(K \cup \log(\omega^{\Gamma})\big)^{\rc}$. For $x \in K^{>0}$, we may write $x = r\omega^\gamma(1+\epsilon)$ for some $r \in \R^{>0}$, some $\gamma \in \Gamma$, and some $\epsilon \in K$ with $\epsilon \prec 1$. We have $\ln r \in \R \subseteq K$ and, since $K$ is closed under sums, we have
\[
\log \epsilon\ =\ \sum_{k=1}^{\infty } \frac{ (-1)^{k-1} \epsilon^{k}}{k}\in K.
\]
Thus, $\log x \in K+ \log(\omega^{\Gamma})\subseteq \big(K \cup \log(\omega^{\Gamma})\big)^{\rc}$.

We will now show that $ \big(K \cup \log(\omega^{\Gamma})\big)^{\rc}$ is initial by induction on the simplicity of $\gamma \in \Gamma$. If $\rho_{\No}(\gamma) = 0$, then $\omega^\gamma = 1$ and so $K \cup \{\log(1)\} = K$ is initial by assumption. Fix $\gamma \in \Gamma$ with $\rho_{\No}(\gamma) = \alpha>0$ and suppose that
\[
K_\alpha\ :=\ \Big(K\cup \big\{\log \omega^\delta :\delta \in \Gamma,\ \rho_{\No}(\delta)<\alpha\big\}\Big)^{\rc}
\]
is initial. Using that $K$ is cross sectional and that $\Gamma$ is divisible, we see that $\omega^{\gamma^L}$, $\omega^{\gamma^R}$, $\omega^\frac{\gamma^R-\gamma}{n}$, and $\omega^\frac{\gamma-\gamma^L}{n}$ are all in $K\subseteq K_\alpha$. Since $\log \omega^{\gamma^L}$, $\log \omega^{\gamma^R}$ are also in $K_\alpha$, we have $\log \omega^\gamma$ is the simplest element in $\No$ realizing a cut in $K_\alpha$. By Lemma~\ref{Ehrlich initial 1},
\[
K_{\alpha+1}\ =\ \Big(K_\alpha \cup \big\{\log \omega^\gamma:\gamma \in \Gamma,\ \rho_{\No}(\gamma)=\alpha\big\}\Big)^{\rc}
\]
is initial. By taking the union of the $K_{\alpha+1}$ over $\alpha \in \{\rho_{\No}(\gamma):\gamma \in \Gamma\}$, we deduce that $\big(K \cup \log(\omega^\Gamma)\big)^{\rc}$ is initial.
\end{proof}

The following definitions are due to Berarducci and Mantova~\cite{BM2}:

\begin{definition}\
\begin{enumerate}[(i)]
\item $\R\langle\langle\omega\rangle \rangle$ is the smallest subfield of $\No$ containing $\R(\omega)$ and closed under $\exp$, $\log$, and sums.
\item $\R((\omega))^{LE}$ is the union $\bigcup_{n} X_n$ where $X_0 = \R(\omega)$ and 
\[
X_{n+1}\ =\ \big(X_n \cup \exp (X_n) \cup \log(X_n^{>0})\big)^\Sigma.
\]
\item $\R((\omega))^{EL}$ is the union $\bigcup_{n} Y_n$ where $Y_0 = \R\big(\omega,\log(\omega),\log_2(\omega),\ldots\big)$ and 
\[
Y_{n+1}\ =\ \big(Y_n \cup \exp (Y_n) \cup \log(Y_n^{>0})\big)^\Sigma.
\] 
\end{enumerate}
\end{definition}

\begin{theorem}
The fields $\R\langle\langle\omega\rangle \rangle$, $\R((\omega))^{LE}$, and $\R((\omega))^{EL}$ are all initial.
\end{theorem}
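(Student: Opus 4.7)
The plan is to prove, for each of the three fields $F \in \{\R\langle\langle\omega\rangle \rangle, \R((\omega))^{LE}, \R((\omega))^{EL}\}$, that $F$ can be exhibited as a directed union of initial real closed subfields of $\No$. Since a directed union of initial substructures is initial, this suffices. The four workhorses are Lemma~\ref{sumclosure}, Lemma~\ref{Ehrlich initial 1}, Lemma~\ref{expstep} and Lemma~\ref{logstep}: together they show that each of the operations $(-)^{\Sigma}$, $(-\cup \exp(-))^{\rc}$ and $(-\cup \log((-)^{>0}))^{\rc}$ sends an initial real closed subfield (containing $\R$ and closed under sums, in the last case) to an initial real closed subfield.

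For the base cases, I would first use Lemma~\ref{Ehrlich initial 1} to show that $\R(\omega)^{\rc}$ is initial: $\R$ is an initial real closed subfield by Proposition~\ref{P:Cb}, and $\omega$ is the simplest element realizing a cut in $\R$. For $\R((\omega))^{EL}$, I would extend this by iteratively adjoining $\log_{n+1}\omega$ to $\R(\omega,\log\omega,\ldots,\log_n\omega)^{\rc}$; at each stage, Gonshor's inductive formula for $\log \omega^{y}$ (recorded just after Proposition~\ref{Gonshor 1986.1}) together with the fact that the previously-constructed field is initial and real closed lets me conclude that $\log_{n+1}\omega$ is the simplest element of $\No$ realizing its cut over that field, so Lemma~\ref{Ehrlich initial 1} applies and $\R(\omega,\log\omega,\log_2\omega,\ldots)^{\rc}$ is initial by taking the union.

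For the inductive step, starting from any initial real closed subfield $K \supseteq \R$, I would first pass to $K^{\Sigma}$ (initial by Lemma~\ref{sumclosure}), then form $A := (K^{\Sigma}\cup \exp(K^{\Sigma}))^{\rc}$ (initial by Lemma~\ref{expstep}), then $A^{\Sigma}$, and finally $(A^{\Sigma}\cup \log((A^{\Sigma})^{>0}))^{\rc}$ (initial by Lemma~\ref{logstep}, whose sum-closure hypothesis is now satisfied). Iterating this triple operation $\omega$ times and taking unions at each stage produces an increasing chain of initial real closed subfields whose union $F_\infty$ contains $\R(\omega)$ (resp.\ $\R(\omega,\log\omega,\log_2\omega,\ldots)$) and is closed under $\exp$, $\log$ and sums. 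By minimality of $\R\langle\langle\omega\rangle \rangle$ one obtains $\R\langle\langle\omega\rangle \rangle \subseteq F_\infty$, and a direct check that $F_\infty$ can be laid out to match the Berarducci--Mantova construction gives equality; for $\R((\omega))^{LE}$ and $\R((\omega))^{EL}$, I would match the $X_n$ (resp.\ $Y_n$) inductively to a suitable cofinal subsequence of the $F_\alpha$, using that each $X_n$, $Y_n$ is real closed (a known consequence of the BM analysis and Hahn-field arguments).

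The main obstacle I expect is the fine bookkeeping at two points. First, the base case for $\R((\omega))^{EL}$ requires verifying cut-by-cut that $\log_{n+1}\omega$ is truly the simplest realizer of its cut over $\R(\omega,\log\omega,\ldots,\log_n\omega)^{\rc}$; this is an explicit calculation with Gonshor's formula but has to be done carefully because the simpler elements of $\log_{n+1}\omega$ in $\No$ must all already lie in the previously-built initial field. Second, at the BM fields we must match the iterated closures of Lemmas~\ref{sumclosure}--\ref{logstep} to the one-shot $\Sigma$-closure in the BM definition of $X_{n+1}$ and $Y_{n+1}$; the point here is that $X_{n+1}$ need not visibly be real closed from its definition, so one uses that $X_{n+1}$ sits inside a Hahn field $\R_{K}((\omega^{\Gamma}))$ with divisible $\Gamma$ (after the $\Sigma$-closure) to ensure real closedness and thereby line up $X_{n+1}$ with the corresponding $F_\alpha$.
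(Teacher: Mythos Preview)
Your overall strategy is the same as the paper's: iterate Lemmas~\ref{sumclosure}--\ref{logstep} to build an increasing chain of initial real closed subfields whose union is the desired field. The paper, however, dispatches both of your anticipated obstacles by a simple sandwich argument rather than the direct verifications you propose. For $\R((\omega))^{LE}$ it does not try to show each $X_n$ is initial; instead it defines $K_0 = (\R\cup\{\omega\})^{\rc,\Sigma}$ and $K_{n+1} = \big(K_n\cup\exp(K_n)\cup\log(K_n^{>0})\big)^{\rc,\Sigma}$, notes that each $K_n$ is initial by the four lemmas, and observes that $\bigcup_n K_n = \bigcup_n X_n$ because $\R((\omega))^{LE}$ is real closed. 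For $\R((\omega))^{EL}$ it likewise avoids your cut-by-cut check that $\log_{n+1}\omega$ is simplest over $\R(\omega,\ldots,\log_n\omega)^{\rc}$: it builds $L_{0,0}=(\R\cup\{\omega\})^{\rc,\Sigma}$ and $L_{0,m+1}=\big(L_{0,m}\cup\log(L_{0,m}^{>0})\big)^{\rc,\Sigma}$, so each stage is sum-closed and Lemma~\ref{logstep} applies directly; then $Y_0\subseteq L_0:=\bigcup_m L_{0,m}\subseteq \R((\omega))^{EL}$, and one proceeds as in the $LE$ case. Your proposed verification for $\log_{n+1}\omega$ is plausible but delicate---the Gonshor options of $\log(\omega^{y})$ involve $\log(\omega^{y'})$ for \emph{all} tree-predecessors $y'$ of $y$, and one must check these all lie in the (not log-closed) field $\R(\omega,\ldots,\log_n\omega)^{\rc}$---so the paper's route is both shorter and safer.
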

\begin{proof}
Lemmas~\ref{sumclosure}--\ref{logstep} show that $\R\langle\langle\omega\rangle \rangle$ is initial. 
Since $\R((\omega))^{LE}$ is real closed, it is also equal to the union $\bigcup_{n} K_n$ where 
\[
K_0\ :=\ \big(\R \cup \{\omega\}\big)^{\rc,\Sigma},\qquad K_{n+1} \ :=\ \big(K_n \cup \exp(K_n) \cup \log(K_n^{>0})\big)^{\rc,\Sigma}.
\]
Lemmas~\ref{sumclosure}--\ref{logstep} likewise show that this is an initial subfield of $\No$. As for $\R((\omega))^{EL}$, we first build a subfield $L_0$ by setting
\[
L_{0,0}\ :=\ \big(\R \cup \{\omega\}\big)^{\rc,\Sigma},\qquad L_{0,m+1}\ :=\ \big(L_m \cup \log(L_m^{>0})\big)^{\rc,\Sigma},\qquad L_0\ :=\ \Big(\bigcup_m L_{0,m+1}\Big)^{\rc,\Sigma}.
\]
Note that $\R\big(\omega,\log(\omega),\log_2(\omega),\ldots\big)\subseteq L_0 \subseteq\R((\omega))^{EL}$. Now, we repeat the same process above: that is, we set
\[
L_{n+1}\ :=\ \big(L_n \cup \exp(L_n) \cup \log(L_n^{>0})\big)^{\rc,\Sigma}
\]
and observe that $\R((\omega))^{EL} = \bigcup_nL_n$.
\end{proof}

\begin{corollary}
$\R\langle\langle\omega\rangle \rangle$, $\R((\omega))^{LE}$, and $\R((\omega))^{EL}$ are all models of $T_{\operatorname{trig},\exp}$. Thus, by Corollary~\ref{canonical2}, these fields all admit a canonical exponential function on their algebraic closures.
\end{corollary}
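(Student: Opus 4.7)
The proof plan for this corollary is essentially a two-stage assembly: verify the fields satisfy $T_{\operatorname{trig},\exp}$, then invoke Corollary~\ref{canonical2} against the initiality established in the preceding theorem. The substantive observation to record is that each of $\R\langle\langle\omega\rangle \rangle$, $\R((\omega))^{LE}$, and $\R((\omega))^{EL}$ is already known to be a model of $T(\R\an, e^{x})$: each contains $\R(\omega)$, is closed under $\exp$, $\log$, and (as built into the construction) infinite sums in normal form, and hence, via the Taylor expansions that define any restricted analytic function on each Archimedean type of input, is closed under the interpretations of the functions in $\R\an$ inherited from $(\No, \exp)$ by Proposition~\ref{DE1} (together with Fact~\ref{Noexpansion} for the surreal interpretation of restricted analytic functions). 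This is the content of the citation to~\cite{BM2} and~\cite{KT, K, KS, Avv}.

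Since $\sin\restriction_{[0,2\pi]}$ is $0$-definable in $\R\an$ (indeed, it is obtained from $\sin\restriction_{[-1,1]}$ using the addition formula and the total exponential via $\sin = \Im\circ\exp(i\cdot )$ together with the standard shifts), every model of $T(\R\an, e^x)$ is automatically a model of $T_{\operatorname{trig},\exp}$. Applying this to the three fields above establishes the first assertion of the corollary.

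For the second assertion, the preceding theorem shows that each of $\R\langle\langle\omega\rangle \rangle$, $\R((\omega))^{LE}$, and $\R((\omega))^{EL}$ is an initial subfield of $\No$. Combined with the first assertion, each is an initial trigonometric-exponential subfield of $\No$. Corollary~\ref{canonical2} therefore applies directly: in each case, taking the integer part $Z = \Oz \cap K$ extends $\sin\restriction_{[0,2\pi]}$ and $\cos\restriction_{[0,2\pi]}$ canonically to all of $K$, and this in turn defines a canonical extension of $\exp$ to $K[i]$ via the formula $\exp(a+ib) = \exp(a)(\cos b + i\sin b)$.

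There is no real obstacle here; the only thing to verify carefully is the clean implication $T(\R\an, e^x) \vdash T_{\operatorname{trig},\exp}$, which, given that the constant symbols for reals and the total $\exp$ are present in both languages, reduces to noting that $\sin\restriction_{[0,2\pi]}$ is definable from members of $\R\an$ and $\exp$ in the natural way. The bulk of the work—establishing initiality and constructing the canonical surcomplex exponential in the abstract—has already been carried out in Lemmas~\ref{sumclosure}--\ref{logstep} and Corollary~\ref{canonical2} respectively, so the corollary itself is essentially a packaging statement.
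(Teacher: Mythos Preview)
Your proposal is correct and follows essentially the same route as the paper: both arguments establish that the three fields are models of $T(\R\an,e^x)$ (via closure under Taylor-series expansions of restricted analytic functions, which the paper phrases as ``increasing unions of Hahn fields'' and then appeals to Proposition~\ref{qeexp}), observe that this entails $T_{\operatorname{trig},\exp}$, and then combine initiality from the preceding theorem with Corollary~\ref{canonical2}. Your parenthetical about $\sin = \Im\circ\exp(i\cdot)$ is an unnecessary detour---$\sin\restriction_{[0,2\pi]}$ is already a restricted analytic function after an affine reparametrization---but this does not affect the argument.
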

\begin{proof}
$\R\langle\langle\omega\rangle \rangle$, $\R((\omega))^{LE}$, and $\R((\omega))^{EL}$ are all closed under exponentiation by definition. Additionally, $\R\langle\langle\omega\rangle \rangle$, $\R((\omega))^{LE}$, and $\R((\omega))^{EL}$ are all increasing unions of Hahn fields (this is by definition for $\R((\omega))^{LE}$ and $\R((\omega))^{EL}$, and this is the case for $\R\langle\langle\omega\rangle \rangle$ by~\cite[Remark 4.24 and Corollary 4.28]{BM2}). Since each restricted analytic function on $\No$ agrees with its Taylor series expansion, this gives that $\R\langle\langle\omega\rangle \rangle$, $\R((\omega))^{LE}$, and $\R((\omega))^{EL}$ are all closed under restricted analytic functions and so these three fields are elementary $\cL\anexp$-substructures of $\No$ by Proposition~\ref{qeexp}. In particular, they are all models of $T_{\operatorname{trig},\exp}$.
\end{proof}

By~\cite[Theorem 4.11]{BM2}, $\R((\omega))^{LE}$ is the image of the canonical embedding $\imath:\T^{LE} \rightarrow \No$ which sends $x$ to $\omega$ (see~\cite{Avv} for an explicit definition of $\imath$). Thus, we have the following:

\begin{corollary}
The image of the canonical embedding $\imath:\T^{LE} \rightarrow \No$ is initial.
\end{corollary}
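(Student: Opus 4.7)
The plan is essentially a one-line deduction from two results already in hand. The canonical embedding $\imath:\T\to \No$ is described in~\cite{Avv} (and characterized up to the normalization $x \mapsto \omega$); what we need is an identification of its image with something we already know to be initial.

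First, I would cite the Berarducci--Mantova identification~\cite[Theorem 4.11]{BM2}, which says precisely that $\imath(\T) = \R((\omega))^{LE}$. This is the content of ``the canonical embedding sends $x$ to $\omega$'': under that normalization, the transseries field is realized inside $\No$ as exactly the \emph{LE}-series field constructed from $\R$ and $\omega$ by iterating $\exp$, $\log$, and infinite summation.

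Next, I would invoke the theorem established immediately above this corollary, which asserts in particular that $\R((\omega))^{LE}$ is an initial subfield of $\No$. Combining the two facts, the image $\imath(\T) = \R((\omega))^{LE}$ is initial.

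Since both ingredients are quoted results, there is no real obstacle; the only thing worth a sentence of care is that ``initial'' means initial as an ordered subtree of $(\No,<,<_s)$, which is a structural property of the \emph{image} rather than of the embedding itself, so one needs only the set-theoretic equality $\imath(\T)= \R((\omega))^{LE}$ from~\cite[Theorem 4.11]{BM2} and not any finer compatibility with the simplicity order. Hence the proof reduces to invoking the preceding theorem and~\cite[Theorem 4.11]{BM2}, and no further work from \S\ref{sec:IE} or \S\ref{sec:WS} is required.
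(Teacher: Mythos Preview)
Your proposal is correct and matches the paper's approach exactly: the paper deduces the corollary immediately from \cite[Theorem 4.11]{BM2}, which identifies $\imath(\T)$ with $\R((\omega))^{LE}$, together with the preceding theorem establishing that $\R((\omega))^{LE}$ is initial. There is nothing to add.
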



\end{document}